\pgfplotsset{compat=1.18}
\newcommand{\N}{\mathbb{N}}
\newcommand{\Z}{\mathbb{Z}}
\newcommand{\Q}{\mathbb{Q}}
\newcommand{\mB}{\mathcal{B}}
\newcommand{\mG}{\mathcal{G}}
\newcommand{\mH}{\mathcal{H}}
\newcommand{\mA}{\mathcal{A}}
\newcommand{\mS}{\mathcal{S}}
\newcommand{\mE}{\mathcal{E}}
\newcommand{\mF}{\mathcal{F}}
\newcommand{\mX}{\mathcal{X}}
\newcommand{\Pn}[1]{\mathbb{P}^{\, #1}}
\newcommand{\bfs}{\mathbf{s}}
\newcommand{\bx}{\mathbf{x}}
\newcommand{\bft}{\mathbf{t}}
\newcommand{\bfe}{\mathbf{e}}
\newcommand{\bfa}{\mathbf{a}}
\newcommand{\bfb}{\mathbf{b}}
\newcommand{\bfc}{\mathbf{c}}
\newcommand{\bfh}{\mathbf{h}}
\newcommand{\bfeps}{\boldsymbol\epsilon}
\newcommand{\hf}[1]{{\rm HF}_{#1}}
\newcommand{\hs}[1]{{\rm HS}_{#1}}
\newcommand{\reg}[1]{{\rm reg}(#1)}
\newcommand{\depth}[1]{{\rm depth}(#1)}
\newcommand{\degs}[1]{| #1 |_\mS}
\newcommand{\degw}{\deg_\omega}
\newcommand{\AP}{{\rm AP}}
\newcommand{\sg}{\mathcal{S}}
\newcommand{\aps}{\AP_\sg}
\renewcommand{\k}{\Bbbk}
\newcommand{\kx}{\Bbbk[x_1,\ldots,x_n]}
\newcommand{\kxd}{\Bbbk[x_{n-d+1},\ldots,x_n]}
\newcommand{\ks}{\Bbbk[\mS]}
\newcommand{\kt}{\Bbbk[t_1,\ldots,t_d]}
\newcommand{\ini}[1]{{\rm in}(#1)}
\newcommand{\id}[1]{\langle #1 \rangle}
\DeclareMathOperator{\lcm}{lcm}
\def\whom{$\omega$-homogeneous}
\def\iff{if and only if}
\theoremstyle{plain}
\newtheorem{theorem}{Theorem}[section]
\newtheorem{lemma}[theorem]{Lemma}
\newtheorem{corollary}[theorem]{Corollary}
\newtheorem{proposition}[theorem]{Proposition}
\theoremstyle{definition}
\newtheorem{definition}[theorem]{Definition}
\newtheorem{example}[theorem]{Example}
\newtheorem{remark}[theorem]{Remark}
\title{Computational aspects of the short resolution}
\author[I. García-Marco]{Ignacio García-Marco\,\orcidlink{0000-0003-4993-7577}}
\address{Instituto de Matem\'aticas y Aplicaciones (IMAULL), Secci\'on de Matem\'aticas, Fa\-cul\-tad de Ciencias, Universidad de La Laguna, 38200, La Laguna, Spain}
\email{iggarcia@ull.edu.es}
\author[P. Gimenez]{Philippe Gimenez\,\orcidlink{0000-0002-5436-9837}}
\address{Instituto de Investigaci\'on en Matem\'aticas de la Universidad de Valladolid (IMUVA), Universidad de Valladolid, 47011 Valladolid, Spain.}
\email{pgimenez@uva.es}
\author[M. Gonz\'alez-S\'anchez]{Mario Gonz\'alez-S\'anchez\,\orcidlink{0000-0002-6458-7547}}
\address{Instituto de Investigaci\'on en Matem\'aticas de la Universidad de Valladolid (IMUVA), Universidad de Valladolid, 47011 Valladolid, Spain.}
\email{mario.gonzalez.sanchez@uva.es}
\thanks{
This work was supported in part by the grant PID2022-137283NB-C22 funded by MICIU/AEI/ 10.13039/501100011033 and by ERDF/EU.
The third author thanks financial support from European Social Fund, {\it Programa Operativo de Castilla y Le\'on}, and {\it Consejer\'ia de Educaci\'on de la Junta de Castilla y Le\'on}.
}
\subjclass[2020]{14Q15, 13D02, 14Q10, 13F65, 20M50}
\keywords{Graded free resolution, Betti numbers, Noether normalization, toric ring, simplicial semigroup}
\begin{document}

\begin{abstract}
Let $R:= \Bbbk[x_1,\ldots,x_{n}]$ be a polynomial ring over a field $\Bbbk$, $I \subset R$ be a homogeneous ideal with respect to a weight vector $\omega = (\omega_1,\ldots,\omega_n) \in (\mathbb{Z}^+)^n$, and denote by $d$ the Krull dimension of $R/I$. In this paper we study graded free resolutions of $R/I$ as $A$-module whenever $A :=\Bbbk[x_{n-d+1},\ldots,x_n]$ is a Noether normalization of $R/I$. We exhibit a Schreyer-like method to compute a (non-necessarily minimal) graded free resolution of $R/I$ as $A$-module. When $R/I$ is a $3$-dimensional simplicial toric ring, we describe how to prune the previous resolution to obtain a minimal one. We finally provide an example of a $6$-dimensional simplicial toric ring whose Betti numbers, both as $R$-module and as $A$-module, depend on the characteristic of $\Bbbk$.
\end{abstract}

\maketitle

\section*{Introduction}
Let $\k$ be an arbitrary field, $R = \kx$ a polynomial ring over $\k$, and $I \subset R$ a $\omega$-homogeneous ideal for some weight vector $\omega = (\omega_1,\ldots,\omega_n) \in (\Z^+)^n$, i.e., $I$ is homogeneous for the grading on $R$ induced by $\degw (x_i) = \omega_i$. 
We denote by $d := \dim(R/I)$ the Krull dimension of $R/I$ and assume that $A := \kxd$ is a {\it Noether normalization} of $R/I$, that is, $A \hookrightarrow R/I$ is an integral ring extension. 
When this occurs, we will say that the variables are in {\it Noether position}.
In this setting, $R/I$ is a finitely generated graded $A$-module, so it has a finite minimal graded free resolution as $A$-module. This resolution has been referred to in the literature as the {\it short resolution} \cite{Ojeda2017,Pison2003} or {\it Noether resolution} \cite{BGGM2017} of $R/I$. 
We denote it by 
\begin{equation} \label{eq:shortresol}
 \mathcal{F}: 0 \rightarrow \oplus_{v\in \mB_p} A(-s_{p,v}) \xrightarrow{\psi_p} \dots \xrightarrow{\psi_1} \oplus_{v\in \mB_0} A(-s_{0,v}) \xrightarrow{\psi_0} R/I \rightarrow 0 \, ,  
\end{equation}
where its length $p={\rm pd}_A(R/I)$ is the projective dimension of $R/I$ as $A$-module, and for all $i\in \{0,\ldots,p\}$, the sets $\mB_i \subset R$ are finite, and $s_{i,v}$ are nonnegative integers.\newline

The relation between the lengths of the short resolution of $R/I$ and of its usual minimal graded free resolution as $R$-module is given by ${\rm pd}_R (R/I) = {\rm pd}_A (R/I) + n-d$. This follows from the Auslander-Buchsbaum formula and the fact that ${\rm depth}_A(R/I) = {\rm depth}_R(R/I)$; see, e.g. \cite[Ex.~1.2.26(b)]{BrunsHerzog1998}. Hence, the short resolution is shorter than the usual minimal graded free resolution, and it contains valuable combinatorial, algebraic and geometric information about $R/I$. For example, since (\ref{eq:shortresol}) is a graded free resolution of $R/I$, one gets that the (weighted) Hilbert series of $R/I$ can be expressed as:
\[\hs{R/I}(t) = \dfrac{\sum_{i=0}^{p} \sum_{v \in \mB_i} (-1)^i \, t^{s_{i,v}}}{(1-t)^d}, \]
and its numerator, $h(t) = \sum_{i=0}^{p} \sum_{v \in \mB_i} (-1)^i \, t^{s_{i,v}}$, satisfies that $h(1) = \sum_{i = 0}^p (-1)^i |\mB_i|$ is $e(R/I)$, the degree (or multiplicity) of $R/I.$ 
Moreover, when $I$ is homogeneous with respect to the standard grading, as a consequence of the Independence Theorem for local cohomology (see, e.g. \cite[Sect.~1]{Symonds2011}), the Castelnuovo-Mumford regularity of $R/I$, $\reg{R/I}$, can be computed using the short resolution:
\[\reg{R/I} = \max \{ s_{i,v}-i \mid 0 \leq i \leq p , v \in \mB_i\}\,.\]

In \cite{BGGM2017}, the authors describe how to compute short resolutions in some cases. The first step of the short resolution is given by \cite[Prop.~1]{BGGM2017} that we recall in Proposition~\ref{prop:first_step}. This result provides the whole short resolution when $R/I$ is Cohen-Macaulay. 
If $R/I$ is not Cohen-Macaulay, the resolution has at least one more step. When $\dim(R/I) = 1$ and $\depth{R/I} = 0$, the second (and last) step of the short resolution is given in \cite[Prop.~3]{BGGM2017}. Moreover, when $\dim(R/I) = 2$ and $x_n$ is not a zero divisor on $R/I$, the whole short resolution is given in \cite[Prop.~4]{BGGM2017}. 
In the first section, we study the short resolution in any dimension, and we also drop the assumption that $x_n$ is a nonzero divisor on $R/I$. We will only assume that $I$ is homogeneous for some grading $\omega \in (\Z^+)^n$, and that $A$ is a Noether normalization. Note that this last assumption is not restrictive if $I$ is homogeneous for the standard grading and $\k$ is infinite since linear changes of coordinates preserve homogeneity for the standard grading, and
$A$ is a Noether normalization of $R/I$ after a generic linear change of coordinates; see 
\cite[Lem.~4.1]{BGi01} for a Noether position test, and \cite[App.~A]{Bermejo2006sat} for smaller changes of coordinates.
\newline

Our main results in the first section are Proposition~\ref{prop:ker_psi0} and Theorem~\ref{them:GBschreyer}. In Proposition~\ref{prop:ker_psi0}, 
using the monomial generators of $R/I$ as $A$-module given in \cite[Prop.~1]{BGGM2017}, we describe a generating set (that may not be minimal) of its module of syzygies, a submodule of a free $A$-module.
This presentation of the $A$-module $R/I$ by generators and relations allows to obtain its 
minimal graded free resolution by means of standard $A$-module computations. This gives the first way of contructing the short resolution of $R/I$ (Algorithm~\ref{alg:short_res}).
Another way to obtain the short resolution is as follows.
In Theorem~\ref{them:GBschreyer}, we prove that the generating set given in Proposition~\ref{prop:ker_psi0} is, indeed, the reduced Gr\"obner basis of the syzygy submodule for a Schreyer-like monomial order, and hence we can build a graded free resolution (that does not need to be minimal) of $R/I$ as $A$-module by an iterative application of Schreyer's theorem. 
\newline

A case in which our results apply nicely is that of toric rings. For a finite set of nonzero vectors $\mA = \{\bfa_1,\ldots,\bfa_n\} \subset \N^d$ with $\bfa_i = (a_{i1},\ldots,a_{id}) \in \N^d$, let $\mS_\mA \subset \N^d$ be the affine monoid spanned by $\mA$,
\[\mS_\mA = \{\lambda_1 \bfa_1 + \dots + \lambda_n \bfa_n \mid \lambda_1,\ldots,\lambda_n \in \N\} \, .\]
The toric ideal determined by $\mA$, $I_\mA$, is the kernel of the homomorphism of $\k$-algebras $\varphi_\mA: \kx \rightarrow \kt$ induced by $x_i \mapsto \bft^{\bfa_i} = t_1^{a_{i1}} t_2^{a_{i2}} \dots t_d^{a_{id}}$. The ideal $I_\mA$ is prime and, if one sets the $\mS_\mA$-degree of a monomial $\bx^{\alpha} \in \kx$ as $|\bx^\alpha|_{\mS_\mA} :=
\alpha_1 \bfa_1 + \cdots + \alpha_n \bfa_n \in \mS_\mA$, it is $\mS_\mA$-homogeneous, i.e., $I_\mA$ is homogeneous for the grading induced by $|\bx^\alpha|_{\mS_\mA}$.
Indeed, $I_{\mA}$ is the binomial ideal given by:
\[ I_{\mA} = \langle \, \bx^\alpha - \bx^\beta \ \vert \  |\bx^\alpha|_{\mS_\mA} = |\bx^\beta|_{\mS_\mA} \, \rangle. \]
The toric ring $\k[x_1,\ldots,x_n] / I_\mA$ and the semigroup algebra $\k[\mS_\mA] := \k[t^{\bf s} \, \vert \, {\bf s} \in \mS_\mA]$ are isomorphic as $\mS_\mA$-graded algebras. We say that the semigroup $\mS_\mA$ is simplicial when the rational cone spanned by $\mA$, i.e., ${\rm Cone}(\mA) := \{\sum_{i = 1}^n \lambda_i \bfa_i \, \vert \, \lambda_i \in \mathbb R_{\geq 0}\} \subset \mathbb R^d$, has dimension $d$ and is minimally generated by $d$ rays. 
One has that $A$ is a Noether nomalization of $R/I_\mA$ if and only if ${\rm Cone}(\mA)$ is spanned by $\bfa_{n-d+1},\ldots,\bfa_n$; when this happens, $\mS_\mA$ is a simplicial semigroup and we say that $R/I_\mA$ is a simplicial toric ring. 
Short resolutions of simplicial toric rings were first studied in \cite{Pison2003} and later in \cite{Ojeda2017} under the name of Pison's resolution. Notably, one can find in these papers an algorithm involving Gr\"obner basis for computing a presentation of $\k[\mS_\mA]$, and Hochster-like formulas for the Betti numbers of these resolutions in terms of some simplicial homology. 
In Sections~\ref{sec:homology} and~\ref{sec:pruning}, 
we study three dimensional simplicial toric rings and their short resolution. In Section~\ref{sec:homology}, we describe their short resolution and their Hilbert series and function in terms of the combinatorics of the associated semigroup translating some results of \cite{Pison2003} and \cite{Ojeda2017}. When $I_\mA$ is standard-graded homogeneous, we provide formulas for the Castelnuovo-Mumford regularity of the toric ring.
In Section~\ref{sec:pruning}, we devise an algorithm to compute the short resolution for 3-dimensional simplicial toric rings. This algorithm first constructs a non-minimal graded free resolution as $A$-module following Section~\ref{sec:groebner} (Algorithm~\ref{alg:dim3}), and then minimalizes/prunes it to obtain the short one by applying Theorems~\ref{thm:prunB1} and~\ref{thm:prunB2} (Algorithm~\ref{alg:pruning}). The whole algorithm involves the computation of the reduced Gr\"obner bases of $I_\mA$ and $I_\mA + \langle x_{n-2} \rangle$, and the division of some monomials by those bases.   \newline

Several authors have addressed the problem of determining which algebraic invariants of a semigroup algebra depend on the field $\k$. For general toric ideals, it is known that binomial generating sets and reduced Gr\"obner bases of $I_{\mA}$, and also several Betti numbers of $\k[\mS_\mA]$, are independent of $\k$; see, e.g., \cite{Sturm} and \cite[Thm.~1.3]{BrunsHerzog1997}. Nevertheless, the Gorenstein, Cohen-Macaulay and Buchsbaum properties of $\k[\mS_\mA]$ depend on the characteristic of $\k$ in general; see \cite{Hoa1991}, \cite{TH1986} and \cite{Hoa1988}, respectively. This situation changes in the context of simplicial semigroup rings since the Gorenstein, Cohen-Macaulay, and Buchsbaum properties can be completely described in terms of the combinatorics of $\mS_\mA$, and hence do not depend on $\k$; see \cite{GSW1976}, \cite{Stanley1978} and \cite{GarciaSanchezRosales02}, respectively. These facts could lead to conjecture that the Betti numbers of simplicial semigroup algebras do not depend on $\k$ but this does not hold. In the last section, we provide an example of a simplicial semigroup whose algebra has different projective dimensions, both as $A$-module and as $R$-module, depending on the characteristic of $\k$.
To our knowledge, this is the first example in which this phenomenon is observed. \newline

The computations in the examples given in this paper have been performed using SageMath \cite{Sage}. Our algorithms have been implemented in SageMath and are available in the GitHub repository \cite{github_shortres}.

\section{Contruction of the short resolution using Gröbner bases} \label{sec:groebner}

Let $\omega = (\omega_1,\ldots,\omega_n) \in (\Z^+)^n$ be a weight vector,  $\Bbbk$ an arbitrary field and $R = \kx$. Consider $I \subset R$ a \whom{} ideal, i.e., a homogeneous ideal with respect to the grading induced by $\deg_\omega(x_i) = \omega_i$ for all $i \in \{1,\ldots,n\}$. Take $d := \dim(R/I)$ and assume that $A = \kxd$ is a Noether normalization of $R/I$. In this section we study the {\it short resolution} of $R/I$, i.e., the minimal graded free resolution of $R/I$ as $A$-module:
\begin{equation}
\mathcal{F}: 0 \rightarrow \oplus_{v\in \mB_p} A(-s_{p,v}) \xrightarrow{\psi_p} \dots \xrightarrow{\psi_1} \oplus_{v\in \mB_0} A(-s_{0,v}) \xrightarrow{\psi_0} R/I \rightarrow 0 \, ,
\label{eq:NoetherRes}
\end{equation}
where $p = {\rm pd}_A(R/I),$ and for all $i\in \{0,\ldots,p\}$, $\mB_i \subset R$ is a finite set and $s_{i,v}$ are nonnegative integers. In our description, the sets $\mB_i$ will consist of monomials and $s_{i,v} = \deg_\omega(v)$ will be the $\omega$-degree of the monomial $v\in \mB_i$. Note that the sets $\mB_i$ might not be unique, but their degrees are.
\newline

Consider the $\omega$-graded reverse lexicographic order $>_\omega$ in $R$, i.e., the monomial order defined as follows: $\bx^\alpha >_\omega \bx^\beta$ \iff{} \begin{itemize}
    \item $\degw(\bx^\alpha) > \degw(\bx^\beta)$, or
    \item $\degw(\bx^\alpha) = \degw(\bx^\beta)$ and the last nonzero entry of $\alpha-\beta \in \Z^n$ is negative.
\end{itemize}
For every polynomial $f\in R$, let $\ini{f}$ denote the initial term of $f$ with respect to $>_\omega$ (we include the coefficient in the initial term). Given an ideal $J\subset R$, $\ini{J}$ denotes the initial ideal of $J$ with respect to $>_\omega$, and $\mG$ the reduced Gröbner basis of $I$ with respect to $>_\omega$. Since $I$ is $\omega$-homogeneous, $\mathcal G$ consists of $\omega$-homogeneous polynomials. \newline

With these notations, the first step of the short resolution of $R/I$ is given by the following result:

\begin{proposition}[{\cite[Prop.~1]{BGGM2017}}] \label{prop:first_step}
Let $\mB_0 \subset R$ be the set of monomials that do not belong to $\ini{I} + \id{x_{n-d+1},\ldots,x_n}$. Then, \[ \{u+I \mid u \in \mB_0\}\]  is a minimal set of generators of $R/I$ as $A$-module.
The $\omega$-graded $A$-module homomorphism $\psi_0: \oplus_{v\in \mB_0} A \left( -\deg_\omega(v) \right) \rightarrow R/I$ is defined by $\psi_0(\bfeps_u) = u+I$, where $\{\bfeps_u \mid u \in \mB_0\}$ denotes the canonical basis of $\oplus_{v\in \mB_0} A \left( -\deg_\omega(v) \right)$, and hence the shifts at the first step of the short resolution \eqref{eq:NoetherRes} are the $\omega$-degrees of the elements $u \in \mB_0$.
\end{proposition}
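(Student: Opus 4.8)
The plan is to reduce the statement to two ingredients: a Gröbner-basis identity that is special to the order $>_\omega$, and then Macaulay's basis theorem combined with the graded Nakayama lemma.

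The first thing I would prove is a \emph{key lemma}: if $g\in\mG$ and $x_j\mid\ini{g}$ for some $j$ with $n-d+1\le j\le n$, then every monomial occurring in $g$ lies in $\id{x_{n-d+1},\ldots,x_n}$. This is where the choice of $>_\omega$ matters. Since $I$ is $\omega$-homogeneous, so is $g$, hence any two of its monomials $\bx^{\alpha}$ and $\bx^{\beta}$ have the same $\omega$-degree and are therefore compared by the reverse lexicographic tie-break. Writing $\bx^{\alpha}=\ini{g}$ and letting $\bx^{\beta}$ be another monomial of $g$, the last nonzero entry of $\alpha-\beta$ is negative, say in position $\ell$. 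Then either $\ell\le j$, which forces $\beta_j\ge\alpha_j\ge 1$, or $\ell>j\ge n-d+1$, which forces $\beta_\ell\ge\alpha_\ell+1\ge 1$; in both cases $\bx^{\beta}$ is divisible by one of $x_{n-d+1},\ldots,x_n$, and since $\bx^{\alpha}$ is divisible by $x_j$ the lemma follows.

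Granting the lemma, I would check that $\mG\cup\{x_{n-d+1},\ldots,x_n\}$ is a Gröbner basis of $\mathfrak{a}:=I+\id{x_{n-d+1},\ldots,x_n}$ with respect to $>_\omega$, via Buchberger's criterion. The $S$-polynomials among elements of $\mG$ reduce to zero because $\mG$ is a Gröbner basis of $I$; those among the $x_j$'s, and those $S(g,x_j)$ with $x_j\nmid\ini{g}$, reduce to zero by the coprimality criterion; and when $x_j\mid\ini{g}$ one has $S(g,x_j)=g-\ini{g}$, whose monomials are, by the lemma, all divisible by some $x_k$ with $k\ge n-d+1$, so it reduces to zero using $\{x_{n-d+1},\ldots,x_n\}$ alone. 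Therefore $\ini{\mathfrak{a}}=\ini{I}+\id{x_{n-d+1},\ldots,x_n}$, so $\mB_0$ is precisely the set of standard monomials of $\mathfrak{a}$, and by Macaulay's theorem $\{u+\mathfrak{a}\mid u\in\mB_0\}$ is a $\k$-basis of $R/\mathfrak{a}$.

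Finally I would invoke graded Nakayama. Since $A$ is a Noether normalization, $R/I$ is a finitely generated graded $A$-module; as all $\omega_i$ are positive, $\mathfrak{m}_A:=\id{x_{n-d+1},\ldots,x_n}$ is the graded maximal ideal of $A$, $\mathfrak{m}_A\,(R/I)=\mathfrak{a}/I$, and hence $(R/I)/\mathfrak{m}_A(R/I)\cong R/\mathfrak{a}$. By the graded Nakayama lemma, a set of $\omega$-homogeneous elements of $R/I$ is a minimal system of generators of the $A$-module $R/I$ if and only if its image in $R/\mathfrak{a}$ is a $\k$-basis; applied to $\{u+I\mid u\in\mB_0\}$, whose image is the basis obtained above, this proves the first assertion. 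Each $u\in\mB_0$ is a monomial and $I$ is $\omega$-homogeneous, so $u+I$ is $\omega$-homogeneous of degree $\deg_\omega(u)$, whence $\psi_0$ is a well-defined graded surjection and the shifts at the first step are the $\deg_\omega(u)$, $u\in\mB_0$. The one genuinely delicate point is the key lemma --- the interplay of Noether position, $\omega$-homogeneity and the reverse lexicographic tie-break; the rest is standard Gröbner-basis bookkeeping and graded commutative algebra.
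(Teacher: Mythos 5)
Your proof is correct. The paper itself does not prove this statement---it cites \cite[Prop.~1]{BGGM2017}---so there is no in-text proof to compare against, but your argument is a complete and standard one and is in the spirit of the cited source. The crucial point is precisely your key lemma: for the $\omega$-graded reverse lexicographic order, if an element of the reduced Gr\"obner basis has initial term divisible by some $x_j$ with $j\ge n-d+1$, then every monomial of that element is divisible by some variable in $\{x_{n-d+1},\ldots,x_n\}$. This is the weighted analogue of the well-known revlex ``elimination'' property, and your case analysis on the position $\ell$ of the last nonzero entry of $\alpha-\beta$ is correct (when $\ell\le j$ you get $\beta_j\ge\alpha_j\ge1$, when $\ell>j\ge n-d+1$ you get $\beta_\ell\ge1$). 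From there, the Buchberger check that $\mG\cup\{x_{n-d+1},\ldots,x_n\}$ is a Gr\"obner basis of $I+\id{x_{n-d+1},\ldots,x_n}$ is straightforward (coprimality criterion handles all cases except $x_j\mid\ini{g}$, and there the key lemma makes $g-\ini{g}$ reduce to zero against the variables alone), giving $\ini{I+\id{x_{n-d+1},\ldots,x_n}}=\ini{I}+\id{x_{n-d+1},\ldots,x_n}$. Macaulay's basis theorem then identifies $\mB_0$ with a $\k$-basis of $R/(I+\id{x_{n-d+1},\ldots,x_n})=(R/I)/\mathfrak m_A(R/I)$, and the graded Nakayama lemma (valid here since $\omega_j>0$ for all $j$, so $\mathfrak m_A$ is the graded maximal ideal of $A$, and $R/I$ is a finitely generated graded $A$-module by Noether position) upgrades this to a minimal homogeneous generating set, which in turn pins down the shifts as the $\omega$-degrees of the $u\in\mB_0$. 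No gaps.
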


This result provides the whole short resolution when $R/I$ is a free $A$-module, i.e., when the projective dimension of $R/I$ as $A$-module is $0$, which is equivalent to $R/I$ being Cohen-Macaulay. In Gr\"obner basis terms, this is also equivalent to the fact that variables $x_{n-d+1},\ldots,x_n$ do not divide any minimal generator of $\ini I$; see, e.g., \cite[Thm.~2.1]{BGi01} or \cite[Prop.~2]{BGGM2017}. 
\newline

When $R/I$ is not free, the resolution has at least one more step. In this case, we will describe the relations between the generators of $R/I$ given in Proposition~\ref{prop:first_step}, i.e., provide a finite set of generators $\mH$ of $\ker(\psi_0)$ that may not be minimal. This gives a presentation of $R/I$ as $A$-module: $R/I$ is isomorphic to the quotient of a free $A$-module by the submodule generated by $\mH$, and the short resolution of $R/I$ can then be obtained by standard $A$-module computations. \newline

Let $\chi: R\rightarrow R$ be the evaluation morphism defined by $\chi(x_i) = x_i$ for $i \in \{1,\ldots,n-d\}$ and $\chi(x_j) = 1$ for $j \in \{n-d+1, \ldots,n\}$, and set $J := \chi\left( \ini{I} \right) . R$, the extension of the ideal $\ini{I}$ by the ring homomorphism $\chi$. Now, for every monomial $u \in \mB_0 \cap J$, consider the ideal $I_u$ defined by 
\[I_u := \left( \ini{I} : u \right) \cap \kxd  \, .\] 
Since $I_u$ is a monomial ideal, it has a unique minimal monomial generating set denoted by $G(I_u)$, and let $\mB_1'$ be the following set of monomials: 
\begin{equation}\label{eq:Bprime1}
\mB_1' = \{u \cdot M \mid u \in \mB_0 \cap J , M \in G(I_u) \}. 
\end{equation} 
Each monomial $\bx^\alpha \in \mB_1'$ can be written uniquely as $\bx^\alpha = u \cdot M_\alpha$, where $u = \chi(\bx^\alpha) \in \mB_0 \cap J$ and $M_\alpha \in G(I_u)$.
Let $r_\alpha$ be the remainder of the division of $\bx^\alpha$ by $\mG$, the reduced Gröbner basis of $I$ with respect to $>_\omega$. Since every monomial  in the expression of $r_\alpha$ does not belong to $\ini{I}$, one can 
uniquely write  $r_\alpha = \sum_{v\in \mB_0} f_{\alpha,v} v$ with $f_{\alpha,v} \in A$.
Using these notations, for all $\bx^\alpha \in \mB_1'$ set
\begin{equation}\label{eq:h}
\bfh_\alpha := M_\alpha \cdot \bfeps_u - \sum_{v \in \mB_0} f_{\alpha,v} \cdot \bfeps_v \in \oplus_{v \in \mB_0} A\left( -\deg_\omega(v) \right) ,
\end{equation}
where $\{\bfeps_v \mid v\in \mB_0\}$ denotes the canonical basis of $\oplus_{v \in \mB_0} A\left( -\deg_\omega(v) \right)$.
\newline

Since $\psi_0 (\bfh_\alpha) = \left( \bx^\alpha-r_\alpha \right) +I = 0$ for all $\bx^\alpha \in \mB_1'$, one has that $\langle \bfh_\alpha \mid \bx^\alpha \in \mB_1' \rangle \subset \ker(\psi_0)$. The next result shows that this inclusion is indeed an equality.

\begin{proposition} \label{prop:ker_psi0}
The kernel of the $A$-module homomorphism $\psi_0$ is \[\ker(\psi_0) = \langle \bfh_\alpha \mid \bx^\alpha \in \mB_1' \rangle \, .\]
\end{proposition}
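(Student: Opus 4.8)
The plan is to establish the nontrivial inclusion $\ker(\psi_0)\subseteq\langle\bfh_\alpha\mid\bx^\alpha\in\mB_1'\rangle$ by a Gr\"obner-style reduction of an arbitrary syzygy against the proposed generators, using that $>_\omega$ well-orders the monomials of $R$ (the reverse inclusion was already noted before the statement). The key bookkeeping fact is this: every $v\in\mB_0$ is a monomial in $x_1,\ldots,x_{n-d}$ alone (since $v\notin\id{x_{n-d+1},\ldots,x_n}$), while every element of $A$ is a polynomial in $x_{n-d+1},\ldots,x_n$; hence, when a combination $h=\sum_{v\in\mB_0}g_v v$ with $g_v\in A$ is expanded into monomials, the products $\mu\cdot v$ ($\mu$ a monomial occurring in $g_v$, $v\in\mB_0$) are pairwise distinct. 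Consequently the graded $A$-linear map $\Phi\colon\bigoplus_{v\in\mB_0}A(-\deg_\omega(v))\to R$ sending $\bfeps_v\mapsto v$ is \emph{injective}, it satisfies $\psi_0=\pi\circ\Phi$ with $\pi\colon R\to R/I$ the quotient map (so $\ker(\psi_0)=\Phi^{-1}(I)$), and $\Phi(\bfh_\alpha)=\bx^\alpha-r_\alpha\in I$ for every $\bx^\alpha\in\mB_1'$. Moreover, for any nonzero $h$ of the above form, $\ini{h}=c\,M_0v_0$ where $v_0\in\mB_0$ and the monomial $M_0\in A$ are uniquely determined ($c\in\k^{*}$), again by that distinctness. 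It therefore suffices to reduce an arbitrary $0\neq\mathbf{g}\in\ker(\psi_0)$ to $0$ by repeatedly subtracting $A$-multiples of the $\bfh_\alpha$.

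For one reduction step, take $0\neq\mathbf{g}\in\ker(\psi_0)$, put $h:=\Phi(\mathbf{g})$, a nonzero element of $I$ by injectivity of $\Phi$, and write $\ini{h}=c\,M_0v_0$ as above. Since $h\in I$ we have $M_0v_0=\ini{h}\in\ini{I}$; applying $\chi$ and using $\chi(M_0)=1$ gives $v_0=\chi(M_0v_0)\in\chi(\ini{I})\subseteq J$, so $v_0\in\mB_0\cap J$, the ideal $I_{v_0}$ is defined, and $M_0\in(\ini{I}:v_0)\cap A=I_{v_0}$. Hence $M_0=M\cdot M'$ for some $M\in G(I_{v_0})$ and some monomial $M'\in A$, and $\bx^\alpha:=v_0M$ lies in $\mB_1'$ by \eqref{eq:Bprime1}. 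I would then check that $\ini{\bx^\alpha-r_\alpha}=\bx^\alpha$: indeed $\bx^\alpha=v_0M\in\ini{I}$ because $M\in I_{v_0}$, whereas no monomial of the remainder $r_\alpha$ lies in $\ini{I}$, so $\bx^\alpha$ is the only monomial of $\bx^\alpha-r_\alpha$ in $\ini{I}$ and hence, as $\bx^\alpha-r_\alpha\in I$, its initial one. Consequently the element $\mathbf{g}':=\mathbf{g}-c\,M'\bfh_\alpha$ lies in $\ker(\psi_0)$ and satisfies $\Phi(\mathbf{g}')=h-c\,M'(\bx^\alpha-r_\alpha)$; since $\ini{c\,M'(\bx^\alpha-r_\alpha)}=c\,M'\bx^\alpha=c\,M_0v_0=\ini{h}$, either $\Phi(\mathbf{g}')=0$, whence $\mathbf{g}'=0$, or $\ini{\Phi(\mathbf{g}')}<_\omega\ini{h}$.

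To conclude, I would iterate this step. Because $>_\omega$ well-orders the monomials of $R$, the strictly decreasing chain of initial monomials produced along the way must terminate, which by injectivity of $\Phi$ can only occur once the current syzygy equals $0$; collecting the subtracted terms $c\,M'\bfh_\alpha$ then expresses $\mathbf{g}$ as an $A$-linear combination of the $\bfh_\alpha$ with $\bx^\alpha\in\mB_1'$, which is the claimed inclusion. (One may first decompose $\mathbf{g}$ into its $\omega$-homogeneous components, but this is not necessary.) The step I expect to be the main obstacle is the bookkeeping of the first paragraph together with the identification in the second: that $\ini{h}$ genuinely recovers a well-defined pair $(v_0,M_0)$ with $M_0$ a monomial of $A$, and that the ``reducer'' $c\,M'(\bx^\alpha-r_\alpha)$ being subtracted is honestly $\Phi(c\,M'\bfh_\alpha)$ for a genuine $\bfh_\alpha$ indexed by $\mB_1'$ — this is exactly where the definitions of $\chi$, $J$, $I_u$, $G(I_u)$ and $\mB_1'$ enter. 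The descent itself is then routine.
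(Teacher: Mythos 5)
Your proof is correct and follows essentially the same strategy as the paper's: introduce the auxiliary map $\bar\psi_0$ (your $\Phi$) lifting $\psi_0$ to $R$, read off the initial term $c\,M_0v_0$ of the image of a syzygy, identify a suitable $\bfh_\alpha$ with $\bx^\alpha\in\mB_1'$ dividing it, subtract, and descend via the well-ordering of $>_\omega$. The only difference is that you spell out a few steps the paper leaves implicit (injectivity of $\Phi$, the $\chi$-argument placing $v_0$ in $\mB_0\cap J$, and the identification $\ini{\bx^\alpha-r_\alpha}=\bx^\alpha$), which makes the write-up slightly more detailed but not structurally different.
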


\begin{proof}
Consider $\bar{\psi}_0: \oplus_{v\in \mB_0} A\left( -\deg_w(v) \right) \rightarrow R$ the $A$-module homomorphism defined by $\bar{\psi}_0(\bfeps_v) = v$, for all $v\in \mB_0$. Take $g \in \ker(\psi_0)$, and let us prove that $g \in \langle \bfh_\alpha \mid \bx^\alpha \in \mB_1' \rangle$. We write $g = \sum_{v\in \mB_0} g_v \bfeps_v$ with $g_v \in A$ for all $v \in \mB_0$. Since $g \in \ker(\psi_0)$, then $g' = \bar{\psi}_0(g) = \sum_{v \in \mB_0} g_v \cdot v \in I$ and its initial term is $\ini{g'} = c \cdot w \cdot M_\gamma$ for some $c\in \Bbbk \setminus \{0\}$, $w\in \mB_0$ and a monomial $M_\gamma \in A$. In fact, $w \in \mB_0 \cap J$ and $M_\gamma \in I_w = \left( \ini{I} : w \right) \cap A$. Hence, there exists $\bx^\alpha = M_\alpha w \in \mB_1'$ such that $M_\alpha$ divides $M_\gamma$. Let us consider $g_1 = g-c \frac{M_\gamma}{M_\alpha} \bfh_\alpha \in \ker(\psi_0)$. If $g_1=0$, then $g\in \langle \bfh_\alpha \mid \bx^\alpha \in \mB_1'\rangle$. Otherwise, one has that $0 \neq \ini{\bar{\psi}_0(g_1)} < \ini{\bar{\psi}_0 (g)}$ and we iterate this process. The result then follows by induction because $>_\omega$ is a well order.
\end{proof}

Proposition~\ref{prop:ker_psi0} provides a system of generators of $\ker(\psi_0)$. As a consequence, we get the next step of a non-necessarily minimal graded free resolution of $R/I$ as $A$-module.

\begin{corollary}\label{cor:segundopaso}
Consider the morphism of $A$-modules 
\[\begin{split}
\psi_1': \oplus_{\bx^\alpha \in \mB_1'} A \left( -\degw(\bx^\alpha) \right) &\rightarrow \oplus_{v\in \mB_0} A \left( -\degw(v) \right) \\
\bfeps_\alpha &\mapsto \bfh_\alpha
\end{split}\]
where $\{\bfeps_\alpha \mid \bx^\alpha \in \mB_1'\}$ is the canonical basis of $\oplus_{\bx^\alpha \in \mB_1'} A \left( -\degw(\bx^\alpha) \right)$. Then, ${\rm Im}(\psi_1') = {\rm Ker}(\psi_0).$
\end{corollary}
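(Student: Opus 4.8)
The plan is to deduce this directly from Proposition~\ref{prop:ker_psi0}, since the only substantive content has already been established there; what remains is essentially bookkeeping about the image of a free module under a map prescribed on a basis, together with a check that the map respects the gradings.

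First I would verify that $\psi_1'$ is a well-defined morphism of $\omega$-graded $A$-modules, i.e.\ that each generator $\bfh_\alpha$ is $\omega$-homogeneous of degree $\degw(\bx^\alpha)$, so that $\bfeps_\alpha \mapsto \bfh_\alpha$ has degree $0$. Writing $\bx^\alpha = u\cdot M_\alpha$ with $u\in\mB_0\cap J$ and $M_\alpha\in G(I_u)\subset A$, the term $M_\alpha\cdot\bfeps_u$ lies in degree $\degw(M_\alpha)+\degw(u)=\degw(\bx^\alpha)$. For the remaining terms, note that $\bx^\alpha$ is a monomial and $I$ is \whom{}, so the remainder $r_\alpha$ of the division of $\bx^\alpha$ by $\mG$ is $\omega$-homogeneous of degree $\degw(\bx^\alpha)$; hence in the expansion $r_\alpha=\sum_{v\in\mB_0}f_{\alpha,v}\,v$ each nonzero $f_{\alpha,v}$ is $\omega$-homogeneous with $\degw(f_{\alpha,v})+\degw(v)=\degw(\bx^\alpha)$, so $f_{\alpha,v}\cdot\bfeps_v$ also lies in degree $\degw(\bx^\alpha)$. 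Therefore $\bfh_\alpha$ is homogeneous of degree $\degw(\bx^\alpha)$, and $\psi_1'$ is a graded $A$-module homomorphism.

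Next, since $\{\bfeps_\alpha\mid \bx^\alpha\in\mB_1'\}$ is an $A$-basis of the source and $\psi_1'$ is $A$-linear, its image is the $A$-submodule generated by the images of these basis elements, namely ${\rm Im}(\psi_1')=\langle \bfh_\alpha\mid \bx^\alpha\in\mB_1'\rangle$. By Proposition~\ref{prop:ker_psi0} this submodule equals $\ker(\psi_0)$, which gives ${\rm Im}(\psi_1')={\rm Ker}(\psi_0)$, as claimed.

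The argument has no real obstacle; the only point deserving care is the homogeneity check for $\psi_1'$ in the first step, and it reduces to the standard fact that division by a Gröbner basis of an \whom{} ideal preserves $\omega$-degree. Alternatively, one may observe that $\psi_0(\bfh_\alpha)=(\bx^\alpha-r_\alpha)+I=0$, already noted before Proposition~\ref{prop:ker_psi0}, which yields the inclusion ${\rm Im}(\psi_1')\subseteq\ker(\psi_0)$ at once, with Proposition~\ref{prop:ker_psi0} supplying the reverse inclusion.
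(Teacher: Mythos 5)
Your proof is correct and takes exactly the route the paper intends: the paper states this corollary without proof as an immediate consequence of Proposition~\ref{prop:ker_psi0}, and you simply spell out that deduction (image of a free module equals the submodule generated by the images of a basis), with the added, useful care of verifying that each $\bfh_\alpha$ is $\omega$-homogeneous of degree $\degw(\bx^\alpha)$ so that $\psi_1'$ is a degree-zero graded $A$-module homomorphism.
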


Since $R/I$ and $\oplus_{v\in \mB_0} A \left( -\degw(v) \right) / \ker(\psi_0)$ are isomorphic as graded $A$-modules, their minimal graded free resolutions coincide up to isomorphism. Thus, one can compute the short resolution by applying standard $A$-module computations to the submodule $\ker(\psi_0) = \langle \bfh_\alpha \mid \bx^\alpha \in \mB_1' \rangle \subset \oplus_{v\in \mB_0} A(-\deg_\omega(v))$. The whole process to obtain the short resolution of $R/I$ is shown in Algorithm~\ref{alg:short_res}. It has been implemented in the function {\tt shortRes} of \cite{github_shortres}. \newline

\begin{algorithm} 
\caption{Computation of the short resolution.} \label{alg:short_res}
\begin{flushleft}
    \textbf{Input:} $I \subset R$ a weighted homogeneous ideal with variables in Noether position \\
    \textbf{Output:} Short resolution of $R/I$
\end{flushleft}
\begin{algorithmic}[1]
\State $\mG$  $\gets$ reduced Gröbner basis of $I$ for $>_\omega$.
\State $\mB_0$ $\gets$ $\k$-basis of $\ini{I}+\id{x_{n-d+1},\ldots,x_n}$ for  $>_\omega$.
\State $J \gets \chi \left( \ini{I} \right) . R$, where $\chi: R \rightarrow R$ is defined by $\chi(x_i) = x_i$ for $i \in \{1,\ldots,n-d\}$, and $\chi(x_j) = 1$ for $j \in \{n-d+1,\ldots,n\}$.
\State $I_u \gets \left( \ini{I} : u \right) \cap A$, $\forall u \in \mB_0 \cap J$.
\State $G(I_u) \gets$ minimal monomial generating set of $I_u$, $\forall u \in \mB_0 \cap J$.
\State $\mB_1' \gets \{ u \cdot M \mid u \in \mB_0 \cap J, M \in G(I_u)\}$.
\State $r_\alpha \gets $ remainder of $\bx^\alpha$ by $\mG$, $\forall \bx^\alpha \in \mB_1'$.
\State For all $\bx^\alpha \in \mB_1'$, write $\bx^\alpha = M_\alpha u$ and $r_\alpha = \sum_{v\in \mB_0} f_{\alpha,v} v$.
\State $\bfh_\alpha \gets M_\alpha \bfeps_u - \sum_{v \in \mB_0} f_{\alpha,v} \bfeps_v$, $\forall \bx^\alpha \in \mB_1'$.
\State $\ker(\psi_0) \gets \langle \bfh_\alpha \mid \bx^\alpha \in \mB_1' \rangle$.
\State Compute the m.g.f.r. of $\ker(\psi_0)$.
\end{algorithmic}
\end{algorithm}

\begin{example}\label{ex:dim2_zerodiv_alg1}
Set $R = \Q[x_1,x_2,x_3,x_4,x_5]$,  
let $>$ be the degree reverse lexicographic order in $R$, and consider $I_{\mathcal{C}} \subset R$, the defining ideal of the projective monomial curve parametrized by the sequence $0<1<2<6<7$, i.e.,
\[ I_{\mathcal{C}} = \id{x_1-t_1t_2^6,x_2-t_1^2 t_2^5,x_3-t_1^6 t_2,x_4-t_1^7,x_5-t_2^7} \cap \Q[x_1,x_2,x_3,x_4,x_5] \, . \]
Let $L$ be the zero-dimensional ideal $L = \id{x_1^2-x_2x_3,x_2^3-x_4x_5^2,x_1x_2,x_3^2,x_4^2-x_5^2,x_2x_5,x_5^4}$, and consider the ideal $I = I_{\mathcal{C}} \cap L$. One has that $I$ is homogeneous, $\dim(R/I)=\dim(R/I_{\mathcal{C}})=2$, and variables are in Noether position, i.e., $A = \Q[x_4,x_5]$ is a Noether normalization of $R/I$. Moreover, $x_5$ is a zero divisor on $R/I$ because $f = x_2x_3^2-x_4^2x_5 \notin I$ while $fx_5 \in I$, so we are not under the hypotheses of \cite[Prop.~4]{BGGM2017}.
By Proposition~\ref{prop:first_step}, a minimal system of generators of $R/I$ as $A$-module is $\{u+I\mid u \in \mB_0\}$ for
\[\begin{split}
\mB_0 = \{ &{} u_{1}=x_{3}^{4},u_{2}=x_{3}^{3},u_{3}=x_{2} x_{3}^{2},u_{4}=x_{1} x_{3}^{2},u_{5}=x_{3}^{2},u_{6}=x_{2} x_{3},u_{7}=x_{1} x_{3},u_{8}=x_{3},\\
&{} u_{9}=x_{2}^{3}, u_{10}=x_{1} x_{2}^{2},u_{11}=x_{2}^{2},u_{12}=x_{1} x_{2},u_{13}=x_{2},u_{14}=x_{1}^{2},u_{15}=x_{1},u_{16}=1 \} \, .
\end{split}\]
If $\chi:R\rightarrow R$ is the ring homomorphism defined by $\chi(x_1)=x_1$, $\chi(x_2)=x_2$, $\chi(x_3)=x_3$, and $\chi(x_4)=\chi(x_5)=1$, then
$J = \chi\left( \ini{I} \right) . R= \id{x_3^5,x_1^2,x_1x_2,x_2^2,x_1x_3,x_2x_3}$, and hence
$\mB_0 \cap J = \{ u_3, u_4, u_6, u_7, u_9,u_{10}, u_{11}, u_{12}, u_{14} \}$, and
$I_{u_3} = I_{u_4} =\id{x_4,x_5}$, $I_{u_6} = \id{x_4^2,x_5^2}$, $I_{u_7} = \id{x_5^4,x_4x_5^2,x_4^2}$, $I_{u_9} = \id{x_5^2,x_4}$, $I_{u_{10}} = \id{x_4}$, $I_{u_{11}} = \id{x_4^2}$, $I_{u_{12}} = \id{x_4^3}$, and $I_{u_{14}} = \id{x_4^2,x_5}$. 
Thus, the set $\mB_1'$ defined in \eqref{eq:Bprime1} is
\[\begin{split}
\mB_1' = \{ &{} x_{2} x_{3}^{2} x_{4}, \, x_{2} x_{3}^{2} x_{5}, \, x_{1} x_{3}^{2} x_{4}, \, x_{1} x_{3}^{2} x_{5}, \, x_{2} x_{3} x_{4}^{2}, \, x_{2} x_{3} x_{5}^{2}, \, x_{1} x_{3} x_{5}^{4}, \,  x_{1} x_{3} x_{4} x_{5}^{2}, \, x_{1} x_{3} x_{4}^{2}, \\
&{} x_{2}^{3} x_{5}^{2}, \, x_{2}^{3} x_{4}, \, x_{1} x_{2}^{2} x_{4}, \, x_{2}^{2} x_{4}^{2}, \, x_{1} x_{2} x_{4}^{3}, \, x_{1}^{2} x_{4}^{2}, \, x_{1}^{2} x_{5} \}. 
\end{split}\]
Take the first element in $\mB_1'$, $\bx^\alpha=x_{2} x_{3}^{2} x_{4}=x_4u_3$, and compute the remainder $r_\alpha$ of its division by the reduced Gröbner basis of $I$ with respect to $>$: $r_\alpha=x_4^3x_5=x_4^3x_5u_{16}$. The corresponding element as in \eqref{eq:h} is $\bfh_\alpha = x_4 \bfeps_3 - x_4^3 x_5 \bfeps_{16}$
where $\{\bfeps_1,\ldots,\bfeps_{16}\}$ is the canonical basis of $\oplus_{i=1}^{16} A \left( -\deg(u_i) \right)$.
Doing the same for each monomial in $\mB_1'$, one gets 16 elements that generate the submodule $\ker(\psi_0)$ of the free module $\oplus_{i=1}^{16} A \left( -\deg(u_i) \right)$, and computing the minimal graded free resolution of this submodule, one gets the short resolution of $R/I$. 
Using the function {\tt shortRes} of \cite{github_shortres}, one gets directly the Betti table of the short resolution: 
\begin{center}
\begin{verbatim}
           0     1     2
------------------------
    0:     1     -     -
    1:     3     -     -
    2:     6     1     -
    3:     5    11     2
    4:     1     2     2
    5:     -     -     1
------------------------
total:    16    14     5
\end{verbatim}
\end{center}
Observe that in this example the set of 16 generators of $\ker(\psi_0)$ given by Proposition~\ref{prop:ker_psi0} is not minimal since the Betti table shows that $\ker(\psi_0)$ is minimally generated by 14 elements. We will come back to this example later in Example~\ref{ex:dim2_zerodiv}.
\end{example}

Interestingly, the system of generators provided in Proposition~\ref{prop:ker_psi0} is, in fact, a Gröbner basis for a monomial order in $\oplus_{v\in \mB_0} A(-\deg_\omega(v))$ that we now introduce. This can be used to provide another method for computing a graded free resolution of $R/I$ as $A$-module. 
For general results concerning Gröbner bases for submodules of the free $A$-module $A^m$, we refer the reader to \cite[Chap.~5]{CLO2} and \cite[Chap.~15]{Eis95}.

\begin{definition}\label{def:SLorder}
Consider the monomial order $>_{ \rm SL}$ in $\oplus_{v\in \mB_0} A(-\deg_\omega(v))$ defined as follows: for all $M,M' \in A$ monomials and $u,v \in \mB_0$, \[M \bfeps_u >_{\rm SL} M' \bfeps_v \Longleftrightarrow u \cdot M >_\omega v \cdot M' \, .\]
We call this monomial order the {\it Schreyer-like order} in $\oplus_{v \in \mB_0} A \left( -\degw(v) \right)$.
\end{definition}

If $\bar{\psi}_0$ is the homomorphism of $A$-modules introduced in the proof of Proposition~\ref{prop:ker_psi0}, $\bar{\psi}_0: \oplus_{v\in \mB_0} A\left( -\deg_\omega(v) \right) \rightarrow R$, $\bfeps_v\mapsto v$,
it is injective and maps monomials to monomials, and \[M \bfeps_u >_{\rm SL} M' \bfeps_v \Longleftrightarrow \bar{\psi}_0(M) >_\omega \bar{\psi}_0(M') \, .\]
This equivalent description of $>_{\rm SL}$ proves that it is a monomial order and justifies its name.

\begin{remark} \label{rem:initial}
For each $\bx^\alpha \in \mB_1'$, the initial term of $\bfh_\alpha = M_\alpha \cdot \bfeps_u - \sum_{v \in \mB_0} f_{\alpha,v} \cdot \bfeps_v$ for the Schreyer-like monomial order $>_{\rm SL}$ is $\ini{\bfh_\alpha} = M_\alpha \cdot \bfeps_u$.
\end{remark}

\begin{theorem}\label{them:GBschreyer}
The set $\mH = \{\bfh_\alpha \mid \bx^\alpha \in \mB_1'\}$ is the reduced Gröbner basis of $\ker(\psi_0)$ for the Schreyer-like order $>_{\rm SL}$.
\end{theorem}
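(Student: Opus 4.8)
The plan is to verify the two defining properties of a reduced Gröbner basis directly, rather than invoking Schreyer's theorem as a black box: here the ambient object $R/I$ is not literally a submodule of a free module (that is exactly why the order is only ``Schreyer-like''), but the submodule $\ker(\psi_0)\subseteq\oplus_{v\in\mB_0}A(-\degw(v))$ and its elements are concrete enough to argue about. The key preliminary observation is that the injective $A$-module map $\bar\psi_0\colon \oplus_{v\in\mB_0}A(-\degw(v))\to R$, $\bfeps_v\mapsto v$, from the proof of Proposition~\ref{prop:ker_psi0} commutes with the passage to initial terms. Indeed, since each $u\in\mB_0$ involves only $x_1,\dots,x_{n-d}$ while $A$ involves only $x_{n-d+1},\dots,x_n$, the map $M\bfeps_u\mapsto Mu$ carries distinct monomials of the free module to distinct monomials of $R$; hence no cancellation occurs when $\bar\psi_0$ is applied to any $0\neq g$, and combining this with the fact that $>_{\rm SL}$ is by definition the pullback of $>_\omega$ along $\bar\psi_0$ yields $\bar\psi_0(\ini{g})=\ini{\bar\psi_0(g)}$ (the left-hand initial term for $>_{\rm SL}$, the right-hand one for $>_\omega$). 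I would record this as a one-line lemma first.

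For the Gröbner basis property I would reuse almost verbatim the descent already carried out in the proof of Proposition~\ref{prop:ker_psi0}. Given $0\neq g\in\ker(\psi_0)$, the element $g':=\bar\psi_0(g)$ is a nonzero element of $I$, so $\ini{g'}\in\ini{I}$; writing $\ini{g'}=c\, w\, M_\gamma$ with $c\in\Bbbk\setminus\{0\}$, $w\in\mB_0$ (automatically, being the $x_1,\dots,x_{n-d}$-part of a monomial of $g$) and $M_\gamma\in A$ a monomial, the preliminary observation gives $\ini{g}=c\, M_\gamma\bfeps_w$ for $>_{\rm SL}$. Applying $\chi$ to $w M_\gamma\in\ini{I}$ shows $w\in\mB_0\cap J$, and $M_\gamma\in(\ini{I}:w)\cap A=I_w$, so some $M_\alpha\in G(I_w)$ divides $M_\gamma$; then $\bx^\alpha:=M_\alpha w\in\mB_1'$ and, by Remark~\ref{rem:initial}, $\ini{\bfh_\alpha}=M_\alpha\bfeps_w$ divides $\ini{g}$. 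Hence $\ini{\ker(\psi_0)}$ is generated by the $\ini{\bfh_\alpha}$, and since $\mH\subseteq\ker(\psi_0)$ (indeed $\mH$ generates it, by Proposition~\ref{prop:ker_psi0}), $\mH$ is a Gröbner basis for $>_{\rm SL}$.

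It then remains to check reducedness. Each $\bfh_\alpha$ is monic because $\ini{\bfh_\alpha}=M_\alpha\bfeps_u$ by Remark~\ref{rem:initial}, so what is left is to show that no monomial occurring in a given $\bfh_\alpha=M_\alpha\bfeps_u-\sum_{v}f_{\alpha,v}\bfeps_v$ is divisible by $\ini{\bfh_\beta}=M_\beta\bfeps_{u'}$ for any $\bx^\beta\in\mB_1'\setminus\{\bx^\alpha\}$ (write $\bx^\beta=M_\beta u'$). For the leading monomial $M_\alpha\bfeps_u$, divisibility would force $u=u'$ and $M_\beta\mid M_\alpha$, which is impossible since $M_\alpha$ and $M_\beta$ both lie in the \emph{minimal} generating set $G(I_u)$ while $\bx^\alpha\neq\bx^\beta$. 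For a tail monomial $N\bfeps_v$, the monomial $Nv$ occurs in the remainder $r_\alpha$ and hence lies outside $\ini{I}$; divisibility by $M_\beta\bfeps_{u'}$ would force $v=u'$ and $M_\beta\mid N$, but $M_\beta\in I_{u'}$ gives $M_\beta u'\in\ini{I}$, whence $Nv=Nu'\in\ini{I}$, a contradiction. This completes the verification.

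The argument is genuinely short, and I do not expect a serious obstacle beyond careful bookkeeping. The one point that must be handled cleanly is the correspondence furnished by $\bar\psi_0$: the no-cancellation remark (and the resulting identity $\bar\psi_0(\ini{g})=\ini{\bar\psi_0(g)}$) is what allows the $>_\omega$-picture inside $I\subseteq R$ to govern the $>_{\rm SL}$-picture in the free $A$-module, and in the reducedness step one must correctly translate monomial divisibility in $\oplus_{v\in\mB_0}A(-\degw(v))$ back into membership in $\ini{I}$ so that the defining properties of the $G(I_u)$ and of the remainders $r_\alpha$ can be applied.
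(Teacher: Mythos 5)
Your proof is correct, but it follows a genuinely different route from the paper. The paper establishes the Gröbner basis property via Buchberger's criterion, showing that every $S$-polynomial $S(\bfh_\alpha,\bfh_\beta)$ with $\ini{\bfh_\alpha},\ini{\bfh_\beta}$ sharing a basis element reduces to zero by a descent under $>_\omega$; this choice is not accidental, since the reductions of those $S$-polynomials are exactly the data the authors need next to assemble the Schreyer resolution~\eqref{eq:NoetherResnomin}. You instead verify the defining property directly: after isolating the clean observation that the injective monomial map $\bar\psi_0$ intertwines $>_{\rm SL}$ with $>_\omega$ without cancellation (so $\bar\psi_0(\ini{g})=\ini{\bar\psi_0(g)}$), you show that for any $0\neq g\in\ker(\psi_0)$ the initial term $\ini{g}=cM_\gamma\bfeps_w$ satisfies $M_\gamma\in I_w$, hence is divisible by some $\ini{\bfh_\alpha}$. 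Combined with $\mH\subset\ker(\psi_0)$ from Proposition~\ref{prop:ker_psi0}, this gives $\ini{\ker(\psi_0)}=\langle\ini{\bfh_\alpha}\rangle$ with no $S$-polynomial bookkeeping at all; in fact this argument would re-derive Proposition~\ref{prop:ker_psi0} as a corollary, so you could have avoided invoking it for the generation claim. Your reducedness check is also slightly cleaner than the paper's: you phrase minimality as divisibility of initial terms inside the free module (forcing $u=u'$ and then contradicting minimality of $G(I_u)$), which sidesteps the paper's stronger statement about divisibility of the $\bx^\alpha$ in $R$. The trade-off is that the paper's Buchberger-based argument doubles as the first step of the Schreyer construction it uses afterwards, whereas your argument, though shorter and more self-contained for the theorem as stated, would leave that construction to be set up separately.
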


\begin{proof}
By Proposition~\ref{prop:ker_psi0}, $\ker(\psi_0) = \langle \mH \rangle$. By Buchberger's criterion, $\mH$ is a Gr\"obner basis if and only if,
for all $\bfh_\alpha,\bfh_\beta \in \mH$, the $S$-polynomial $S(\bfh_\alpha,\bfh_\beta)$ reduces to zero modulo $\mH$. One has that $S(\bfh_\alpha,\bfh_\beta)=0$ whenever $\ini{\bfh_\alpha}$ and $\ini{\bfh_\beta}$ are 
multiples of different elements in the canonical basis $\{\bfeps_v \mid v\in \mB_0\}$.
Let $\bfh_\alpha,\bfh_\beta$ be two elements in $\mH$ whose initial terms are 
multiples of the same element in the canonical basis.
By Remark~\ref{rem:initial}, there exist monomials $u \in \mB_0$ and $M_\alpha,M_\beta \in A$, such that $\ini{\bfh_\alpha} = M_\alpha \bfeps_u$ and $\ini{\bfh_\beta} = M_\beta \bfeps_u$. Set $h_\alpha' = u M_\alpha - r_\alpha$ and $h_\beta' = u M_\beta -r_\beta$, where $r_\alpha$ and $r_\beta$ are the remainder of the division of $\bx^\alpha = u M_\alpha $ and $\bx^\beta = u M_\beta $ by $\mG$ (the reduced Gröbner basis of $I$ for $>_\omega$), respectively.
Let $M = \lcm(M_\alpha,M_\beta)$ be the least common multiple of $M_\alpha$ and $M_\beta$. Then, the $S$-polynomial of $\bfh_\alpha$ and $\bfh_\beta$ is 
\[S_{\alpha,\beta} = S(\bfh_\alpha,\bfh_\beta) = \frac{M}{M_\alpha} \bfh_\alpha - \frac{M}{M_\beta} \bfh_\beta \, .\]
If $S(\bfh_\alpha,\bfh_\beta) = 0$, we are done. Otherwise, note that $\psi_0(S_{\alpha,\beta}) = 0$, so $\bar{\psi}_0 (S_{\alpha,\beta}) \in I$, and hence  $\ini{\bar{\psi}_0 (S_{\alpha,\beta})} \in \ini{I}$. Thus, there exist $c\in \Bbbk$, $w\in \mB_0 \cap J$ and a monomial $M_\mu \in A$ such that $\ini{\bar{\psi}_0 (S_{\alpha,\beta})} = c\cdot w \cdot M_\mu$. Therefore, $M_\mu \in I_w$, and there exists a monomial $M_\gamma \in G(I_w)$ that divides $M_\mu$. Let $\bfh_\gamma \in \mH$ be the element whose initial term is $\ini{\bfh_\gamma} = M_\gamma \bfeps_w$. Consider $S_{\alpha,\beta}' = S_{\alpha,\beta} - c\cdot \frac{M_\mu}{M_\gamma} \bfh_\gamma$. If $S_{\alpha,\beta}' = 0$, we are done. Otherwise, one has that $\bar{\psi}_0(S_{\alpha,\beta}') \in I$ and
$0 \neq \ini{\bar{\psi}_0 (S_{\alpha,\beta}')} <_\omega \ini{\bar{\psi}_0 (S_{\alpha,\beta})}$. We can iterate this process and conclude that $S_{\alpha,\beta}$ reduces to zero modulo $\mH$ by induction because $>_\omega$ is a well order. This shows that $\mH$ is a Gröbner basis of $\ker(\psi_0)$ for $>_{\rm SL}$.

Moreover, since $\bx^\alpha \nmid \bx^\beta$ and $\bx^\beta \nmid \bx^\alpha$ for all $\bx^\alpha \neq \bx^\beta$ in $\mB_1'$, $\mH$ is  minimal. Finally, for each $\bx^\alpha \in \mB_1'$, every monomial appearing in $r_\alpha$ (the remainder of the division of $\bx^\alpha$ by $\mG$), does not belong to $\ini{I}$. Therefore, each monomial that appears in $\sum_{v\in \mB_0} f_{\alpha,v} \cdot \bfeps_v$ does not belong to $\langle \ini{\bfh_\beta} \mid \bfh_\beta \in \mH \rangle = \ini{\ker (\psi_0)}$, and we are done. 
\end{proof}

Since $\mH$ is a Gröbner basis of $\ker(\psi_0)$,
the reductions of the $S$-polynomials $S_{\alpha\beta}$ provide a generating set for the next syzygy module by \cite[Chap.~5, Thm.~3.2]{CLO2}. 
This generating set is indeed a Gröbner basis by Schreyer's Theorem \cite[Thm.~15.10]{Eis95}. The order used here is the Schreyer order induced in
$\oplus_{\bx^\alpha \in \mB_1'} A \left( -\degw(\bx^\alpha) \right)$
by our Schreyer-like order in $\oplus_{v\in \mB_0} A(-\deg_\omega(v))$. 
Applying repeatedly Schreyer's Theorem, we obtain the co-called Schreyer resolution that may not be minimal. Moreover, if we sort at each step the elements of the Gröbner basis as in \cite[Cor.~15.11]{Eis95}, one variable disappears from the initial terms of the elements in the Gröbner basis at each step. Mimicking the proof of Hilbert's Syzygies Theorem that uses iteratedly Schreyer's Theorem, we obtain a $\omega$-graded free resolution of $R/I$ as $A$-module that may not be minimal but has at most $d$ steps,
\begin{equation}\label{eq:NoetherResnomin}
\mathcal{F}': 0 \rightarrow \oplus_{v\in \mB_{p'}'} A(-\deg_\omega(v)) \xrightarrow{\psi_{p'}'} \dots \xrightarrow{\psi_1'} \oplus_{v\in \mB_0} A(-\deg_\omega(v)) \xrightarrow{\psi_0} R/I \rightarrow 0 \, ,
\end{equation}
where $d \geq p'\geq p$ and $\mB_i' \subset R$ is a set of monomials for all $i$.   
Minimalizing this resolution, a short resolution of $R/I$ as in \eqref{eq:NoetherRes} is obtained with $\mB_i \subset \mB_i'$. 

We now illustrate with an example how to build and minimize Schreyer's resolution. We will see later in Section~\ref{sec:pruning} how to explicitly obtain the short resolution \eqref{eq:NoetherRes} from Schereyer's resolution \eqref{eq:NoetherResnomin} when $R/I$ is a simplicial semigroup ring of dimension $3$.

\begin{example} \label{ex:dim2_zerodiv}
Consider the ideal $I\subset R=\Q[x_1,x_2,x_3,x_4,x_5]$ in Example~\ref{ex:dim2_zerodiv_alg1}. 
We have already determined $\mB_0$ and $\mB_1'$, and we now sort the elements in $\mB_1'$ as follows:
\[\begin{split}
\mB_1' = \{ &{} v_{1}=x_{2} x_{3}^{2} x_{4},v_{2}=x_{2} x_{3}^{2} x_{5},v_{3}=x_{1} x_{3}^{2} x_{4},v_{4}=x_{1} x_{3}^{2} x_{5},v_{5}=x_{2} x_{3} x_{4}^{2},v_{6}=x_{2} x_{3} x_{5}^{2},\\
&{} v_{7}=x_{1} x_{3} x_{4}^{2},v_{8}=x_{1} x_{3} x_{4} x_{5}^{2}, v_{9}=x_{1} x_{3} x_{5}^{4},v_{10}=x_{2}^{3} x_{4},v_{11}=x_{2}^{3} x_{5}^{2},v_{12}=x_{1} x_{2}^{2} x_{4}, \\
&{} v_{13}=x_{2}^{2} x_{4}^{2},v_{14}=x_{1} x_{2} x_{4}^{3},v_{15}=x_{1}^{2} x_{4}^{2},v_{16}=x_{1}^{2} x_{5} \} \,.
\end{split}\]
The 16 generators of $\ker(\psi_0)$ given by Proposition~\ref{prop:ker_psi0} are
\begin{center}
\begin{tabular}{lll}
$\bfh_1 = x_4 \bfeps_3 - x_4^3x_5 \bfeps_{16}$,  &  $\bfh_2 = x_5 \bfeps_3 - x_4^2x_5^2 \bfeps_{16}$,  &  $\bfh_3 = x_4 \bfeps_4 - x_5 \bfeps_9 - x_4^2x_5 \bfeps_8 + x_5^3 \bfeps_8$, \\
$\bfh_4 = x_5 \bfeps_4 - x_4 x_5^2 \bfeps_8$,  &  $\bfh_5 = x_4^2 \bfeps_6 - x_4^3 \bfeps_{15}$,  &  $\bfh_6 = x_5^2 \bfeps_6 - x_4 x_5^2 \bfeps_{15}$, \\
\multicolumn{2}{l}{$\bfh_7 = x_4^2 \bfeps_7 - x_4^3x_5 \bfeps_{16} - x_5^2 \bfeps_7 + x_4x_5^3 \bfeps_{16}$} & $\bfh_8 = x_4x_5^2 \bfeps_7 - x_4^2x_5^3 \bfeps_{16}$,\\
$\bfh_9 = x_5^4 \bfeps_7 - x_4x_5^5 \bfeps_{16}$, & $\bfh_{10} = x_4 \bfeps_9 - x_4 x_5^2 \bfeps_8$, &  $\bfh_{11} = x_5^2 \bfeps_{9} - x_5^4 \bfeps_8$,  \\
$\bfh_{12} = x_4 \bfeps_{10} - x_5^2 \bfeps_5$, & $\bfh_{13} = x_4^2 \bfeps_{11} - x_5 \bfeps_2$,  &  $\bfh_{14} = x_4^3 \bfeps_{12} - x_5 \bfeps_1$, \\
$\bfh_{15} = x_4^2 \bfeps_{14} - x_4^2 x_5 \bfeps_{13}$, & $\bfh_{16} = x_5 \bfeps_{14}- x_5^2 \bfeps_{13}$ , & \\
\end{tabular}
\end{center}
where $\{\bfeps_1,\ldots,\bfeps_{16}\}$ denotes the canonical basis of $\oplus_{i=1}^{16} A \left( -\deg(u_i) \right)$.

By Theorem~\ref{them:GBschreyer}, $\mH=\{\bfh_{1},\ldots,\bfh_{16}\}$ is the reduced Gröbner basis of $\ker(\psi_0)$ for our Schreyer-like order $>_{\rm SL}$. Moreover, in the above list, the first term of each element is its initial term by Remark~\ref{rem:initial}.
Note that we have sorted $\bfh_1,\ldots,\bfh_{16}$ 
(and, accordingly, $v_1,\ldots,v_{16}$ in $\mB_1'$)
in such a way that, if for some $i<j$, the initial terms of $\bfh_i$ and $\bfh_j$ are multiples of the same element of the canonical basis, say $\ini{\bfh_i} = M_i \cdot \bfeps_u$ and $\ini{\bfh_j} = M_j \cdot \bfeps_u$ for some $u\in \mB_0$ and two monomials $M_i$ and $M_j$ in $A = \Q[x_4,x_5]$, then $M_i>M_j$ for the lexicographic order $>$ with $x_4>x_5$, i.e., if $M_i=x_4^{a_i}x_5^{b_i}$ and $M_j=x_4^{a_j}x_5^{b_j}$, $a_i > a_j$. This guarantees that $x_4$ will not appear in the leading terms of the generators of the next syzygy module obtained by applying Schreyer's Theorem \cite[Thm.~15.10]{Eis95}, and hence we will be done.

The reductions of the $S$-polynomials $S(\bfh_i,\bfh_j)$ for all $1 \leq i < j \leq 16$, provide a Gröbner basis of the next syzygy module for the induced Schreyer order. Since the only $S$-polynomials that have to be computed and reduced by $\mH$ are the $S(\bfh_i, \bfh_j)$ such that the initial terms of $\bfh_i$ and $\bfh_j$ 
are multiples of the same element in the canonical basis, one just has to focus on 
$S(\bfh_1,\bfh_2)$, $S(\bfh_3,\bfh_4)$, $S(\bfh_5,\bfh_6)$, $S(\bfh_7,\bfh_8)$, $S(\bfh_7,\bfh_9)$, $S(\bfh_8,\bfh_9)$, $S(\bfh_{10},\bfh_{11})$ and $S(\bfh_{15},\bfh_{16})$.
Note that the leading term of the syzygy corresponding to the reduction of $S(\bfh_7,\bfh_9)$ is a multiple of the one coming from $S(\bfh_7,\bfh_8)$, and hence the syzygy coming from $S(\bfh_7,\bfh_9)$ will be discarded when the 
Gröbner basis is minimalized. Thus, we do not need to compute it, and by reducing the other seven $S$-polynomials, we get that the set of monomials $\mB_2'$ is 
\[\begin{split}
\mB_2' = \{ &{}w_{1}=x_{2} x_{3}^{2} x_{4} x_{5},w_{2}=x_{1} x_{3}^{2} x_{4} x_{5},w_{3}=x_{2} x_{3} x_{4}^{2} x_{5}^{2},w_{4}=x_{1} x_{3} x_{4}^{2} x_{5}^{2},w_{5}=x_{1} x_{3} x_{4} x_{5}^{4},\\
&{} w_{6}=x_{2}^{3} x_{4} x_{5}^{2}, w_{7}=x_{1}^{2} x_{4}^{2} x_{5}\} \, .
\end{split}\]
Hence, a graded free resolution of $R/I$ as $A$-module is 
\[
0 \rightarrow  
\oplus_{v\in \mB_2'} A(-\deg(v))
\xrightarrow{\psi_2'}
\oplus_{v\in \mB_1'} A(-\deg(v))
\xrightarrow{\psi_1'}
\oplus_{v\in \mB_0} A(-\deg(v))
\xrightarrow{\psi_0} R/I \rightarrow 0 \, ,
\]
where the matrix of $\psi_0$ is 
$\left(\begin{array}{rrrr}u_1+I & u_2+I &\ldots& u_{16}+I\end{array}\right)$,
the matrix of $\psi_1'$ is the square matrix
$\left(\begin{array}{rrrr}\bfh_1 & \bfh_2 &\ldots& \bfh_{16}\end{array}\right)$,
and the matrix of $\psi_2'$ is given by the reductions of the S-polynomials 
$S(\bfh_1,\bfh_2)$, $S(\bfh_3,\bfh_4)$, $S(\bfh_5,\bfh_6)$, $S(\bfh_7,\bfh_8)$, $S(\bfh_8,\bfh_9)$, $S(\bfh_{10},\bfh_{11})$, and $S(\bfh_{15},\bfh_{16})$.
Since there are nonzero constants in the reduction of 
the second and fourth $S$-polynomials,
\[\begin{split}
S(\bfh_3,\bfh_4) &= x_5\bfh_3-x_4\bfh_4 = -x_5^2 \bfeps_9 + x_5^4 \bfeps_8 = -\bfh_{11} \, ,\\
S(\bfh_7,\bfh_8) &= x_5^2\bfh_7-x_4\bfh_8 = -x_5^4 \bfeps_7 + x_4x_5^5 \bfeps_{16} = -\bfh_9 \, ,
\end{split}\]
the above resolution is not minimal. Making it minimal, we get the short resolution of $R/I$ as in \eqref{eq:NoetherRes} for 
$\mB_1 = \mB_1' \setminus \{v_9,v_{11}\}$ and $\mB_2 = \mB_2' \setminus \{w_2,w_4\}$. 
Reordering, at each step, the generators (and hence the rows and columns of the matrices defining the morphisms),
the short resolution of $R/I$ shows as
{\small\[ 0 \rightarrow  \begin{array}{c} A(-5)^2 \oplus A(-6)^2 \\ \oplus A(-7) \end{array}  
\xrightarrow{\psi_2} \begin{array}{c} A(-3) \oplus A(-4)^{11}  \\ \oplus A(-5)^2  \end{array} 
\xrightarrow{\psi_1} \begin{array}{c} A \oplus A(-1)^3 \oplus A(-2)^6  \\ \oplus A(-3)^5 \oplus A(-4) \end{array}
\xrightarrow{\psi_0} R/I \rightarrow 0 \, ,\]}
and the Betti table is the same as the one given in Example~\ref{ex:dim2_zerodiv_alg1}.
\end{example}

Since the Hilbert series of $R/I$ can be determined using any graded free resolution of $R/I$, we can use the Schreyer resolution \eqref{eq:NoetherResnomin} to compute it, and we get 
\begin{equation}\label{eq:hilbSeries}
\hs{R/I}(t) = \dfrac{\sum_{i=0}^{p'} \sum_{v \in \mB_i'} (-1)^i \, t^{\deg_\omega (v)}}{(1-t)^d} \, .
\end{equation}
As $d = \dim(R/I)$, the numerator does not vanish at $t = 1$ and the expression of the Hilbert series cannot be simplified. As a consequence, one can compute the Hilbert-Samuel multiplicity of $R/I$ (which is the degree of the projective algebraic variety defined by $I$ whenever $I$ is homogeneous) from the size of the sets $\mB_i'$.

\begin{proposition} \label{prop:deg_shortres}
Denote by $e(R/I)$ the {\rm(}Hilbert-Samuel{\rm)} multiplicity of $R/I$. Then,
\begin{enumerate}[(a)]
    \item\label{prop:deg_shortres_a} $e(R/I)=\sum_{i=0}^{p'} (-1)^i |\mB_i'| $.
    \item\label{prop:deg_shortres_b} For all $s\in \N$, \[\sum_{k=0}^d (-1)^k \binom{d}{k} \hf{R/I}(s-k) = \sum_{i=0}^{p'} (-1)^i |(\mB_i')_s| \, , \] where $(\mB_i')_s := \{v \in \mB_i' \mid \deg_\omega(v) = s\}$.
\end{enumerate}
\end{proposition}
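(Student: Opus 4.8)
The statement is a formal consequence of the Hilbert series formula \eqref{eq:hilbSeries}. I would begin by clearing the denominator there and recording the resulting identity
\[ \hs{R/I}(t)\,(1-t)^d \;=\; H(t) \, , \qquad H(t) := \sum_{i=0}^{p'}\sum_{v\in\mB_i'}(-1)^i\,t^{\deg_\omega(v)} \, , \]
valid as formal power series in $t$; here $H(t)$ is genuinely a polynomial in $\Z[t]$ because each $\mB_i'$ is a finite set of monomials. The whole proof then reduces to reading off the coefficient of $t^s$ in this identity and evaluating it at $t=1$.

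For part (a), I would combine this identity with its counterpart coming from the minimal short resolution. As recalled in the Introduction, the minimal resolution \eqref{eq:NoetherRes} gives $\hs{R/I}(t)=h(t)/(1-t)^d$ with $h(t):=\sum_{i=0}^{p}\sum_{v\in\mB_i}(-1)^i t^{s_{i,v}}$ and $h(1)=e(R/I)$. Comparing with \eqref{eq:hilbSeries} forces $h(t)=H(t)$, since both polynomials equal $\hs{R/I}(t)(1-t)^d$; hence $e(R/I)=h(1)=H(1)=\sum_{i=0}^{p'}(-1)^i|\mB_i'|$, which is part (a). One may also argue directly: since $\dim(R/I)=d$, the quantity $\hs{R/I}(t)(1-t)^d$ has a finite nonzero limit at $t=1$, equal by the classical description of the multiplicity to $e(R/I)$, and this limit is $H(1)$.

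For part (b), I would fix $s\in\N$ and compare the coefficients of $t^s$ on the two sides of the displayed identity. Writing $\hs{R/I}(t)=\sum_{m\geq 0}\hf{R/I}(m)\,t^m$ and $(1-t)^d=\sum_{k=0}^{d}(-1)^k\binom{d}{k}t^k$, the coefficient of $t^s$ on the left is $\sum_{k=0}^{d}(-1)^k\binom{d}{k}\hf{R/I}(s-k)$, with the convention $\hf{R/I}(j)=0$ for $j<0$; on the right, the coefficient of $t^s$ in $H(t)$ is $\sum_{i=0}^{p'}(-1)^i\,|(\mB_i')_s|$ by the very definition of $(\mB_i')_s$. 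Equating the two proves part (b). (As a consistency check, both sides vanish for $s\gg 0$: the right-hand side because the sets $\mB_i'$ are finite, and the left-hand side because for large $s$, $\hf{R/I}$ coincides with a polynomial of degree less than $d$, which is annihilated by the $d$-th finite-difference operator appearing there. One may also re-derive part (a) from part (b) by summing over all $s\in\N$, the right-hand sides adding to $\sum_i(-1)^i|\mB_i'|$ and the left-hand sides to the sum of the coefficients of $H$, that is to $H(1)$.)

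I do not foresee any real obstacle: the mathematical content is carried entirely by \eqref{eq:hilbSeries} — itself obtained in the preceding text from \eqref{eq:NoetherResnomin} being an $\omega$-graded free resolution of $R/I$ over $A$ — and the argument is pure power-series bookkeeping. The only point that strictly needs a reference, namely that $\hs{R/I}(t)(1-t)^d$ evaluated at $t=1$ computes $e(R/I)$, is classical and is in any case bypassed above by comparison with the minimal short resolution.
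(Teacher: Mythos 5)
Your proof is correct and follows essentially the same route as the paper: clear the denominator in \eqref{eq:hilbSeries}, evaluate the numerator at $t=1$ for part~(a), and compare coefficients of $t^s$ for part~(b). The extra remarks (comparison with the minimal short resolution, vanishing for $s\gg 0$) are sound but not needed; the paper's proof is exactly your core argument.
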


\begin{proof}
Evaluating the numerator of the Hilbert series \eqref{eq:hilbSeries} in $t=1$, we obtain~\ref{prop:deg_shortres_a}. For~\ref{prop:deg_shortres_b}, note that  
\[(1-t)^d \hs{R/I}(t) = \sum_{i=0}^{p'} \sum_{v \in \mB_i'} (-1)^i \, t^{\deg_\omega (v)} \] 
and compare the coefficient of $t^s$ in both sides of the equality.
\end{proof}

Although the Schreyer resolution \eqref{eq:NoetherResnomin} is not minimal in general, there are cases in which it is known to be: when $R/I$ is Cohen-Macaulay (\cite[Prop.~1]{BGGM2017}), when $\dim(R/I) =1$ (\cite[Prop.~3]{BGGM2017}), or when $\dim(R/I) = 2$ and $x_n$ is not a zero divisor of $R/I$ (\cite[Prop.~4]{BGGM2017}). The following straightforward result provides another case in which it is minimal.

\begin{proposition}
If, for all $u \in \mB_0 \cap J$,
the monomial ideal $I_u = (\ini{I} : u ) \cap A$ is principal, then
\[0 \rightarrow \oplus_{v\in \mB_1'} A(-\deg_\omega(v)) \xrightarrow{\psi_1'} \oplus_{v\in \mB_0} A(-\deg_\omega(v)) \xrightarrow{\psi_0} R/I \rightarrow 0 \]
is the short resolution of $R/I$, i.e., it is the minimal graded free resolution of $R/I$ as $A$-module. In particular, ${\rm depth}(R/I) = d-1$.
\end{proposition}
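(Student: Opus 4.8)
The plan is to show that, under the hypothesis that every $I_u$ is principal, the set $\mH = \{\bfh_\alpha \mid \bx^\alpha \in \mB_1'\}$ produced in Proposition~\ref{prop:ker_psi0} is already the \emph{minimal} generating set of $\ker(\psi_0)$ \emph{and} that the next syzygy module vanishes, so that the two-term complex displayed in the statement is exact and minimal. By Corollary~\ref{cor:segundopaso}, ${\rm Im}(\psi_1') = \ker(\psi_0)$, so the complex is already a graded free resolution of $R/I$ as an $A$-module; what remains is to prove that it is minimal, equivalently that $\psi_1'$ has no unit entries and that $\psi_1'$ is injective.

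First I would address minimality of $\psi_1'$, i.e.\ that no entry of the matrix of $\psi_1'$ is a nonzero constant. By Remark~\ref{rem:initial}, for each $\bx^\alpha = M_\alpha\cdot u \in \mB_1'$ the initial term of $\bfh_\alpha$ for $>_{\rm SL}$ is $M_\alpha\cdot\bfeps_u$, and since $I_u$ is principal its unique minimal generator $M_\alpha$ satisfies $\degw(M_\alpha)\geq 1$ (it cannot be $1$, as $u\notin\ini{I}$ because $u\in\mB_0$); hence the $\bfeps_u$-component of $\bfh_\alpha$ is a non-constant monomial. For the remaining components, recall that $r_\alpha = \sum_{v\in\mB_0} f_{\alpha,v}\,v$ is the remainder of $\bx^\alpha$ modulo $\mG$, and $\bx^\alpha$ is $\omega$-homogeneous of degree $\degw(M_\alpha)+\degw(u) > \degw(u)$. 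Since $r_\alpha$ is $\omega$-homogeneous of the same degree and each $v\in\mB_0$ has $\degw(v)\le\degw(r_\alpha)$ with equality forcing $f_{\alpha,v}$ to be a scalar, the only way a unit could appear in $\bfh_\alpha$ is as $f_{\alpha,v}\in\k\setminus\{0\}$ with $\degw(v)=\degw(\bx^\alpha)$; but then $v = \bx^\alpha$ would have to be a monomial in $\mB_0$, contradicting $\bx^\alpha\in J$ (so $\bx^\alpha\in\ini{I}$, forcing $\bx^\alpha\notin\mB_0$). Thus $\psi_1'$ has no unit entries, so the complex, if exact, is minimal.

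Next I would prove exactness on the left, i.e.\ $\ker(\psi_1') = 0$, equivalently that the $\bfh_\alpha$ are $A$-linearly independent, equivalently — since $\mH$ is the reduced Gr\"obner basis of $\ker(\psi_0)$ by Theorem~\ref{them:GBschreyer} — that all $S$-polynomials $S(\bfh_\alpha,\bfh_\beta)$ reduce to zero \emph{trivially}, producing no new syzygies. Here is where the principality hypothesis does the real work: by Remark~\ref{rem:initial}, two initial terms $\ini{\bfh_\alpha}=M_\alpha\bfeps_u$ and $\ini{\bfh_\beta}=M_\beta\bfeps_v$ with $u\ne v$ give $S(\bfh_\alpha,\bfh_\beta)=0$ as noted in the proof of Theorem~\ref{them:GBschreyer}; and if $u=v$, then $M_\alpha$ and $M_\beta$ are both the \emph{unique} minimal generator of the principal ideal $I_u$, hence $M_\alpha=M_\beta$ and $\bx^\alpha=\bx^\beta$, so no pair with $u=v$ and $\alpha\ne\beta$ occurs at all. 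Therefore every $S$-polynomial of distinct elements of $\mH$ is zero, Schreyer's syzygy module $\mB_2'$ is empty, and $\psi_1'$ is injective. Combined with the previous paragraph, the displayed complex is the minimal graded free resolution of $R/I$ as an $A$-module, so $p = {\rm pd}_A(R/I) = 1$, and by the Auslander--Buchsbaum formula (cf.\ the introduction) ${\rm depth}_A(R/I) = {\rm depth}(A) - 1 = d-1$.

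I expect the only genuinely delicate point to be the verification in the second paragraph that no unit slips into the $\bfe_v$-components of $\bfh_\alpha$; this is a degree bookkeeping argument exploiting $\omega$-homogeneity and the fact that $\bx^\alpha\in\ini{I}$ (so $\bx^\alpha$ is strictly larger than every element of $\mB_0$ of the same $\omega$-degree, or rather cannot itself lie in $\mB_0$), but it must be stated carefully because in principle a non-leading term of $r_\alpha$ could share the top $\omega$-degree. Everything else is a direct consequence of Theorem~\ref{them:GBschreyer} together with the observation that principality of each $I_u$ collapses the Gr\"obner basis so that no two generators share a leading basis vector.
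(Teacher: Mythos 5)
Your overall strategy is sound, and the third paragraph contains the crucial and correct observation: when each $I_u$ is principal there is at most one $\bfh_\alpha$ whose leading term (for $>_{\rm SL}$) lies over a given $\bfeps_u$, so no two elements of $\mH$ share a leading-term component, every $S$-pair is trivial, and Schreyer's theorem gives a zero second syzygy module, i.e.\ $\ker(\psi_1')=0$. This is exactly where the hypothesis does its work, and it is the right way to use Theorem~\ref{them:GBschreyer}.

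The second paragraph, however, has a genuine gap. You correctly reduce excluding unit entries of $\psi_1'$ to excluding a nonzero scalar $f_{\alpha,v}$ with $v\neq u$ and $\degw(v)=\degw(\bx^\alpha)$, but the subsequent claim that ``then $v=\bx^\alpha$'' is unjustified: such a $v$ need only share the $\omega$-degree, and in fact $v\neq\bx^\alpha$ always, since $v\in\mB_0$ involves only $x_1,\ldots,x_{n-d}$ while $\bx^\alpha=u M_\alpha$ involves at least one of $x_{n-d+1},\ldots,x_n$. (Your parenthetical remark ``$\bx^\alpha$ is strictly larger than every element of $\mB_0$ of the same $\omega$-degree'' has the inequality backwards, for the same reason.) The gap is easy to close, and in fact minimality of $\psi_1'$ is \emph{free} and needs no hypothesis on the $I_u$: since $\{u+I \mid u\in\mB_0\}$ is a minimal generating set of $R/I$ by Proposition~\ref{prop:first_step}, graded Nakayama gives $\ker(\psi_0)\subset\mathfrak m F_0$ for $\mathfrak m:=\id{x_{n-d+1},\ldots,x_n}\subset A$, and since each $\bfh_\alpha\in\ker(\psi_0)$, every entry of $\psi_1'$ lies in $\mathfrak m$. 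Alternatively, if you prefer to complete the order-theoretic bookkeeping you began: any $v\in\mB_0$ with $\degw(v)=\degw(\bx^\alpha)$ satisfies $v>_\omega\bx^\alpha$ for the $\omega$-graded reverse lexicographic order (the last nonzero entry of the exponent difference is negative, because $\bx^\alpha$ involves the trailing variables and $v$ does not), whereas every term of the remainder $r_\alpha$ is $<_\omega\bx^\alpha$, so no such $v$ can occur in $r_\alpha$. Finally, the conclusion $\depth{R/I}=d-1$ (in the statement as well as in your argument) implicitly assumes $\mB_0\cap J\neq\emptyset$, i.e.\ ${\rm pd}_A(R/I)=1$ and not $0$; if $\mB_1'=\emptyset$ one gets $\depth{R/I}=d$ instead.
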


The condition in the previous result is not necessary and one can have $\depth{R/I} = d-1$ when $I_u$ is not principal for some $u \in \mB_0 \cap J$ as the following example shows.

\begin{example}\label{ex:noprincipalnominimal}
Consider $R = \Q[x_1,\ldots,x_7]$, $A = \Q[x_5,x_6,x_7]$, and $R/I$, the $3$-dimensional simplicial toric ring defined by 
$\mA = \{(1,3,5), (5,1,5), (3,5,3), (5,5,1)$, $(2,0,0), (0,2,0), (0,0,2)\}$,
i.e., 
\[I = \id{x_1-t_1 t_2^3 t_3^5, x_2-t_1^5 t_2 t_3^5, x_3-t_1^3t_2^5t_3^3,x_4-t_1^5t_2^5t_3,x_5-t_1^2,x_6-t_2^2,x_7-t_3^2} \cap \Q[x_1,\ldots,x_7] \, .\]
Variables are in Noether position, and $I$ is $\omega$-homogeneous for $\omega = (9,11,11,11,2,2,2)$.
One can check, using for example \cite{Sage}, that $\mB_0 = \{x_4,x_3,x_2,x_1,1\}$, $\mB_0 \cap J = \{x_3,x_2,x_1\}$, $I_{x_3} = \id{x_5}$, $I_{x_2} = \id{x_6^2}$ and $I_{x_1} = \id{x_5^2,x_5x_6}$. Hence, $\mB_1' = \{x_3x_5,x_2x_6^2,x_1x_5^2,x_1x_5x_6\}$ and $\mB_2' = \{ x_1x_5^2x_6 \}$. However, the $\omega$-graded short resolution of $R/I$, which can be computed using the function {\tt shortRes} of \cite{github_shortres}, is \[0 \rightarrow A(-13)^3 \rightarrow A \oplus A(-9) \oplus A(-11)^3 \rightarrow R/I \rightarrow 0 \, ,\]
so $|\mB_1| = 3$ and $|\mB_2|=0$.
Therefore, ${\rm pd}_{A}(R/I) = 1$ and $\depth{R/I} = d-1$, although $I_{x_1}$ is not principal.
\end{example}

\section{Simplicial toric rings of dimension 3} \label{sec:homology}

Let $\mA = \{\bfa_1,\ldots,\bfa_n\} \subset \N^d$ be a finite set of nonzero vectors and $\mS   \subset \N^d$ the affine monoid spanned by $\mA$. We suppose that the semigroup $\mS$ is simplicial, and hence we can assume without loss of generality that $\bfa_{n-d+i} = \omega_{n-d+i} \bfeps_i$ for all $i\in \{1,\ldots,d\}$, where $\{\bfeps_1,\ldots,\bfeps_d\}$ denotes the canonical basis of $\N^d$, and $\omega_{n-d+1},\ldots,\omega_n \in \mathbb Z^+$.
Set $\bfe_i = \bfa_{n-d+i}$ for all $i\in \{1,\ldots,d\}$ and denote by $\mE = \{\bfe_1,\ldots,\bfe_d\}$ the extremal rays of the rational cone spanned by $\mA$.

\subsection{A combinatorial description of the short resolution} 

\ \vspace{1mm} 

The simplicial semigroup ring $\ks$ is an $\mS$-graded $\Bbbk$-algebra isomorphic to $R/I_\mA$, where $I_\mA$ is the toric ideal defined by $\mA$. 
We study here the multigraded short resolution of $\ks$ with respect to the multigrading $\degs{x_i} = \deg_\mS(x_i) = \bfa_i \in \mS$; namely, 
\begin{equation}
\mathcal{F}: 0 \rightarrow \oplus_{\bfs\in \mS_p} A (-\bfs) \xrightarrow{\psi_p} \dots \xrightarrow{\psi_1} \oplus_{\bfs\in \mS_0} A (-\bfs) \xrightarrow{\psi_0} \ks \rightarrow 0\, ,
\label{eq:NoetherRes_semigroup}
\end{equation}
where $\mS_i \subset \mS$ is a multiset for all $i\in \{0,\ldots,p\}$. Note that this multigrading is a refinement of the grading given by the weight vector $\omega = (\omega_1,\ldots,\omega_n)$, where $\omega_i = |\bfa_i| = \sum_{j=1}^d a_{ij} \in \Z^+$ for all $i \in \{1,\ldots,n\}$. Hence, $I_\mA$ is \whom{} and the results of Section~\ref{sec:groebner} apply here. As in that section, we fix the $\omega$-graded reverse lexicographic order $>_\omega$ in $R$.
\newline

Our goal here is to describe the resolution \eqref{eq:NoetherRes_semigroup} in terms of the combinatorics of the semigroup $\mS$ when $d=3$, i.e., when the Krull dimension of $\ks$ is $3$. 
We start by recalling from \cite{BGGM2017} the first step of the resolution for any $d\geq 1$.
We will later describe the multisets $\mS_1,\mS_2 \subset \mS$ that appear in the short resolution \eqref{eq:NoetherRes_semigroup} of $\ks$. This is a combinatorial transcription of results in \cite{Pison2003} and \cite{Ojeda2017} that will be useful in Section~\ref{sec:pruning}.

\begin{definition}
Let $\mS$ be a simplicial semigroup and denote by $\mE = \{\bfe_1,\ldots,\bfe_d\}$ the set of extremal rays of the rational cone spanned by $\mA$.
The Apery set of $\mS$ is \[\aps := \{\bfs \in \mS: \bfs - \bfe_i \notin \mS \text{ for all } i = 1,\ldots,d \}\, .\]
\end{definition}

\begin{proposition}[{\cite[Prop.~5]{BGGM2017}}]
The set $\mS_0$ in the short resolution \eqref{eq:NoetherRes_semigroup} is $\aps$, the Apery set of $\mS$. 
The $\mS$-graded $A$-module homomorphism $\psi_0: \oplus_{\bfs\in \mS_0} A(-\bfs) \rightarrow \ks$ is defined by $\psi_0(\bfeps_\bfs) = t^\bfs$, where $\{\bfeps_\bfs \mid \bfs \in \mS_0\}$ is the canonical basis of $\oplus_{\bfs \in \mS_0} A(-\bfs)$. 
\end{proposition}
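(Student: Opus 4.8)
The plan is to compute the multiset $\mS_0$ intrinsically rather than by unwinding the Gröbner‑theoretic description of $\mB_0$ in Proposition~\ref{prop:first_step}. Since the short resolution \eqref{eq:NoetherRes_semigroup} is a \emph{minimal} $\mS$‑graded free resolution over $A$, its shifts are uniquely determined, so $\mS_0$ is exactly the multiset of $\mS$‑degrees of a minimal system of generators of $\ks$ as an $A$‑module. Writing $\mathfrak{m}_A := \id{x_{n-d+1},\ldots,x_n}\subset A$ for the graded maximal ideal (so that $A/\mathfrak{m}_A = \Bbbk$), the graded Nakayama lemma identifies this multiset, with multiplicities, with the $\mS$‑graded support of the $\Bbbk$‑vector space $\ks/\mathfrak{m}_A\ks$: the number of minimal generators in $\mS$‑degree $\bfs$ equals $\dim_\Bbbk\bigl(\ks/\mathfrak{m}_A\ks\bigr)_\bfs$. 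This is compatible with, and refines degree by degree, Proposition~\ref{prop:first_step}.

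Next I would compute $\ks/\mathfrak{m}_A\ks$ directly from the embedding $A\hookrightarrow\ks$. Under it, $x_{n-d+i}\mapsto t^{\bfa_{n-d+i}}=t^{\bfe_i}$, so $\mathfrak{m}_A\ks=\sum_{i=1}^{d}t^{\bfe_i}\ks$ is the $\Bbbk$‑span of $\{\,t^\bfs\mid \bfs\in\bigcup_{i=1}^{d}(\bfe_i+\mS)\,\}$. Since $\ks=\bigoplus_{\bfs\in\mS}\Bbbk\,t^\bfs$ with each graded piece one‑dimensional, the quotient $\ks/\mathfrak{m}_A\ks$ has as a $\Bbbk$‑basis the residues $\overline{t^\bfs}$ for those $\bfs\in\mS$ with $\bfs-\bfe_i\notin\mS$ for all $i$, i.e. a basis indexed precisely by $\aps$, each $\mS$‑degree occurring with multiplicity one. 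Hence $\mS_0=\aps$ (and in particular $\aps$ is finite, which it must be since the Noether position hypothesis makes $\ks$ a finite $A$‑module).

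Finally, for the description of $\psi_0$ I would lift the monomial basis just obtained: for each $\bfs\in\aps$ the element $t^\bfs\in\ks$ maps onto $\overline{t^\bfs}$, so by Nakayama the family $\{\,t^\bfs\mid\bfs\in\aps\,\}$ generates $\ks$ over $A$, minimally because its residues span $\ks/\mathfrak{m}_A\ks$; thus one may take $\psi_0(\bfeps_\bfs)=t^\bfs$. I do not expect a serious obstacle: the only points needing attention are the uniqueness of the shifts (so that $\mS_0$ is well defined and computable via Nakayama) and the one‑dimensionality of the graded pieces of $\ks$ (which forces each $\bfs\in\aps$ to contribute a single generator). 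Should one prefer to translate $\mB_0$ from Proposition~\ref{prop:first_step} literally, the extra ingredient needed would be that for $\bfs\notin\aps$ the unique standard monomial of $\mS$‑degree $\bfs$ is divisible by some $x_{n-d+i}$, a fact that relies on the particular choice of the $\omega$‑graded reverse lexicographic order; the Nakayama route sidesteps this.
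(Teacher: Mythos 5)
Your Nakayama argument is correct and complete. The paper itself gives no proof --- it cites \cite[Prop.~5]{BGGM2017} --- but the route taken there (and visible in the companion Proposition~\ref{prop:first_step}, of which this is the $\mS$-graded specialization) is Gröbner-theoretic: one identifies a minimal generating set of $\ks$ over $A$ with the monomials of $R$ lying outside $\ini{I_\mA}+\id{x_{n-d+1},\ldots,x_n}$, and then matches those standard monomials with Apery elements via their $\mS$-degree. Your argument avoids Gröbner bases entirely: it invokes the graded Nakayama lemma --- applicable because $A_0=\Bbbk$, the degrees lie in the pointed cone $\N^d$, and $\mathfrak m_A=\id{x_{n-d+1},\ldots,x_n}$ is the graded maximal ideal --- together with the structural fact that $\ks$ has one-dimensional $\mS$-graded pieces, which makes $\ks/\mathfrak m_A\ks$ literally the $\Bbbk$-span of $\{\overline{t^\bfs}\mid \bfs\in\aps\}$ and forces each $\bfs\in\aps$ to contribute exactly one generator. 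What the Gröbner route buys is an explicit monomial lift $\mB_0\subset R$ that plugs directly into the algorithms of Section~\ref{sec:groebner}; what your route buys is brevity, independence of any term order, and the finiteness of $\aps$ as an immediate corollary of Noether position rather than a separate observation.
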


To compute the multidegrees in the next steps of the resolution, we consider, for every $\bfs \in \mS$, the abstract simplicial complex $T_\bfs$ defined by 
\[T_\bfs := \left\{ \mF \subset \mE : \bfs - \sum_{\bfe \in \mF} \bfe \in \mS \right\} \,.\]
In \cite[Prop.~2.1]{Pison2003} and \cite[Prop.~5.1]{Ojeda2017}, the authors prove that the number of syzygies of multidegree $\bfs$ at the $(i+1)$-th step of the minimal $\mS$-graded resolution \eqref{eq:NoetherRes_semigroup}
is $\dim_\k{\tilde{H}_i(T_\bfs)}$, where $\tilde{H}_i(-)$ denotes the $i$-th reduced homology $\Bbbk$-vector space of $T_\bfs$. 
\newline

If $\bfs \in \mS$ is such that $\bfs- \sum_{\bfe \in \mE} \bfe \in \mS$, then $T_\bfs$ is a simplex and $\dim_\k{\tilde{H}_i(T_\bfs)}=0$ for all $i\in\Z$. Hence, such an element $\bfs \in \mS$ does not belong to any of the multisets $\mS_i$ in \eqref{eq:NoetherRes_semigroup}. We are thus interested in the elements $\bfs \in \mS$ such that $\bfs- \sum_{\bfe \in \mE} \bfe \notin \mS$ which we will classify. When $d=3$, we now define four exceptional sets that will play a role in our combinatorial description. The rest of the elements in $\mS$, which will not contribute in any multiset $\mS_i$ in \eqref{eq:NoetherRes_semigroup}, appear in Table~\ref{tab:simp_complexes} as {\it Other configurations}.
Recall that, when $d=3$, $\mE = \{\bfe_1,\bfe_2,\bfe_3\}$. In the notation $E_\mS^{a,b}$ below, $a$ is the number of indices $i$, $1\leq i\leq 3$, such that $\bfs-\bfe_i \in \mS$ and $b$ is the number of pairs $(i,j)$ of indices,  $1\leq i < j\leq 3$, such that $\bfs-\bfe_i-\bfe_j \in \mS$ (according to this notation, the Apery set defined before would be $E_\mS^{0,0}$). Note that $0\leq b \leq a \leq 3$.

\begin{definition}
Let $\mS \subset \N^3$ be a simplicial semigroup.
We define the following subsets of $\mS$, which we call the \textit{exceptional sets of $\mS$}:
\begin{itemize}
    \item 
    $E_\mS^{3,1} = \{\bfs \in \mS \mid \bfs-\bfe_j \in \mS, \forall j; \, \bfs-(\bfe_{i_1}+\bfe_{i_2}) \in \mS, \bfs-(\bfe_{i_1}+\bfe_{i_3}) \notin \mS, \bfs-(\bfe_{i_2}+\bfe_{i_3}) \notin \mS, \text{ for a permutation } (i_1,i_2,i_3) \text{ of } (1,2,3)\}$;
    
    \item 
    $E_\mS^{2,0} = \{\bfs \in \mS \mid \bfs-\bfe_{i_1} \in \mS, \bfs-\bfe_{i_2} \in \mS, \bfs - \bfe_{i_3} \notin \mS ; \, \bfs-(\bfe_{i_1}+\bfe_{i_2}) \notin \mS, \text{ for a permutation } (i_1,i_2,i_3) \text{ of } (1,2,3)\}$;
    
    \item $E_\mS^{3,0} = \{\bfs \in \mS \mid \bfs - \bfe_i \in \mS, \forall i ; \, \bfs-(\bfe_i+\bfe_j) \notin \mS, \forall i\neq j\}$;
    
    \item $E_\mS^{3,3} = \{\bfs \in \mS \mid \bfs - \bfe_i \in \mS, \forall i ; \, \bfs-(\bfe_i+\bfe_j) \in \mS, \forall i\neq j; \, \text{and } \bfs-(\bfe_1+\bfe_2+\bfe_3) \notin \mS\}$.
\end{itemize}
\end{definition}

Figure~\ref{fig:aps_except} shows how elements in the Apery and the exceptional sets of $\mS$ look like. In those figures, filled circles represent elements in $\mS$, while empty squares represent elements outside $\mS$.

\begin{figure}
\centering
\begin{subfigure}[c]{0.3\linewidth}
\centering
\begin{tikzpicture}[scale=1.5]
  \draw[thin,->](0,0,0)-- (1,0,0) node[right=1mm]{$\mathbf{e}_1$};
  \draw[thin,->](0,0,0)--(0,1,0) node[right=1mm]{$\mathbf{e}_2$};
  \draw[thin,->](0,0,0)--(0,0,-1) node[right=1mm]{$\mathbf{e}_3$};
\end{tikzpicture}    
\end{subfigure}
\begin{subfigure}[c]{0.3\linewidth}
\centering
\begin{tikzpicture}[scale=1.5]
  \node at (1.25,1.1,0) {$\bfs$};
  \draw[thin](1,1,0)--(0,1,0)--(0,1,1)--(1,1,1)--(1,1,0)--(1,0,0)--(1,0,1)--(0,0,1)--(0,1,1);
  \draw[thin](1,1,1)--(1,0,1);
  \draw[thin,dashed](1,0,0)--(0,0,0)--(0,1,0);
  \draw[thin,dashed](0,0,0)--(0,0,1);
  \draw [red] plot [only marks, mark size=1.2, mark=square*, mark options = {fill=white}] coordinates {(1,1,1) (1,0,0) (0,1,0)};
  \draw [blue] plot [only marks, mark size=1.2, mark=*] coordinates {(1,1,0)};
\end{tikzpicture}
\caption{Element $\bfs \in \aps$.}
\end{subfigure}
\\[\bigskipamount]
\begin{subfigure}[b]{\linewidth}
\centering
\begin{tikzpicture}[scale=1.5]
\node at (1.25,1.1,0) {$\bfs$};
  \draw[thin](1,1,0)--(0,1,0)--(0,1,1)--(1,1,1)--(1,1,0)--(1,0,0)--(1,0,1)--(0,0,1)--(0,1,1);
  \draw[thin](1,1,1)--(1,0,1);
  \draw[thin,dashed](1,0,0)--(0,0,0)--(0,1,0);
  \draw[thin,dashed](0,0,0)--(0,0,1);
  \draw [red] plot [only marks, mark size=1.2, mark=square*, mark options = {fill=white}] coordinates {(0,1,1) (0,0,0)};
  \draw [blue] plot [only marks, mark size=1.2, mark=*] coordinates {(1,1,0) (0,1,0) (1,0,0) (1,1,1) (1,0,1)};
\end{tikzpicture}
\hspace{1.75cm}
\begin{tikzpicture}[scale=1.5]
\node at (1.25,1.1,0) {$\bfs$};
  \draw[thin](1,1,0)--(0,1,0)--(0,1,1)--(1,1,1)--(1,1,0)--(1,0,0)--(1,0,1)--(0,0,1)--(0,1,1);
  \draw[thin](1,1,1)--(1,0,1);
  \draw[thin,dashed](1,0,0)--(0,0,0)--(0,1,0);
  \draw[thin,dashed](0,0,0)--(0,0,1);
  \draw [red] plot [only marks, mark size=1.2, mark=square*, mark options = {fill=white}] coordinates {(0,1,1) (1,0,1)};
  \draw [blue] plot [only marks, mark size=1.2, mark=*] coordinates {(1,1,0) (0,1,0) (1,0,0) (1,1,1) (0,0,0)};
\end{tikzpicture}
\hspace{1.6cm}
\begin{tikzpicture}[scale=1.5]
\node at (1.25,1.1,0) {$\bfs$};
  \draw[thin](1,1,0)--(0,1,0)--(0,1,1)--(1,1,1)--(1,1,0)--(1,0,0)--(1,0,1)--(0,0,1)--(0,1,1);
  \draw[thin](1,1,1)--(1,0,1);
  \draw[thin,dashed](1,0,0)--(0,0,0)--(0,1,0);
  \draw[thin,dashed](0,0,0)--(0,0,1);
  \draw [red] plot [only marks, mark size=1.2, mark=square*, mark options = {fill=white}] coordinates {(0,0,0) (1,0,1)};
  \draw [blue] plot [only marks, mark size=1.2, mark=*] coordinates {(1,1,0) (0,1,0) (1,0,0) (1,1,1) (0,1,1)};
\end{tikzpicture}
\caption{Elements $\bfs \in E_\mS^{3,1}$.}
\end{subfigure}
\\[\bigskipamount]
\begin{subfigure}[b]{\linewidth}
\centering
\begin{tikzpicture}[scale=1.5]
\node at (1.25,1.1,0) {$\bfs$};
  \draw[thin](1,1,0)--(0,1,0)--(0,1,1)--(1,1,1)--(1,1,0)--(1,0,0)--(1,0,1)--(0,0,1)--(0,1,1);
  \draw[thin](1,1,1)--(1,0,1);
  \draw[thin,dashed](1,0,0)--(0,0,0)--(0,1,0);
  \draw[thin,dashed](0,0,0)--(0,0,1);
  \draw [red] plot [only marks, mark size=1.2, mark=square*, mark options = {fill=white}] coordinates {(0,1,1) (1,0,0)};
  \draw [blue] plot [only marks, mark size=1.2, mark=*] coordinates {(1,1,0) (0,1,0) (1,1,1)};
\end{tikzpicture}
\hspace{1.75cm}
\begin{tikzpicture}[scale=1.5]
\node at (1.25,1.1,0) {$\bfs$};
  \draw[thin](1,1,0)--(0,1,0)--(0,1,1)--(1,1,1)--(1,1,0)--(1,0,0)--(1,0,1)--(0,0,1)--(0,1,1);
  \draw[thin](1,1,1)--(1,0,1);
  \draw[thin,dashed](1,0,0)--(0,0,0)--(0,1,0);
  \draw[thin,dashed](0,0,0)--(0,0,1);
  \draw [red] plot [only marks, mark size=1.2, mark=square*, mark options = {fill=white}] coordinates {(0,0,0) (1,1,1)};
  \draw [blue] plot [only marks, mark size=1.2, mark=*] coordinates {(1,1,0) (0,1,0) (1,0,0)};
\end{tikzpicture}
\hspace{1.75cm}
\begin{tikzpicture}[scale=1.5]
\node at (1.25,1.1,0) {$\bfs$};
  \draw[thin](1,1,0)--(0,1,0)--(0,1,1)--(1,1,1)--(1,1,0)--(1,0,0)--(1,0,1)--(0,0,1)--(0,1,1);
  \draw[thin](1,1,1)--(1,0,1);
  \draw[thin,dashed](1,0,0)--(0,0,0)--(0,1,0);
  \draw[thin,dashed](0,0,0)--(0,0,1);
  \draw [red] plot [only marks, mark size=1.2, mark=square*, mark options = {fill=white}] coordinates {(0,1,0) (1,0,1)};
  \draw [blue] plot [only marks, mark size=1.2, mark=*] coordinates {(1,1,0) (1,1,1) (1,0,0)};
\end{tikzpicture}
\caption{Elements $\bfs \in E_\mS^{2,0}$.}
\end{subfigure}
\\[\bigskipamount]
\centering
\begin{subfigure}[b]{0.3\linewidth}
\centering
\begin{tikzpicture}[scale=1.5]
\node at (1.25,1.1,0) {$\bfs$};
  \draw[thin](1,1,0)--(0,1,0)--(0,1,1)--(1,1,1)--(1,1,0)--(1,0,0)--(1,0,1)--(0,0,1)--(0,1,1);
  \draw[thin](1,1,1)--(1,0,1);
  \draw[thin,dashed](1,0,0)--(0,0,0)--(0,1,0);
  \draw[thin,dashed](0,0,0)--(0,0,1);
  \draw [red] plot [only marks, mark size=1.2, mark=square*, mark options = {fill=white}] coordinates {(0,0,0) (0,1,1) (1,0,1)};
  \draw [blue] plot [only marks, mark size=1.2, mark=*] coordinates {(1,0,0) (0,1,0) (1,1,0) (1,1,1)};
\end{tikzpicture}
\caption{Element $\bfs \in E_\mS^{3,0}$.}
\end{subfigure}
\begin{subfigure}[b]{0.3\linewidth}
\centering
\begin{tikzpicture}[scale=1.5]
\node at (1.25,1.1,0) {$\bfs$};
  \draw[thin](1,1,0)--(0,1,0)--(0,1,1)--(1,1,1)--(1,1,0)--(1,0,0)--(1,0,1)--(0,0,1)--(0,1,1);
  \draw[thin](1,1,1)--(1,0,1);
  \draw[thin,dashed](1,0,0)--(0,0,0)--(0,1,0);
  \draw[thin,dashed](0,0,0)--(0,0,1);
  \draw [red] plot [only marks, mark size=1.2, mark=square*, mark options = {fill=white}] coordinates {(0,0,1)};
  \draw [blue] plot [only marks, mark size=1.2, mark=*] coordinates {(0,0,0) (1,0,0) (0,1,0) (1,1,0) (1,0,1) (0,1,1) (1,1,1)};
\end{tikzpicture}
\caption{Element $\bfs \in E_\mS^{3,3}$.}
\end{subfigure}
\hfill
\caption{Points in $\aps$ and the exceptional sets $E_\mS^{3,1}$, $E_\mS^{2,0}$, $E_\mS^{3,0}$, and $E_\mS^{3,3}$.}
\label{fig:aps_except}
\end{figure}

\begin{theorem} \label{thm:dim3_Si}
If $\ks$ is a simplicial semigroup ring of Krull dimension $d=3$, the multisets $\mS_0,\mS_1,\mS_2 \subset \mS$ that appear in the short resolution \eqref{eq:NoetherRes_semigroup} are 
\[\mS_0 = \aps, \qquad \mS_1 = E_\mS^{3,1} \cup E_\mS^{2,0} \cup E_\mS^{3,0} \cup E_\mS^{3,0}, \qquad  \mS_2 = E_\mS^{3,3} \, .\]
\end{theorem}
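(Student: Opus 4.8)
The plan is to reduce the statement to the homological formula recalled just before it: for each $\bfs\in\mS$, the multiplicity of $\bfs$ in the multiset $\mS_{i+1}$ equals $\dim_\k\tilde H_i(T_\bfs)$, where $T_\bfs=\{\mF\subseteq\mE:\bfs-\sum_{\bfe\in\mF}\bfe\in\mS\}$. Since $d=3$, the vertex set $\mE=\{\bfe_1,\bfe_2,\bfe_3\}$ has only three elements, so the whole argument becomes a finite inspection of the simplicial complexes on three vertices and their reduced homology.

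First I would record that $T_\bfs$ is genuinely an abstract simplicial complex on $\mE$: if $\bfs-\sum_{\bfe\in\mF}\bfe\in\mS$ and $\mG\subseteq\mF$, then adding back the generators $\bfe_i=\bfa_{n-d+i}\in\mS$ with $\bfe_i\in\mF\setminus\mG$ keeps us inside $\mS$, so $\mG\in T_\bfs$; and $\emptyset\in T_\bfs$ since $\bfs\in\mS$. Hence $T_\bfs$ is completely determined by its vertex set $V=\{i:\bfs-\bfe_i\in\mS\}$ (of cardinality $a$), by which edges $\{i,j\}$ are present, i.e.\ those with $\bfs-\bfe_i-\bfe_j\in\mS$ ($b$ of them), and by whether the $2$-face $\{1,2,3\}$ is present, i.e.\ $\bfs-\bfe_1-\bfe_2-\bfe_3\in\mS$. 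Downward-closedness forces any present edge to have both endpoints in $V$, and it forces the $2$-face to be present only when $a=b=3$ (and in that case $b=3$ automatically).

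Next I would go through the finitely many resulting complexes, using the notation $E_\mS^{a,b}$ (with $\aps=E_\mS^{0,0}$). If $a=0$ then $T_\bfs=\{\emptyset\}$, so $\dim\tilde H_{-1}=1$ and $\tilde H_i=0$ for $i\geq 0$. If $a=1$, $T_\bfs$ is a single vertex, hence acyclic. If $(a,b)=(2,0)$, it is two disjoint vertices and $\dim\tilde H_0=1$; if $(a,b)=(2,1)$, it is a segment, acyclic. If $(a,b)=(3,0)$, it is three disjoint vertices and $\dim\tilde H_0=2$; if $(a,b)=(3,1)$, it is a segment plus an isolated vertex and $\dim\tilde H_0=1$; if $(a,b)=(3,2)$, it is a path on three vertices, acyclic. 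If $(a,b)=(3,3)$ with the $2$-face absent, $T_\bfs$ is the boundary of a triangle, homeomorphic to $S^1$, so $\dim\tilde H_1=1$; and if $(a,b)=(3,3)$ with the $2$-face present, $T_\bfs$ is the full $2$-simplex, contractible (this last case was already dispatched before the statement). In every case $\tilde H_i(T_\bfs)=0$ for $i\geq 2$.

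Finally I would collect contributions. The $\bfs$ with $\tilde H_{-1}(T_\bfs)\neq 0$ are exactly those in $\aps$, each with multiplicity $1$, so $\mS_0=\aps$; those with $\tilde H_0(T_\bfs)\neq 0$ are exactly those in $E_\mS^{3,1}$ and $E_\mS^{2,0}$ with multiplicity $1$ and those in $E_\mS^{3,0}$ with multiplicity $2$, giving $\mS_1=E_\mS^{3,1}\cup E_\mS^{2,0}\cup E_\mS^{3,0}\cup E_\mS^{3,0}$ as a multiset; those with $\tilde H_1(T_\bfs)\neq 0$ are exactly those in $E_\mS^{3,3}$ with multiplicity $1$, so $\mS_2=E_\mS^{3,3}$; and as there is no reduced homology in degrees $\geq 2$, $\mS_i=\emptyset$ for $i\geq 3$ (equivalently ${\rm pd}_A\ks\leq d-1=2$, which also follows since $\ks$ is a domain and hence has positive depth). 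All remaining $\bfs\in\mS$ have acyclic $T_\bfs$ (a point, a segment, a path, or the full simplex) and so contribute to no $\mS_i$; these are the ``Other configurations'' of Table~\ref{tab:simp_complexes}. I do not expect a genuine obstacle: the mathematical content sits in the homological formula, which we may cite, and the rest is the bookkeeping above — the only points requiring care are that the list of complexes on three vertices is exhaustive and that $T_\bfs$ is genuinely downward-closed.
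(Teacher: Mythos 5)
Your proposal is correct and follows essentially the same route as the paper: both reduce to the cited Hochster-type formula for the multigraded Betti numbers in terms of $\dim_\k\tilde H_i(T_\bfs)$ and then exhaustively inspect the finitely many simplicial complexes on the three-element vertex set $\mE$ (the paper summarizes this inspection in Table~\ref{tab:simp_complexes}). Your version merely spells out the downward-closedness of $T_\bfs$ and the resulting complex-by-complex homology count in more detail than the paper's one-line appeal to the table.
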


\begin{proof}
We already know that $\mS_0 = \aps$. For any other $\bfs \in \mS$, the simplicial complex $T_\bfs$ is one of those in Table~\ref{tab:simp_complexes}, whose homologies are straightforward to compute. Then, the result follows from \cite[Prop.~2.1]{Pison2003} and \cite[Prop.~5.1]{Ojeda2017}
\end{proof}

\begin{table}
\begin{tabular}{cccc|cccc}
\toprule
$E_\mS^{3,1}$ & $E_\mS^{2,0}$ & $E_\mS^{3,0}$ & $E_\mS^{3,3}$ & \multicolumn{4}{c}{Other configurations}\\
\midrule
\begin{tikzpicture}[scale=1]
  \draw[thin](1,1,0)--(0,1,0)--(0,1,1)--(1,1,1)--(1,1,0)--(1,0,0)--(1,0,1)--(0,0,1)--(0,1,1);
  \draw[thin](1,1,1)--(1,0,1);
  \draw[thin,dashed](1,0,0)--(0,0,0)--(0,1,0);
  \draw[thin,dashed](0,0,0)--(0,0,1);
  \draw [red] plot [only marks, mark size=1.2, mark=square*, mark options = {fill=white}] coordinates {(0,1,1) (0,0,0)};
  \draw [blue] plot [only marks, mark size=1.2, mark=*] coordinates {(1,1,0) (0,1,0) (1,0,0) (1,1,1) (1,0,1)};
\end{tikzpicture}
& 
\begin{tikzpicture}[scale=1]
\draw[thin](1,1,0)--(0,1,0)--(0,1,1)--(1,1,1)--(1,1,0)--(1,0,0)--(1,0,1)--(0,0,1)--(0,1,1);
\draw[thin](1,1,1)--(1,0,1);
\draw[thin,dashed](1,0,0)--(0,0,0)--(0,1,0);
\draw[thin,dashed](0,0,0)--(0,0,1);
\draw [red] plot [only marks, mark size=1.2, mark=square*, mark options = {fill=white}] coordinates {(0,1,1) (1,0,0)};
\draw [blue] plot [only marks, mark size=1.2, mark=*] coordinates {(1,1,0) (0,1,0) (1,1,1)};
\end{tikzpicture}
& 
\begin{tikzpicture}[scale=1]
  \draw[thin](1,1,0)--(0,1,0)--(0,1,1)--(1,1,1)--(1,1,0)--(1,0,0)--(1,0,1)--(0,0,1)--(0,1,1);
  \draw[thin](1,1,1)--(1,0,1);
  \draw[thin,dashed](1,0,0)--(0,0,0)--(0,1,0);
  \draw[thin,dashed](0,0,0)--(0,0,1);
  \draw [red] plot [only marks, mark size=1.2, mark=square*, mark options = {fill=white}] coordinates {(0,0,0) (0,1,1) (1,0,1)};
  \draw [blue] plot [only marks, mark size=1.2, mark=*] coordinates {(1,0,0) (0,1,0) (1,1,0) (1,1,1)};
\end{tikzpicture}
&
\begin{tikzpicture}[scale=1]
  \draw[thin](1,1,0)--(0,1,0)--(0,1,1)--(1,1,1)--(1,1,0)--(1,0,0)--(1,0,1)--(0,0,1)--(0,1,1);
  \draw[thin](1,1,1)--(1,0,1);
  \draw[thin,dashed](1,0,0)--(0,0,0)--(0,1,0);
  \draw[thin,dashed](0,0,0)--(0,0,1);
  \draw [red] plot [only marks, mark size=1.2, mark=square*, mark options = {fill=white}] coordinates {(0,0,1)};
  \draw [blue] plot [only marks, mark size=1.2, mark=*] coordinates {(0,0,0) (1,0,0) (0,1,0) (1,1,0) (1,0,1) (0,1,1) (1,1,1)};
\end{tikzpicture}
&
\begin{tikzpicture}[scale=1]
  \draw[thin](1,1,0)--(0,1,0)--(0,1,1)--(1,1,1)--(1,1,0)--(1,0,0)--(1,0,1)--(0,0,1)--(0,1,1);
  \draw[thin](1,1,1)--(1,0,1);
  \draw[thin,dashed](1,0,0)--(0,0,0)--(0,1,0);
  \draw[thin,dashed](0,0,0)--(0,0,1);
  \draw [red] plot [only marks, mark size=1.2, mark=square*, mark options = {fill=white}] coordinates {(0,1,1) (1,1,1) (0,0,0) (1,0,0)};
  \draw [blue] plot [only marks, mark size=1.2, mark=*] coordinates {(0,1,0) (1,1,0)};
\end{tikzpicture}
&
\begin{tikzpicture}[scale=1]
  \draw[thin](1,1,0)--(0,1,0)--(0,1,1)--(1,1,1)--(1,1,0)--(1,0,0)--(1,0,1)--(0,0,1)--(0,1,1);
  \draw[thin](1,1,1)--(1,0,1);
  \draw[thin,dashed](1,0,0)--(0,0,0)--(0,1,0);
  \draw[thin,dashed](0,0,0)--(0,0,1);
  \draw [red] plot [only marks, mark size=1.2, mark=square*, mark options = {fill=white}] coordinates {(0,0,0) (1,0,0) (0,0,1) (1,0,1)};
  \draw [blue] plot [only marks, mark size=1.2, mark=*] coordinates {(0,1,0) (1,1,0) (0,1,1) (1,1,1) };
\end{tikzpicture}
&
\begin{tikzpicture}[scale=1]
  \draw[thin](1,1,0)--(0,1,0)--(0,1,1)--(1,1,1)--(1,1,0)--(1,0,0)--(1,0,1)--(0,0,1)--(0,1,1);
  \draw[thin](1,1,1)--(1,0,1);
  \draw[thin,dashed](1,0,0)--(0,0,0)--(0,1,0);
  \draw[thin,dashed](0,0,0)--(0,0,1);
  \draw [red] plot [only marks, mark size=1.2, mark=square*, mark options = {fill=white}] coordinates {(0,0,1) (1,0,1)};
  \draw [blue] plot [only marks, mark size=1.2, mark=*] coordinates {(0,1,0) (1,1,0) (0,1,1) (1,1,1) (0,0,0) (1,0,0)};
\end{tikzpicture}
&
\begin{tikzpicture}[scale=1]
  \draw[thin](1,1,0)--(0,1,0)--(0,1,1)--(1,1,1)--(1,1,0)--(1,0,0)--(1,0,1)--(0,0,1)--(0,1,1);
  \draw[thin](1,1,1)--(1,0,1);
  \draw[thin,dashed](1,0,0)--(0,0,0)--(0,1,0);
  \draw[thin,dashed](0,0,0)--(0,0,1);
  \draw [blue] plot [only marks, mark size=1.2, mark=*] coordinates {(0,1,0) (1,1,0) (0,1,1) (1,1,1) (0,0,0) (1,0,0) (0,0,1) (1,0,1)};
\end{tikzpicture}
\\
\midrule
\begin{tikzpicture}[scale=0.9]
\coordinate [label={[xshift=-2mm, yshift=-5mm]:$i_1$}] (i1) at (0,0);
\coordinate [label={[xshift=-2mm, yshift=-5mm]:$i_2$}] (i2) at (1,0);
\coordinate [label={[xshift=-2mm, yshift=0mm]:$i_3$}] (i3) at (0.5,0.866);
\draw [black] plot [only marks, mark size=1.2, mark=*] coordinates {(i1) (i2) (i3)};
\draw[thin] (i1) -- (i2);
\end{tikzpicture}
&
\begin{tikzpicture}[scale=0.9]
\coordinate [label={[xshift=-2mm, yshift=-5mm]:$i_1$}] (i1) at (0,0);
\coordinate [label={[xshift=-2mm, yshift=-5mm]:$i_2$}] (i2) at (1,0);
\draw [black] plot [only marks, mark size=1.2, mark=*] coordinates {(i1) (i2)};
\end{tikzpicture}
&
\begin{tikzpicture}[scale=0.9]
\coordinate [label={[xshift=-2mm, yshift=-5mm]:$i_1$}] (i1) at (0,0);
\coordinate [label={[xshift=-2mm, yshift=-5mm]:$i_2$}] (i2) at (1,0);
\coordinate [label={[xshift=-2mm, yshift=0mm]:$i_3$}] (i3) at (0.5,0.866);
\draw [black] plot [only marks, mark size=1.2, mark=*] coordinates {(i1) (i2) (i3)};
\end{tikzpicture}
&
\begin{tikzpicture}[scale=0.9]
\coordinate [label={[xshift=-2mm, yshift=-5mm]:$i_1$}] (i1) at (0,0);
\coordinate [label={[xshift=-2mm, yshift=-5mm]:$i_2$}] (i2) at (1,0);
\coordinate [label={[xshift=-2mm, yshift=0mm]:$i_3$}] (i3) at (0.5,0.866);
\draw [black] plot [only marks, mark size=1.2, mark=*] coordinates {(i1) (i2) (i3)};
\draw[thin] (i1) -- (i2) -- (i3) -- (i1);
\end{tikzpicture}
&
\begin{tikzpicture}[scale=0.9]
\coordinate [label={[xshift=-2mm, yshift=-5mm]:$i_1$}] (i1) at (0,0);
\draw [black] plot [only marks, mark size=1.2, mark=*] coordinates {(i1)};
\end{tikzpicture}
&
\begin{tikzpicture}[scale=0.9]
\coordinate [label={[xshift=-2mm, yshift=-5mm]:$i_1$}] (i1) at (0,0);
\coordinate [label={[xshift=-2mm, yshift=-5mm]:$i_2$}] (i2) at (1,0);
\draw [black] plot [only marks, mark size=1.2, mark=*] coordinates {(i1) (i2)};
\draw[thin] (i1) -- (i2);
\end{tikzpicture}
&
\begin{tikzpicture}[scale=0.9]
\coordinate [label={[xshift=-2mm, yshift=-5mm]:$i_1$}] (i1) at (0,0);
\coordinate [label={[xshift=-2mm, yshift=-5mm]:$i_2$}] (i2) at (1,0);
\coordinate [label={[xshift=-2mm, yshift=0mm]:$i_3$}] (i3) at (0.5,0.866);
\draw [black] plot [only marks, mark size=1.2, mark=*] coordinates {(i1) (i2) (i3)};
\draw[thin] (i1) -- (i2) -- (i1) -- (i3);
\end{tikzpicture}
&
\begin{tikzpicture}[scale=0.9]
\coordinate [label={[xshift=-2mm, yshift=-5mm]:$i_1$}] (i1) at (0,0);
\coordinate [label={[xshift=-2mm, yshift=-5mm]:$i_2$}] (i2) at (1,0);
\coordinate [label={[xshift=-2mm, yshift=0mm]:$i_3$}] (i3) at (0.5,0.866);
\fill[fill=gray!20] (i1) -- (i2) -- (i3);
\draw[thin] (i1) -- (i2) -- (i3) -- (i1);
\draw [black] plot [only marks, mark size=1.2, mark=*] coordinates {(i1) (i2) (i3)};
\end{tikzpicture}
\\
\bottomrule
\end{tabular}
\caption{Possible configurations of elements $\bfs \in \mS$ and the associated simplicial complexes $T_\bfs$.}
\label{tab:simp_complexes}
\end{table}

\begin{remark}
As a consequence of Theorem~\ref{thm:dim3_Si}, the sets $\aps$, $E_\mS^{3,1}$, $E_\mS^{2,0}$, $E_\mS^{3,0}$, and $E_\mS^{3,3}$ are finite subsets of $\mS$.
\end{remark}

The Apery and exceptional sets of $\mS$ determine the multigraded Hilbert series of $\ks$, as the following result shows.

\begin{corollary}
Let $\ks$ be a simplicial semigroup ring of Krull dimension $3$. The multigraded Hilbert series of $\ks$ is:
\[\hs{\ks}(\bft) = \dfrac{\sum_{\bfs \in \aps} \bft^\bfs - \sum_{\bfs \in E_\mS^{3,1}} \bft^\bfs - \sum_{\bfs \in E_\mS^{2,0}} \bft^\bfs - 2 \sum_{\bfs \in E_\mS^{3,0}} \bft^\bfs +  \sum_{\bfs \in E_\mS^{3,3}} \bft^\bfs}{(1-t_1^{\,\omega_{n-2}}) (1-t_2^{\,\omega_{n-1}}) (1-t_3^{\,\omega_n})} \, ,\]
where $\omega_{n-2} = |\bfe_1|$, $\omega_{n-1} = |\bfe_2|$ and $\omega_n = |\bfe_3|$.
\end{corollary}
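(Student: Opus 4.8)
The plan is to read the Hilbert series directly off the multigraded short resolution \eqref{eq:NoetherRes_semigroup}, whose shape in the case $d=3$ is completely described by Theorem~\ref{thm:dim3_Si}. The only general input needed is that the multigraded Hilbert series is additive along exact sequences of finitely generated $\Z^3$-graded $A$-modules. Applying this to the exact complex
\[0 \to \oplus_{\bfs \in \mS_2} A(-\bfs) \xrightarrow{\psi_2} \oplus_{\bfs \in \mS_1} A(-\bfs) \xrightarrow{\psi_1} \oplus_{\bfs \in \mS_0} A(-\bfs) \xrightarrow{\psi_0} \ks \to 0\]
gives $\hs{\ks}(\bft) = \sum_{i=0}^{2} (-1)^i \sum_{\bfs \in \mS_i} \hs{A(-\bfs)}(\bft)$. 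Here the $\mS_i$ are finite by the remark following Theorem~\ref{thm:dim3_Si}, so every module in sight is a finitely generated $\Z^3$-graded $A$-module and each Hilbert series is a well-defined rational function with denominator dividing $\prod_{i=1}^3 (1 - t_i^{\,\omega_{n-3+i}})$; note that any graded free resolution, not necessarily the minimal one, would do for this computation.

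Next I would compute $\hs{A}(\bft)$. By the normalization fixed in Section~\ref{sec:homology}, $A = \k[x_{n-2}, x_{n-1}, x_n]$ is a polynomial ring whose variables carry $\mS$-degrees $\deg_\mS(x_{n-3+i}) = \bfe_i = \omega_{n-3+i}\bfeps_i$, so
\[\hs{A}(\bft) = \frac{1}{(1 - t_1^{\,\omega_{n-2}})(1 - t_2^{\,\omega_{n-1}})(1 - t_3^{\,\omega_n})} \, ,\]
and, since $A(-\bfs)_{\bfu} = A_{\bfu - \bfs}$, we get $\hs{A(-\bfs)}(\bft) = \bft^{\bfs}\, \hs{A}(\bft)$ for every $\bfs \in \mS$. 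Substituting this into the alternating sum, the denominator factors out and we are left with
\[\hs{\ks}(\bft) = \Bigl( \sum_{\bfs \in \mS_0} \bft^{\bfs} - \sum_{\bfs \in \mS_1} \bft^{\bfs} + \sum_{\bfs \in \mS_2} \bft^{\bfs} \Bigr)\, \hs{A}(\bft) \, .\]

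The last step is pure bookkeeping: I would plug in $\mS_0 = \aps$, $\mS_2 = E_\mS^{3,3}$, and $\mS_1 = E_\mS^{3,1} \cup E_\mS^{2,0} \cup E_\mS^{3,0} \cup E_\mS^{3,0}$ from Theorem~\ref{thm:dim3_Si}, being careful that $\mS_1$ is a \emph{multiset} in which each element of $E_\mS^{3,0}$ is counted twice; this records the fact that $\dim_\k \tilde{H}_0(T_\bfs) = 2$ for the three-isolated-points complex $T_\bfs$ attached to such an $\bfs$. Hence $\sum_{\bfs \in \mS_1} \bft^{\bfs} = \sum_{\bfs \in E_\mS^{3,1}} \bft^{\bfs} + \sum_{\bfs \in E_\mS^{2,0}} \bft^{\bfs} + 2\sum_{\bfs \in E_\mS^{3,0}} \bft^{\bfs}$, which yields exactly the claimed numerator. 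There is no genuine obstacle in this argument; the only point demanding attention is keeping the multiplicity of $E_\mS^{3,0}$ correct, since it is the single multidegree class whose homology contributes more than one free generator.
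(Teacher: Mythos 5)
Your proposal is correct and follows essentially the same route as the paper: read the multigraded Hilbert series off the short resolution \eqref{eq:NoetherRes_semigroup}, factor out $\hs{A}(\bft) = \prod_{i=1}^3 (1-t_i^{\,\omega_{n-3+i}})^{-1}$, and substitute the description of $\mS_0, \mS_1, \mS_2$ from Theorem~\ref{thm:dim3_Si}, being careful that $E_\mS^{3,0}$ contributes with multiplicity two. The paper simply states the resulting formula without the intermediate justification you supply about additivity of Hilbert series and the shift computation $\hs{A(-\bfs)}(\bft) = \bft^{\bfs}\hs{A}(\bft)$, but these are exactly the steps implicitly invoked.
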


\begin{proof}
The multigraded Hilbert series of $\ks$ is given by 
\[\hs{\ks}(\bft) = \sum_{\bfs = (s_1,s_2,s_3) \in \mS} t_1^{s_1}t_2^{s_2}t_3^{s_3} = \dfrac{\sum_{\bfs \in \mS_0} \bft^\bfs - \sum_{\bfs \in \mS_1} \bft^\bfs + \sum_{\bfs \in \mS_2} \bft^\bfs}{(1-t_1^{\,\omega_{n-2}}) (1-t_2^{\,\omega_{n-1}}) (1-t_3^{\,\omega_n})} \, ,\]
and the result follows from Theorem~\ref{thm:dim3_Si}.
\end{proof}

As already observed at the beginning of this section, the ideal $I_\mA$ is \whom{} for the weight vector $\omega = (\omega_1,\ldots,\omega_n)$, where $\omega_i = |\bfa_i|$ for $i\in \{1,\ldots,n\}$. Therefore, the short resolution of $\ks$ with respect to this grading can be obtained from the multigraded one in a simple way as follows:
\[\mathcal{F}: 0 \rightarrow \oplus_{\bfs\in \mS_2} A (-|\bfs|) \xrightarrow{\psi_2} \oplus_{\bfs\in \mS_1} A(-|\bfs|) \xrightarrow{\psi_1} \oplus_{\bfs\in \mS_0} A (-|\bfs|) \xrightarrow{\psi_0} \ks \rightarrow 0\, .\]
Moreover, the weighted Hilbert series of $\ks$ is obtained from the multigraded one by the transformation $t_1^{a_1}t_2^{a_2}t_3^{a_3} \mapsto t^{a_1+a_2+a_3}$. \newline

\subsection{Projective monomial surfaces}

\ \vspace{1mm} 

Assume here that $d=3$ and that there exists $D\in \Z^+$, such that $|\bfa_i| = D$ for all $i=1,\ldots,n$. This implies that the toric ideal determined by $\mA$, $I_\mA$, is homogeneous for the standard grading (see, e.g., \cite[Lem.~4.14]{Sturm}). In particular, $\bfa_{n-2} = D\bfeps_1$, $\bfa_{n-1} = D\bfeps_2$, and $\bfa_{n} = D\bfeps_3$, where $\{\bfeps_1,\bfeps_2,\bfeps_3\}$ denotes the canonical basis of $\N^3$. 
If the field $\k$ is infinite, $I_\mA$ is the defining ideal of the projective monomial surface $\mX_\mA$, which is the Zariski closure of the set \[\{(t_1^{a_{11}}t_2^{a_{12}}t_3^{a_{13}}:\dots:t_1^{a_{i1}}t_2^{a_{i2}}t_3^{a_{i3}} : \dots : t_1^{a_{n1}}t_2^{a_{n2}}t_3^{a_{n3}}) \in \Pn{n-1}_{\Bbbk} \mid (t_1:t_2:t_3) \in \Pn{2}_{\Bbbk} \} \subset \Pn{n-1}_{\Bbbk} \, .\]
Therefore, the coordinate ring of $\mX_\mA$, $\k[\mX_\mA] = \kx/I_\mA$ is isomorphic to the semigroup algebra $\ks$. Since $\mX_\mA \subset \Pn{n-1}_\Bbbk$ is a projective surface, the Krull dimension of $\k[\mX_\mA]$ is $3$, so we can apply the results in the previous sections. \newline

For every $s\in \N$, denote by $s\mA$ the $s$-fold sumset of $\mA$, i.e., $0\mA = \{\mathbf{0}\}$, and for $s\geq 1$, \[s\mA := \{ \bfa_{i_1}+\bfa_{i_2}+\dots+\bfa_{i_s} : 1\leq i_1\leq i_2 \leq \dots \leq i_s \leq n \} \, .\]

\begin{proposition}[{\cite[Prop.~3.3]{Colarte2022}}]
The Hilbert function of $\k[\mX_\mA]$ is given by $\hf{\k[\mX_\mA]}(s) = |s\mA|$, for all $s\in \N$.
\end{proposition}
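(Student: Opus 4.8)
The plan is to compute the Hilbert function directly from the toric parametrization, working one graded component at a time. Recall that $\k[\mX_\mA]=\kx/I_\mA$ and that $I_\mA=\ker\varphi_\mA$, where $\varphi_\mA\colon\kx\to\kt$ is the $\k$-algebra homomorphism sending $x_i\mapsto\bft^{\bfa_i}$. Since $|\bfa_i|=D$ for all $i$, the ideal $I_\mA$ is homogeneous for the standard grading on $\kx$, so $(\kx/I_\mA)_s\cong(\kx)_s/(I_\mA)_s$ for every $s\in\N$, and $(I_\mA)_s=I_\mA\cap(\kx)_s=\ker\big(\varphi_\mA|_{(\kx)_s}\big)$.

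Fix $s$. The space $(\kx)_s$ has as $\k$-basis the monomials $\bx^\alpha$ with $|\alpha|=\alpha_1+\dots+\alpha_n=s$, and the restriction $\varphi_\mA|_{(\kx)_s}$ sends $\bx^\alpha\mapsto\bft^{\alpha_1\bfa_1+\dots+\alpha_n\bfa_n}$. By the first isomorphism theorem,
\[
\hf{\k[\mX_\mA]}(s)=\dim_\k(\kx)_s-\dim_\k(I_\mA)_s=\dim_\k\varphi_\mA\big((\kx)_s\big)\,,
\]
so it remains to identify this image and count its dimension.

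The image $\varphi_\mA\big((\kx)_s\big)$ is the $\k$-span of the monomials $\bft^{\bfv}$ with $\bfv=\alpha_1\bfa_1+\dots+\alpha_n\bfa_n$ for some $\alpha\in\N^n$ with $|\alpha|=s$. Choosing such an $\alpha$ is the same as choosing a size-$s$ multiset of indices $1\le i_1\le\dots\le i_s\le n$, and the corresponding exponent is $\bfv=\bfa_{i_1}+\dots+\bfa_{i_s}$; hence the set of exponent vectors that occur is exactly the $s$-fold sumset $s\mA$. Since distinct monomials of $\kt$ are $\k$-linearly independent, $\{\bft^{\bfv}\mid\bfv\in s\mA\}$ is a $\k$-basis of $\varphi_\mA\big((\kx)_s\big)$, so $\dim_\k\varphi_\mA\big((\kx)_s\big)=|s\mA|$, which yields $\hf{\k[\mX_\mA]}(s)=|s\mA|$ for all $s\in\N$.

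There is no genuine obstacle here; the argument is essentially bookkeeping, and the result could also be quoted verbatim from \cite[Prop.~3.3]{Colarte2022}. The one point deserving care is the combinatorial translation in the last step: matching the exponent vectors $\alpha_1\bfa_1+\dots+\alpha_n\bfa_n$ produced by degree-$s$ monomials with the sumset $s\mA$ as defined, and invoking linear independence of monomials so that it is the number of \emph{distinct} such sums, not the number of monomials $\bx^\alpha$, that computes the dimension.
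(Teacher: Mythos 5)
Your argument is correct. Note that the paper does not reprove this statement at all: it simply cites \cite[Prop.~3.3]{Colarte2022}, so there is no ``paper proof'' to compare against. Your direct computation is sound and self-contained: since $|\bfa_i|=D$ for all $i$, the ideal $I_\mA=\ker\varphi_\mA$ is homogeneous for the standard grading, the degree-$s$ piece of the kernel is $\ker\bigl(\varphi_\mA|_{(\kx)_s}\bigr)$, and the first isomorphism theorem reduces the computation to the dimension of the image of $(\kx)_s$ under $\varphi_\mA$. That image is spanned by the distinct monomials $\bft^\bfv$ with $\bfv$ ranging over the $s$-fold sumset $s\mA$, and linear independence of distinct monomials in $\kt$ gives the count $|s\mA|$. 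The one step deserving the care you give it is indeed the last one: it is the number of distinct exponent vectors, not the number of monomials $\bx^\alpha$ of degree $s$, that gives the dimension, and this is exactly what the definition of $s\mA$ as a set (not a multiset) records.
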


Since $\mA$ is contained in the plane $\{(x,y,z) \in \N^3 : x+y+z = D\}$, for all $s\geq 0$,  the $s$-fold sumset of $\mA$ is also contained in a plane,
\[ s\mA \subset \{(x,y,z) \in \N^3: x+y+z = sD\} \, .\] 
For every $s\in \N$, set $H_s := \{(x,y,z) \in \N^3: x+y+z = sD\}$, and for every subset $F$ of $\mS$, set $F_s := F \cap H_s$. Moreover, if $F\subset \mS$ is a finite subset, we define the number $m(F) \in \N$ by \[m(F) := \max\{s\in \N: F_s \neq \emptyset\} \, .\]

In the case of the Apery set of $\mS$, instead of writing $(\aps)_s$, we just write $\AP_s$, and so we do with the exceptional sets. 

\begin{proposition}
For all $s\in \N$,
\[ |\AP_s| = \left( |s\mA|-3|(s-1)\mA|+3|(s-2)\mA|-|(s-3)\mA| \right) + |E_s^{3,1}| + |E_s^{2,0}| + 2|E_s^{3,0}| - |E_s^{3,3}| \, .\]
\end{proposition}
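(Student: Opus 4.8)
The plan is to read the asserted identity off the multigraded Hilbert series of $\ks$, specialized to the total‑degree grading, using the combinatorial description of the short resolution provided by Theorem~\ref{thm:dim3_Si}.

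First note that we are in the setting of that theorem: the hypothesis $|\bfa_i|=D$ for all $i$ forces $\bfa_{n-2}=D\bfeps_1$, $\bfa_{n-1}=D\bfeps_2$, $\bfa_n=D\bfeps_3$, so $\mS$ is a $3$‑dimensional simplicial semigroup, $A$ is a Noether normalization of $\ks$, and $\omega_{n-2}=\omega_{n-1}=\omega_n=D$. Hence the Corollary above expressing the multigraded Hilbert series of $\ks$ applies, and in it one substitutes $t_1=t_2=t_3=t$. On the left, $\hs{\ks}(\bft)$ becomes $\sum_{\bfs\in\mS}t^{|\bfs|}$; since $\mS\cap H_s=s\mA$ for every $s$ (an element of $\mS$ of total weight $sD$ is a sum of exactly $s$ of the $\bfa_i$, equivalently $|\mS\cap H_s|=\hf{\k[\mX_\mA]}(s)=|s\mA|$ by \cite[Prop.~3.3]{Colarte2022}), this equals $\sum_{s\geq 0}|s\mA|\,t^{sD}$. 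On the right, grouping each sum $\sum_{\bfs\in F}t^{|\bfs|}$, with $F$ one of $\aps$, $E_\mS^{3,1}$, $E_\mS^{2,0}$, $E_\mS^{3,0}$, $E_\mS^{3,3}$, according to the value of $|\bfs|$ turns it into $\sum_{s\geq 0}|F_s|\,t^{sD}$, where $F_s=F\cap H_s$. This yields
\[ \sum_{s\geq 0}|s\mA|\,t^{sD}=\frac{\sum_{s\geq 0}\bigl(|\AP_s|-|E_s^{3,1}|-|E_s^{2,0}|-2|E_s^{3,0}|+|E_s^{3,3}|\bigr)\,t^{sD}}{(1-t^D)^3}\,. \]

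To conclude, clear the denominator, set $u=t^D$, expand $(1-u)^3=\sum_{k=0}^3(-1)^k\binom{3}{k}u^k$, and compare the coefficient of $u^s$ on both sides, with the convention $|m\mA|=0$ for $m<0$. This gives
\[ |s\mA|-3|(s-1)\mA|+3|(s-2)\mA|-|(s-3)\mA|=|\AP_s|-|E_s^{3,1}|-|E_s^{2,0}|-2|E_s^{3,0}|+|E_s^{3,3}|\,, \]
and moving every term on the right except $|\AP_s|$ to the left is precisely the claimed formula.

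I do not expect a genuine obstacle: the statement is just a bookkeeping consequence of Theorem~\ref{thm:dim3_Si}, and the coefficient $2$ attached to $E_\mS^{3,0}$ — coming from the doubled union $E_\mS^{3,0}\cup E_\mS^{3,0}$ in the description of $\mS_1$ — is already built into the multigraded Hilbert series Corollary being invoked. The only points that need care are the degree conventions: a point of $H_s$ has total degree $sD$, the specialized denominator is $(1-t^D)^3$ because $\omega_{n-2}=\omega_{n-1}=\omega_n=D$, and the change of variable $u=t^D$ goes together with the convention $|m\mA|=0$ for negative $m$. Alternatively, one can bypass the multigraded Corollary and argue directly from the $\omega$‑graded short resolution $0\to\oplus_{\bfs\in\mS_2}A(-|\bfs|)\to\oplus_{\bfs\in\mS_1}A(-|\bfs|)\to\oplus_{\bfs\in\mS_0}A(-|\bfs|)\to\ks\to0$ associated with Theorem~\ref{thm:dim3_Si}, reading off $\hs{\ks}(t)=\bigl(\sum_{i=0}^2(-1)^i\sum_{\bfs\in\mS_i}t^{|\bfs|}\bigr)/(1-t^D)^3$ and proceeding as above; this is the semigroup analogue of Proposition~\ref{prop:deg_shortres}(b).
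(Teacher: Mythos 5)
Your argument is correct and is essentially the paper's: the paper proves this by citing Theorem~\ref{thm:dim3_Si} together with Proposition~\ref{prop:deg_shortres}(b), which is exactly the ``alternative'' route you sketch in your last sentence, while your main derivation simply reaches the same coefficient comparison by specializing the multigraded Hilbert series corollary at $t_1=t_2=t_3=t$. Both are bookkeeping readings of the Euler characteristic of the short resolution, so there is nothing further to add.
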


\begin{proof}
This is a direct consequence of Theorem~\ref{thm:dim3_Si} and Proposition~\ref{prop:deg_shortres}~\ref{prop:deg_shortres_b}.
\end{proof}

Since $I_\mA$ is a (standard graded) homogeneous ideal, the short resolution of $\k[\mX_\mA]$ with respect to the standard grading is 
\[\mathcal{F}: 0 \rightarrow \oplus_{\bfs\in \mS_2} A (-|\bfs|/D) \xrightarrow{\psi_2} \oplus_{\bfs\in \mS_1} A(-|\bfs|/D) \xrightarrow{\psi_1} \oplus_{\bfs\in \mS_0} A (-|\bfs|/D) \xrightarrow{\psi_0} \k[\mX_\mA] \rightarrow 0\, ,\]
and hence, the Castelnuovo-Mumford regularity of $\k[\mX_\mA]$ is
\begin{equation}
\reg{\k[\mX_\mA]} = \max \left( \left\{ \frac{|\bfs|}{D} : \bfs\in \mS_0 \right\} \cup \left\{ \frac{|\bfs|}{D}-1 : \bfs\in \mS_1 \right\} \cup \left\{ \frac{|\bfs|}{D}-2 : \bfs\in \mS_2 \right\} \right) \, ,
\label{eq:CMreg_hom}
\end{equation}
and the Hilbert series of $\k[\mX_\mA]$ is obtained from the multigraded Hilbert series by applying the transformation $t_1^{a_1}t_2^{a_2}t_3^{a_3} \mapsto t^{(a_1+a_2+a_3)/D}$.
\newline

As a direct consequence of Theorem~\ref{thm:dim3_Si} and  \eqref{eq:CMreg_hom}, we obtain the following formula for the Castelnuovo-Mumford regularity of the projective monomial surface $\mX_\mA$ in terms of the Apery and the exceptional sets of $\mS$.
{It is an analogue of the formula given in \cite[Thm.~3.7]{GG2023} for the Castelnuono-Mumford regularity of projective monomial curves.

\begin{theorem} \label{thm:CMreg_monsurf}
The Castelnuovo-Mumford regularity of $\k[\mX_\mA]$ is
    \[\reg{\k[\mX_\mA]} = \max \left\{m(\AP_\sg),m(E_\sg^{3,1})-1,m(E_\sg^{2,0})-1,m(E_\sg^{3,0})-1,m(E_\sg^{3,3})-2 \right\} \, .\]
\end{theorem}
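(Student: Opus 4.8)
The plan is to obtain the formula by combining the description of the short resolution in Theorem~\ref{thm:dim3_Si} with the expression \eqref{eq:CMreg_hom} for the regularity, so almost all of the work has already been done. The one point that needs a short argument is the translation between ``the maximum of $|\bfs|/D$ over a homogeneous component'' and ``the number $m(\cdot)$'' attached to the corresponding subset of $\mS$.

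First I would record the elementary observation that, since $|\bfa_i| = D$ for every $i$, any $\bfs \in \mS$ written as $\bfs = \sum_i \lambda_i \bfa_i$ with $\lambda_i \in \N$ satisfies $|\bfs| = \bigl(\sum_i \lambda_i\bigr) D$; hence $|\bfs|/D \in \N$ and $\bfs$ lies in exactly one of the planes $H_s$, namely $s = |\bfs|/D$. Consequently, for any nonempty finite $F \subset \mS$ one has $\{\, s \in \N : F_s \neq \emptyset \,\} = \{\, |\bfs|/D : \bfs \in F \,\}$, and therefore
\[ \max\{\, |\bfs|/D : \bfs \in F \,\} = m(F) \, . \]
If $F = \emptyset$ one adopts the convention $m(F) = -\infty$, in which case $F$ simply contributes nothing to the right-hand side of \eqref{eq:CMreg_hom}; the global maximum is in any case attained because $\aps$ always contains $\mathbf{0}$.

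Then I would substitute $\mS_0 = \aps$, $\mS_1 = E_\mS^{3,1} \cup E_\mS^{2,0} \cup E_\mS^{3,0} \cup E_\mS^{3,0}$ and $\mS_2 = E_\mS^{3,3}$ from Theorem~\ref{thm:dim3_Si} into \eqref{eq:CMreg_hom}, split the three braces in \eqref{eq:CMreg_hom} according to the partition of $\mS_1$ into its exceptional pieces (the repeated occurrence of $E_\mS^{3,0}$ in the multiset $\mS_1$ is harmless, since a maximum does not see multiplicities), and apply the displayed identity to each of $\aps$, $E_\mS^{3,1}$, $E_\mS^{2,0}$, $E_\mS^{3,0}$, $E_\mS^{3,3}$; this is legitimate because these sets are finite by the Remark following Theorem~\ref{thm:dim3_Si}. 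The result is exactly
\[\reg{\k[\mX_\mA]} = \max\left\{\, m(\aps),\ m(E_\mS^{3,1})-1,\ m(E_\mS^{2,0})-1,\ m(E_\mS^{3,0})-1,\ m(E_\mS^{3,3})-2 \,\right\} .\]
There is no genuine obstacle in this proof; the only things to be stated explicitly are that $|\bfs|/D$ is the integer labelling the plane $H_s$ containing $\bfs$, and the convention for the possibly empty exceptional sets.
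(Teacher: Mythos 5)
Your proposal is correct and takes exactly the same route as the paper, which presents the theorem as a ``direct consequence'' of Theorem~\ref{thm:dim3_Si} and \eqref{eq:CMreg_hom} without further proof. You have merely made explicit the two small points the paper leaves tacit: that $|\bfs|/D$ is precisely the index $s$ of the plane $H_s$ containing $\bfs$ (so $\max\{|\bfs|/D : \bfs\in F\} = m(F)$), and the convention $m(\emptyset)=-\infty$ for the possibly empty exceptional sets.
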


\section{Pruning algorithm for simplicial toric rings of dimension 3} \label{sec:pruning}

Consider now, as in Section~\ref{sec:homology}, the toric ideal $I_\mA$ defined by $\mA = \{\bfa_1,\ldots,\bfa_n\} \subset \N^3$, the generating set of a simplicial semigroup $\mS$, and assume without loss of generality that the last three generators are the extremal rays of the rational cone spanned by $\mA$.
Setting $R:=\kx$ and $I=:I_\mA$, one has that $R/I$ is a simplicial toric ring of dimension 3. Moreover, for $A=:\k[x_{n-2},x_{n-1},x_n]$ and $\omega := (\omega_1,\ldots,\omega_n) \in \N^n$ with $\omega_i = |\bfa_i|$ for all~$i$, $1\leq i\leq n$, one has that $I$ is $\omega$-homogeneous and $A$ is a Noether normalization of $R/I$, so the results in Section~\ref{sec:groebner} apply.
Our aim in this section is to build the Schreyer resolution and explicitly prune it in order to build directly the short resolution of $R/I$ in this case.
\newline

Let $\mathcal G$ be the reduced Gr\"obner basis of $I$ with respect to $>_\omega$, the $\omega$-graded reverse lexicographic order. It is known that the elements in $\mathcal G$ are binomials.
Take $\mB_0$ the set of monomials not belonging to $\ini{I} + \langle x_{n-2}, x_{n-1}, x_n \rangle.$ Consider  $\chi: R\rightarrow R$ the evaluation morphism defined by $\chi(x_i) = x_i$ for $i \in \{1,\ldots,n-3\}$ and $\chi(x_j) = 1$ for $j \in \{n-2, n-1,n\}$, and set $J$ the extension of $\ini{I}$ by $\chi$. Now, for every $u \in \mB_0 \cap J$,  $G(I_u)$ denotes the minimal monomial generating set of 
\[I_u := \left( \ini{I} : u \right) \cap \k[x_{n-2},x_{n-1},x_n]  \, .\]
Since the generators of $\ini{I}$ do not involve the variable $x_n$ because the ideal $I$ is prime and $>_\omega$ is a reverse lexicographic order, every element in $G(I_u)$ is a monomial of the form $x_{n-2}^a x_{n-1}^b$ with $a,b \in \N$. Denote $\ell_u := |G(I_u)|$ and write $G(I_u) = \{M_{(u,1)},\ldots,M_{(u,{\ell_u})}\}$, where the elements of $G(I_u)$ are sorted lexicographically, i.e., $M_{(u,1)} >  \cdots >  M_{(u,\ell_u)}$ with respect to the lexicographic order $x_n > x_{n-1} > x_{n-2}$. 
Now consider the set of monomials \[ \mB_1' = \{u M_{(u,i)}  \mid u \in \mB_0 \cap J , 1 \leq i \leq \ell_u \}.\] 
For each $\bx^{\alpha} = u  M_{(u,i)} \in \mB_1'$,  where $u \in \mB_0 \cap J$ and $M_{(u,i)} = x_{n-2}^a x_{n-1}^b \in G(I_u)$, we take $r_\alpha$ the remainder of the division of $\bx^{\alpha}$ by $\mG$. Since $\mG$ consists of binomials and $M_{(u,i)} \in G(I_u)$, then $r_\alpha = x_{n-2}^{a'} x_{n-1}^{b'} x_n^{c'} v$ for some $a',b',c' \in \N$ such that $\gcd(M_{(u,i)}, x_{n-2}^{a'} x_{n-1}^{b'}) = 1$ and some $v \in \mB_0$.  
By Theorem~\ref{them:GBschreyer}, the set \[ \mH = \{\bfh_{(u,i)} := M_{(u,i)} \cdot \bfeps_u - x_{n-2}^{a'} x_{n-1}^{b'} x_{n-1}^{c'} \cdot \bfeps_v \mid u \in \mB_0 \cap J , 1 \leq i \leq \ell_u \}\] 
is the reduced Gr\"obner basis for the Schreyer-like monomial order $>_{\rm SL}$ in Definition~\ref{def:SLorder}, and $\ini{\bfh_{(u,i)}} = M_{(u,i)} \cdot \bfeps_u$
by Remark~\ref{rem:initial}. Applying Schreyer's Theorem, one gets that the syzygies of $\mathcal H$ are obtained by reducing the $S$-polynomials of all pairs of elements in $\mH$ by $\mH$. Note that only $S$-polynomials of the form 
$S(\bfh_{(u,i)}, \bfh_{(u,j)})$ with $u \in \mB_0 \cap J$ and $1 \leq i < j \leq \ell_u$ must be considered and reduced since the other $S$-polynomials are zero. Furthermore, since the monomials $M_{(u,i)}$ only involve variables $x_{n-2}$ and $x_{n-1}$ and have been lexicographically sorted, $M_{(u,1)}>\ldots>M_{(u,\ell_u)}$, we only need to consider the reductions of the $S$-polynomials $S(\bfh_{(u,i)}, \bfh_{(u,i+1)})$ with $u \in \mB_0 \cap J$ and $1 \leq i < \ell_u$ since the other ones will be discarded when the resulting Gr\"obner basis of the syzygy module is made minimal. 
This implies that the initial terms of the resulting syzygies are pure powers of $x_{n-2}$ located in different copies of $A$, and hence the module of syzygies of $\mH$ obtained by applying Schreyer's Theorem is free. The Schreyer resolution of $R/I$ has thus at most two steps, and it shows as follows:
{\small
\begin{equation}
0 \rightarrow \oplus_{v\in \mB_2'} A(-\degw (v)) \xrightarrow{\psi_2'}  \oplus_{v\in \mB_1'} A(-\degw (v)) \xrightarrow{\psi_1'} \oplus_{v\in \mB_0} A(-\degw (v)) \xrightarrow{\psi_0} R/I \rightarrow 0 \, .
\label{eq:Schreyer_dim3}
\end{equation}}

Algorithm~\ref{alg:dim3} below takes advantage of the previous discussion and builds $\mB_0$, $\mB_1'$, and $\mB_2'$, the sets of monomials in $R$ involved in the above resolution.
It is worth pointing out that this algorithm involves only a Gr\"obner basis computation and Gr\"obner-free manipulations with monomial ideals. It has been implemented in the function {\tt schreyerResDim3} of \cite{github_shortres}.

\begin{algorithm} 
\caption{Computation of the sets $\mB_i'$ for a simplicial toric ring of dim. $3$.}
\label{alg:dim3}
\begin{flushleft}
    \textbf{Input:} $I \subset R = \kx$ a simplicial toric ideal of dimension $3$ with variables in Noether position. \\
    \textbf{Output:} The sets of monomials $\mB_0,\mB_1',\mB_2' \subset R$ involved in the Schreyer resolution \eqref{eq:Schreyer_dim3} of $R/I$ as $A$-module, $A = \k[x_{n-2},x_{n-1},x_n]$.
\end{flushleft}
\begin{algorithmic}[1]
\State $\mB_0$ $\gets$ monomial $\k$-basis of $R / \ini{I}+\id{x_{n-2},x_{n-1},x_n}$ for the degrevlex order $>_\omega$.
\State $J \gets \chi \left( \ini{I} \right) . R$, where $\chi: R \rightarrow R$ is defined by $\chi(x_i) = x_i$ for $i \in \{1,\ldots,n-3\}$, and $\chi(x_{n-2}) =\chi(x_{n-1})=\chi(x_{n})= 1$.
\State $I_u \gets \left( \ini{I} : u \right) \cap A$, $\forall u \in \mB_0 \cap J$.
\State $G(I_u) \gets$ minimal generating set of $I_u$, $\forall u \in \mB_0 \cap J$; $G(I_u) = \{M_{(u,1)},\ldots,M_{(u, \ell_u)}\}$ ordered lexicographically with $x_n > x_{n-1} > x_{n-2}$.
\State $\mB_1' \gets \{ u \cdot M_{(u,i)} \mid u \in \mB_0 \cap J, 1 \leq i \leq \ell_u\}$.
\State $L_u \gets \{ \lcm(M_{(u,i)},M_{(u,i+1)}) \mid 1 \leq i < \ell_u\}, \ \forall u \in \mB_0 \cap J$ such that $\ell_u \geq 2$.
\State $\mB_2' \gets \{ u \cdot M \mid u\in \mB_0 \cap J , \ell_u \geq 2, \text{ and } M \in L_u \}$.
\end{algorithmic}
\end{algorithm}

As Examples~\ref{ex:noprincipalnominimal} and ~\ref{ex:toric_nomin} show, even when $R/I$ is a $3$-dimensional simplicial toric ring, the resolution \eqref{eq:Schreyer_dim3} might not be minimal.

\begin{example} \label{ex:toric_nomin}
Set $R := \Q[x_1,\ldots,x_6]$, and let $I$ be the toric ideal determined by $\mA = \{(7,2,3),(1,8,3),(3,8,1),(12,0,0),(0,12,0),(0,0,12)\}$.
One has that $I$ is a homogeneous toric ideal and $A = \Q[x_4,x_5,x_6]$ is a Noether normalization of $R/I$, hence $R/I$ is a 3-dimensional simplicial toric ring. Applying Algorithm~\ref{alg:dim3} we obtain that $|\mB_0| = 204$, $|\mB_1'| = 174$ and $|\mB_2'| = 42$. However, the Betti diagram of the short resolution, obtained by using the function {\tt shortRes} of \cite{github_shortres}, is the following:
\newpage

\begin{verbatim}
           0     1     2
------------------------
    0:     1     -     -
    1:     3     -     -
    2:     6     1     -
    3:    10     3     -
    4:    15     6     -
    5:    21    10     -
    6:    26    15     -
    7:    29    20     -
    8:    32    26     1
    9:    29    26     2
   10:    20    19     2
   11:     9     9     1
   12:     2     2     -
   13:     1     1     -
------------------------
total:   204   138     6
\end{verbatim}
\end{example}

Our next aim is thus to minimalize Schreyer's resolution~\eqref{eq:Schreyer_dim3} using the results from Section~\ref{sec:homology}. We will show how to obtain subsets $\mB_1 \subset \mB_1'$ and $\mB_2 \subset \mB_2'$, such that $\mB_1$ and $\mB_2$ provide the actual shifts that appear in the short resolution of $R/I$. We will refer to this process as {\it pruning} the resolution. Note that, by Proposition~\ref{prop:deg_shortres}~\ref{prop:deg_shortres_a},
\[e(R/I) = |\mB_0| - |\mB_1| + |\mB_2| = |\mB_0| - |\mB_1'| + |\mB_2'|\] and, in particular, $|\mB_1'\setminus \mB_1|= |\mB_2' \setminus \mB_2|$. \newline

In the process of pruning the resolution, we will use the following result several times.

\begin{proposition}  
\label{prop:pertencesemig}Let $\mS = \langle \bfa_1,\ldots,\bfa_n \rangle \subset \N^d$ be an affine semigroup and $\bfb, \bfc \in \mS$. Write $\bfb = \sum_{i = 1}^n \beta_i \bfa_i$ and $\bfc = \sum_{i = 1}^n \gamma_i \bfa_i$ with $\beta_i, \gamma_i \in \N$ and consider the monomials $\mathbf{x}^{\beta} := x_1^{\beta_{1}} \cdots x_n^{\beta_{n}}$ and $\mathbf{x}^{\gamma} := x_1^{\gamma_{1}} \cdots x_n^{\gamma_{n}} \in \k[x_1,\ldots,x_n]$. Then, $\bfb - \bfc \in \mS$ if and only if $\bx^{\beta} \in I_\mA + \langle \bx^{\gamma} 
 \rangle$. 
\end{proposition}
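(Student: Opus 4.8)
The plan is to transport the statement through the isomorphism $R/I_\mA \cong \ks$ of $\mS$-graded $\k$-algebras, under which $x_i \mapsto t^{\bfa_i}$ and hence $\bx^\beta \mapsto t^{\bfb}$ and $\bx^\gamma \mapsto t^{\bfc}$. Since $I_\mA \subseteq I_\mA + \id{\bx^\gamma}$ and $I_\mA$ is the kernel of the quotient map, $\bx^\beta \in I_\mA + \id{\bx^\gamma}$ holds if and only if the image of $\bx^\beta$ in $R/I_\mA$ lies in the image of $\id{\bx^\gamma}$; and the image of $\id{\bx^\gamma}$ under this surjection is the principal ideal $t^{\bfc}\,\ks$ of $\ks$. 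Thus the claim reduces to showing that $t^{\bfb} \in t^{\bfc}\,\ks$ if and only if $\bfb - \bfc \in \mS$.

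For this reduced statement, one implication is immediate: if $\bfb - \bfc \in \mS$, then $t^{\bfb} = t^{\bfc}\,t^{\bfb-\bfc} \in t^{\bfc}\,\ks$. For the converse, I would write $t^{\bfb} = t^{\bfc} g$ with $g = \sum_{\bfs \in \mS} c_\bfs\, t^{\bfs}$ a finite $\k$-linear combination, and compare both sides in the $\k$-basis $\{t^\bfs \mid \bfs \in \mS\}$ of $\ks$: exactly one coefficient $c_\bfs$ is nonzero, and it satisfies $\bfc + \bfs = \bfb$, so $\bfb - \bfc = \bfs \in \mS$.

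Alternatively, the argument can be carried out directly inside $R$ using $\mS$-homogeneity, which may be preferable if one wants to keep everything on the ring side. Both $I_\mA$ and $\id{\bx^\gamma}$ are $\mS$-homogeneous ideals, so a membership relation $\bx^\beta = p + q$ with $p \in I_\mA$ and $q \in \id{\bx^\gamma}$ can be taken with $p$ and $q$ both $\mS$-homogeneous of $\mS$-degree $\bfb$ (pass to the $\bfb$-graded component of an arbitrary representation). If $q = 0$ then $\bx^\beta \in I_\mA$, which is impossible because the toric ideal $I_\mA$ is prime and contains no monomial (indeed $\varphi_\mA(\bx^\alpha) = \bft^{\degs{\bx^\alpha}} \neq 0$). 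If $q \neq 0$, every monomial $\bx^\gamma \bx^\nu$ occurring in $q$ has $\mS$-degree $\bfb$, i.e. $\bfc + \degs{\bx^\nu} = \bfb$, so $\bfb - \bfc = \degs{\bx^\nu} \in \mS$.

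No genuine obstacle is expected here; the only points requiring care are that the representation $\bx^\beta = p + q$ may indeed be chosen $\mS$-homogeneous (which uses that both $I_\mA$ and $\id{\bx^\gamma}$ are $\mS$-graded), and the use of the fact that $I_\mA$ contains no monomial to exclude the degenerate case $q = 0$ in the second argument, or equivalently, in the first argument, that the monomials $t^\bfs$ are $\k$-linearly independent in $\ks$.
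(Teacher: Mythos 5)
Both of your arguments are correct. Your first argument is essentially the paper's: the paper transports the membership problem through the isomorphism $R/I_\mA \cong \ks$ (packaging it via the third isomorphism theorem to get $\Psi: R/(I_\mA + \langle\bx^\gamma\rangle) \to \ks/\langle\bft^\bfc\rangle$) and then reads off the answer from the monomial $\k$-basis of $\ks/\langle\bft^\bfc\rangle$; your version reduces to $\bft^\bfb \in \bft^\bfc\ks$ and compares coefficients in the monomial basis of $\ks$ directly, which is the same idea with one less quotient.

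Your second argument, staying inside $R$, is a genuinely different and arguably cleaner route. Instead of invoking the semigroup ring at all, you use that $I_\mA$ and $\langle\bx^\gamma\rangle$ are $\mS$-graded so a membership $\bx^\beta = p + q$ can be taken $\mS$-homogeneous of degree $\bfb$; then the degenerate case $q = 0$ is ruled out because the prime toric ideal $I_\mA$ contains no monomial, and in the nondegenerate case any monomial $\bx^\gamma\bx^\nu$ occurring in $q$ forces $\bfb - \bfc = \degs{\bx^\nu} \in \mS$. This avoids quotient constructions entirely and makes the homogeneity mechanism explicit, which is pedagogically nicer; the paper's route via $\ks$ has the mild advantage of displaying the combinatorial description of the quotient's monomial basis, $\{\bft^\bfd : \bfd\in\mS,\ \bfd-\bfc\notin\mS\}$, which is a reusable fact. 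Both are complete and correct; no gaps.
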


\begin{proof}
We know that $R / I_\mA$ and $\k[\mS]$ are isomorphic as graded $\k$-algebras, and denote by $\tilde{\varphi}$ the corresponding graded isomorphism. Now, consider the ideal $\langle \mathbf{t}^{\bfc} \rangle$ of $\k[\mS]$, and the canonical projection map $\pi: \k[\mS] \rightarrow \k[\mS]/\langle \bft^\bfc\rangle$.
Since $\tilde{\varphi}(\mathbf{x}^{\gamma})= \mathbf{t}^{\bfc}$, we have that  $\ker(\pi \circ \tilde{\varphi}) = (I_\mA + \langle \mathbf{x}^{\gamma}\rangle) / I_\mA$. Thus, by the third isomorphism theorem, there is a graded isomorphism of $\k$-algebras
\[
\Psi: \mathbb \k[\mathbf{x}] / (I_\mA + \langle \mathbf{x}^{\gamma}\rangle) \longrightarrow \mathbb \k[\mS]/ \langle \mathbf{t}^{\bfc} \rangle .
\] 

Moreover,  $\k[\mS]/ \langle \mathbf{t}^{\bf{c}} \rangle$ has a unique monomial basis, which is $\{\mathbf{t}^{\bf{d}} \, \vert \, \bf{d} \in \mS\ {\rm and }\ \bf{d} - \bf{c} \notin \mS\}$. Finally, observe that the image of a monomial by $\Psi$ is a monomial, and hence \[ \bx^{\beta}\in   I_\mA + \langle \bx^{\gamma} \rangle \Longleftrightarrow  \Psi(\bx^{\beta}) = 0 \Longleftrightarrow \bfb - \bfc \in \mS\]
and we are done.
\end{proof}

To achieve our goal, consider the subset $C \subset \mB_1'$ defined by $C = \{v\cdot x_{n-1}^b \in \mB_1' \mid v\in \mB_0 \text{ and } b\geq 2\}$. The following result shows that the elements in $\mB_1' \setminus \mB_1$ belong to $C$.

\begin{lemma} \label{lemma:C}
$\mB_1' \setminus \mB_1 \subset C$. 
\end{lemma}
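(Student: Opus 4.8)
## Proof Plan for Lemma~\ref{lemma:C}

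The plan is to show that any element of $\mB_1'$ that is \emph{not} of the form $v\cdot x_{n-1}^b$ with $v\in\mB_0$ and $b\ge 2$ must in fact survive to $\mB_1$, i.e., it corresponds to a minimal generator of $\ker(\psi_0)$ and hence to a genuine shift of the short resolution. Equivalently, I want to rule out the possibility that such an element contributes a ``non-minimal'' syzygy that cancels when the Schreyer resolution \eqref{eq:Schreyer_dim3} is minimalized. The natural tool is the combinatorial description of the multisets $\mS_0,\mS_1$ from Theorem~\ref{thm:dim3_Si}, combined with the explicit form of the elements $\bfh_{(u,i)}=M_{(u,i)}\cdot\bfeps_u - x_{n-2}^{a'}x_{n-1}^{b'}x_n^{c'}\cdot\bfeps_v$ from Section~\ref{sec:pruning}.

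First I would recall that each $\bx^\alpha=u\,M_{(u,i)}\in\mB_1'$ is $\mS$-homogeneous of some multidegree $\bfs\in\mS$, and that the corresponding generator $\bfh_{(u,i)}$ of $\ker(\psi_0)$ has the same $\mS$-degree $\bfs$. The key point is to translate, via Proposition~\ref{prop:pertencesemig}, the membership relations ``$u\in\mB_0\cap J$'' and ``$M_{(u,i)}\in G(I_u)$'' into statements about $\bfs$ and the extremal rays $\bfe_1=\bfa_{n-2}$, $\bfe_2=\bfa_{n-1}$, $\bfe_3=\bfa_{n}$: namely, that $\bfs$ lies in one of the exceptional sets $E_\mS^{3,1}$, $E_\mS^{2,0}$, $E_\mS^{3,0}$, $E_\mS^{3,3}$ of Theorem~\ref{thm:dim3_Si}. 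Since the total number of elements of $\mB_1'$ of a given multidegree $\bfs$ is $|\{u\in\mB_0\cap J : M_{(u,i)}\in G(I_u)\text{ with }uM_{(u,i)}\text{ of degree }\bfs\}|$, while $|(\mS_1)_{\bfs}| = \dim_\k\tilde H_0(T_\bfs)$ equals the multiplicity prescribed by Theorem~\ref{thm:dim3_Si}, pruning removes exactly the excess; an element $\bx^\alpha$ is removed precisely when $T_{\bfs}$ has smaller $\tilde H_0$ than the naive count suggests. I would then show: if $\bx^\alpha = u\,M_{(u,i)}$ with $M_{(u,i)}$ \emph{not} a pure power $x_{n-1}^b$ with $b\ge 2$, then $\bx^\alpha$ is not in the excess, i.e.\ $\bx^\alpha\in\mB_1$.

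Concretely, the analysis splits by the shape of $M_{(u,i)}=x_{n-2}^a x_{n-1}^b$ (recall no $x_n$ appears, by primeness and the reverse-lex order). The cases are: $a\ge 1$; $a=0$ and $b=1$; and $a=0$ and $b\ge 2$ (which is exactly $C$). In the first two cases I would argue that the initial term $\ini{\bfh_{(u,i)}}=M_{(u,i)}\bfeps_u$ cannot cancel against anything during minimalization, because a cancellation would force one of the reduced $S$-polynomials $S(\bfh_{(u,i)},\bfh_{(u,i+1)})$ (the only relevant ones, per the discussion in Section~\ref{sec:pruning}) to have a unit coefficient, and the $\mS$-degree bookkeeping together with Theorem~\ref{thm:dim3_Si} forbids this: a unit in a reduced $S$-polynomial at degree $\bfs$ would reduce $|(\mS_1)_\bfs|$ below the value $|E_\mS^{3,1}\cup E_\mS^{2,0}\cup E_\mS^{3,0}\cup E_\mS^{3,0}|$ guaranteed there, contradicting exactness of the minimal resolution. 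The degree-$\bfs$ analysis is cleanest when phrased through $T_\bfs$: if $M_{(u,i)}$ involves $x_{n-2}$ at all, then $\bfs-\bfe_1\in\mS$, which combined with $u\in\mB_0\cap J$ (so $\bfs\notin\aps$, $\bfs-\bfe_j\notin\mS$ for the relevant $j$ coming from $\chi$) pins down $T_\bfs$ to a configuration where the Schreyer count already matches $\dim_\k\tilde H_0(T_\bfs)$, so no pruning occurs at that generator.

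The main obstacle I anticipate is the careful translation between the Gröbner-basis bookkeeping (which $u$'s and which $M_{(u,i)}$'s occur, and what the reduced $S$-polynomials look like) and the simplicial-complex count of Theorem~\ref{thm:dim3_Si}: I must verify that the \emph{number} of elements of $\mB_1'$ in each multidegree, minus the number genuinely needed, is always accounted for by elements of $C$, and never by a generator whose $M$-part involves $x_{n-2}$ or equals $x_{n-1}$. I would handle this by going through the finitely many configurations of $T_\bfs$ listed in Table~\ref{tab:simp_complexes} for $\bfs$ with $\bfs-\bfe_1\in\mS$ or $\bfs-\bfe_2\in\mS$ but $\bfs-(\bfe_1+\bfe_2+\bfe_3)\notin\mS$, and in each case matching the integer $\dim_\k\tilde H_0(T_\bfs)$ against the count of $u\in\mB_0\cap J$ producing that $\bfs$, showing the discrepancy lives in $C$. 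A secondary subtlety is that $C$ is defined using the \emph{monomial} $x_{n-1}$, i.e.\ the second extremal ray, so the argument is not symmetric in $\bfe_1,\bfe_2,\bfe_3$; this asymmetry reflects the lexicographic ordering $x_n>x_{n-1}>x_{n-2}$ imposed on $G(I_u)$ and on the reverse-lex order, and I would make sure every place where I invoke ``the last variable'' or ``the smallest variable'' is consistent with that choice.
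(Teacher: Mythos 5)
Your plan takes a genuinely different route from the paper's. The paper proves Lemma~\ref{lemma:C} by a direct Gr\"obner computation: it uses that, since $>_\omega$ is a (weighted) degrevlex order with $x_n$ smallest and $\ini{I_\mA}$ does not involve $x_n$, the remainder $r_{(u,i)}$ of $u\,M_{(u,i)}$ by $\mG$ is necessarily a multiple of $x_n$ whenever $M_{(u,i)}$ involves $x_{n-1}$; therefore, if both remainders in a consecutive pair $(\bfh_{(u,i)},\bfh_{(u,i+1)})$ are $x_n$-multiples, the whole $S$-polynomial factors out $x_n$ and its reduction by $\mH$ never produces a unit (because the leading terms of $\mH$ live in $\k[x_{n-2},x_{n-1}]$); and in the only remaining case the leading term of $\bar\psi_0(S)$ is $-x_{n-1}^{b}\bfeps_v$ with $b\ge 2$, so the one $\bfh_\gamma$ that can receive a unit corresponds to $v\,x_{n-1}^{b}\in C$. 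No appeal to $T_\bfs$, Table~\ref{tab:simp_complexes}, or Theorem~\ref{thm:dim3_Si} is needed.

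Your proposal, in contrast, leans on the multidegree combinatorics of Theorem~\ref{thm:dim3_Si}, and here there is a real gap. The central sentence, ``a unit in a reduced $S$-polynomial at degree $\bfs$ would reduce $|(\mS_1)_\bfs|$ below the value \ldots\ guaranteed there, contradicting exactness of the minimal resolution,'' confuses $\mS_1'$ with $\mS_1$. A unit coefficient is exactly how the non-minimal resolution gets pruned down from multiplicity $|(\mS_1')_\bfs|$ to the minimal multiplicity $|(\mS_1)_\bfs|=\dim_\k\tilde H_0(T_\bfs)$; this happens precisely when $|(\mS_1')_\bfs|>|(\mS_1)_\bfs|$ and contradicts nothing. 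To make your route work you would have to prove that for every $\bx^\alpha\in\mB_1'\setminus C$ the degree $\bfs=\degs{\bx^\alpha}$ has $|(\mS_1')_\bfs|=|(\mS_1)_\bfs|$. That is precisely the \emph{Claim} inside the proof of Theorem~\ref{thm:prunB1}, and the paper establishes it \emph{using} Lemma~\ref{lemma:C}; so if you want to go this way you must re-derive the Claim independently. This is possible (one can show directly, using the degrevlex properties and Proposition~\ref{prop:pertencesemig}, that $\degs{\bx^\alpha}\in E_\mS^{3,1}\cup E_\mS^{2,0}\cup E_\mS^{3,0}$ for each $\bx^\alpha\in\mB_1'\setminus C$, and then run the degree-collision analysis of the Claim), but your sketch neither does this case analysis nor flags the circularity risk. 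A secondary point: the criterion for $\bfh_{(u,i)}$ to be pruned is that some entry of $\psi_2'$ in the $(u,i)$-th row is a unit, not that its initial term ``cancels''; the phrasing matters because the initial term of $\bfh_{(u,i)}$ is $M_{(u,i)}\bfeps_u$, which has nothing to do with $x_n$-divisibility of the $S$-polynomial reduction, the real engine of the paper's argument.
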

\begin{proof}
Consider $\bx^\alpha \in \mB_1' \setminus \mB_1$ and denote by $\bfh_\alpha$ the corresponding element of $\mH$.
Since $\bx^\alpha \notin \mB_1$, there exist  $u \in \mB_0 \cap J$ and $1 \leq i < \ell_u$ such that there appears a nonzero constant multiplying $\bfh_\alpha$ in the reduction of $S(\bfh_{(u,i)},\bfh_{(u,i+1)})$ by $\mH$.
If $\bfh_{(u,i)} = x_{n-2}^{a_1}x_{n-1}^{b_1} \bfeps_u-x_{n}^{c_1} \bfeps_v$ and $\bfh_{(u,i+1)} = x_{n-2}^{a_2}x_{n-1}^{b_2} \bfeps_u -x_{n}^{c_2} \bfeps_w$, for some $v,w\in \mB_0$, $a_i,b_i,c_i \in \N$ ($i=1,2$) with $c_1,c_2\geq 1$, $a_1<a_2$, and $b_1>b_2$, then 
\[S(\bfh_{(u,i)},\bfh_{(u,i+1)}) = x_{n-2}^{a_2-a_1} \bfh_{(u,i)} - x_{n-1}^{b_1-b_2} \bfh_{(u,i+1)} = x_n \left( x_{n-1}^{b_1-b_2} x_n^{c_2-1} \bfeps_w - x_{n-2}^{a_2-a_1} x_n^{c_1-1} \bfeps_v \right) \, . \]
Hence, the reduction of $S(\bfh_{(u,i)},\bfh_{(u,i+1)})$ by $\mH$ does not involve nonzero constants.

Therefore, $\bfh_{(u,i)} = x_{n-2}^{a_1} \bfeps_u - x_{n-1}^{b_1} \bfeps_v$, and $\bfh_{(u,i+1)} = x_{n-2}^{a_2}x_{n-1}^{b_2} \bfeps_u -x_n^c \bfeps_w$, for some $v,w\in \mB_0$ and $a_1,a_2,b_1,b_2,c \in \N$ with $a_1,b_1,b_2,c \geq 1$, $a_2 <a_1$ and $b_1+b_2=b \geq 2$. Hence, 
\[S(\bfh_{(u,i)},\bfh_{(u,i+1)}) = x_{n-1}^{b_2} \bfh_{(u,i)} -x_{n-2}^{a-a'} \bfh_{(u,i+1)} = - x_{n-1}^{b} \bfeps_v + x_n \left( x_{n-2}^{a-a'}x_n^{c-1} \bfeps_w \right) \, ,\]
and since there appears a nonzero constant in the reduction of $S(\bfh_{(u,i)},\bfh_{(u,i+1)})$, one has that $x_{n-1}^{b} \in G(I_v)$, where $I_v = \left( \ini{I} : v \right) \cap A$. Thus, $\bx^\alpha = vx_{n-1}^{b}\in~C$.
\end{proof}

\begin{remark}
As a direct consequence of the previous result, if $C = \emptyset$, then $\mB_1' = \mB_1$, $\mB_2' = \mB_2$, and hence the Schreyer resolution \eqref{eq:Schreyer_dim3} is already minimal.
\end{remark}

The inclusion $\mB_1' \setminus \mB_1 \subset C$ can be strict or not. In fact, if $C \neq \emptyset$, both cases $\mB_1' = \mB_1$ and $\mB_1'\setminus\mB_1 = C$ can happen, as the following examples show.

\begin{example}
In this example, computations are performed over the field $\Q$.
\begin{enumerate}
\item Set $\mA := \{(1,0,3),(3,0,1),(0,1,3),(3,1,0),(0,3,1),(1,3,0),(4,0,0),(0,4,0)$, $(0,0,4)\}$, and let $I$ be the toric ideal determined by $\mA$. Applying Algorithm~\ref{alg:dim3}, one gets that $|\mB_0| = 28$, $|\mB_1'| = 18$, and $|\mB_2'| = 6$. In this case, $\mB_1 = \mB_1'$ although $|C| = 3$ since the Betti diagram of the short resolution given by Algorithm~\ref{alg:short_res} is
\begin{verbatim}
           0     1     2
------------------------
    0:     1     -     -
    1:     6     -     -
    2:    12     3     -
    3:     6     6     -
    4:     3     9     6
------------------------
total:    28    18     6
\end{verbatim}
    
\item If $\mA$ is the set in Example~\ref{ex:toric_nomin}, $|\mB_1'| = 174$, $|\mB_1| = 138$ and $|C| = 36$, so $\mB_1'\setminus\mB_1 = C$.
\end{enumerate}
\end{example}

For each $\bx^\beta \in C$, denote by $r_\beta$ the remainder of $\bx^\beta$ by the reduced Gröbner basis of $I$ for the $\omega$-graded reverse lexicographic order $>_\omega$ in $R$. Since $x_{n-1}$ divides $\bx^\beta$, then $r_\beta$ is a multiple of $x_n$.
Consider the partition $C = C_1 \sqcup C_2$, where
\[\begin{split}
C_1 &= \{ \bx^\beta \in C \mid r_\beta = wx_{n-2}^a x_n^c, \text{ for some } a,c\geq 1, w \in \mB_0 \}\, , \text{ and} \\
C_2 &= \{ \bx^\beta \in C \mid r_\beta = w x_n^c, \text{ for some } c\geq 1, w \in \mB_0 \} \, .
\end{split}\]

We now show that one can decide whether a monomial $\bx^\beta \in C$ is in $\mB_1$ or not just by looking at its $\mS$-degree. More precisely, it suffices to check if $\degs{\bx^\beta} = \sum_{i=1}^n \beta_i \bfa_i$ appears as a shift in the first step of the short resolution, and this happens if and only if $\degs{\bx^\beta} \in E_\mS^{3,1} \cup E_\mS^{2,0} \cup E_\mS^{3,0}$ by Theorem~\ref{thm:dim3_Si}. In Theorem~\ref{thm:prunB1}, we characterize when the latter holds in terms of some monomials that may belong to the ideal
$I_\mA + \langle x_{n-2} \rangle$ or not.
We will use the following easy lemma.
As in Section \ref{sec:homology}, set $\bfe_i := \bfa_{n-3+i}$ for all $i\in \{1,2,3\}$ and $\mE := \{\bfe_1,\bfe_2,\bfe_3\}$.

\begin{lemma} \label{lemma:6}
Let $\bx^\beta = vx_{n-1}^b \in C$ and set $\bfs = \degs{\bx^\beta}$.
\begin{enumerate}[(1)]
    \item\label{lemma:6_1} If $\bx^\beta \in C_1$, then \[\bfs-\bfe_i \in \mS, \forall i=1,2,3; \, \bfs-(\bfe_1+\bfe_3) \in \mS; \, \text{and } \,\bfs-(\bfe_2+\bfe_3) \notin \mS \, .\]
    \item\label{lemma:6_2} If $\bx^\beta \in C_2$, then \[\bfs-\bfe_2 \in \mS; \, \bfs-\bfe_3 \in \mS; \, \text{and } \, \bfs-(\bfe_2+\bfe_3) \notin \mS \, .\]
\end{enumerate}
\end{lemma}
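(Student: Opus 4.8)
The plan is to unwind the definitions of $C_1$ and $C_2$ and then use Proposition~\ref{prop:pertencesemig} to translate divisibility statements about monomials into membership statements in $\mS$. Throughout, write $\bx^\beta = v x_{n-1}^b$ with $v \in \mB_0$ and $b \geq 2$, and set $\bfs = \degs{\bx^\beta} = \degs{v} + b\,\bfe_2$. Recall that $\bfe_1 = \bfa_{n-2}$ corresponds to the variable $x_{n-2}$, $\bfe_2 = \bfa_{n-1}$ to $x_{n-1}$, and $\bfe_3 = \bfa_n$ to $x_n$. The remainder $r_\beta$ of $\bx^\beta$ modulo $\mG$ represents the same class in $R/I \cong \ks$, so $\degs{r_\beta} = \bfs$ as well, and $r_\beta$ is a monomial (since $\mG$ consists of binomials, division of a monomial yields a monomial).

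First I would prove \ref{lemma:6_1}. Suppose $\bx^\beta \in C_1$, so $r_\beta = w\,x_{n-2}^a x_n^c$ with $a,c \geq 1$ and $w \in \mB_0$. Then $\bfs = \degs{w} + a\,\bfe_1 + c\,\bfe_3$, and since $a \geq 1$ we get $\bfs - \bfe_1 = \degs{w} + (a-1)\bfe_1 + c\,\bfe_3 \in \mS$; since $c \geq 1$ we get $\bfs - \bfe_3 \in \mS$ similarly; and since $a, c \geq 1$ we get $\bfs - (\bfe_1 + \bfe_3) = \degs{w} + (a-1)\bfe_1 + (c-1)\bfe_3 \in \mS$. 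For $\bfs - \bfe_2 \in \mS$, use instead that $\bx^\beta = v x_{n-1}^b$ with $b \geq 2 \geq 1$, so $\bfs - \bfe_2 = \degs{v} + (b-1)\bfe_2 \in \mS$. The only nontrivial assertion is $\bfs - (\bfe_2 + \bfe_3) \notin \mS$. By Proposition~\ref{prop:pertencesemig} applied to $\bfb = \bfs$ written via the exponent vector of $r_\beta$ (the monomial $w x_{n-2}^a x_n^c$) and $\bfc = \bfe_2 + \bfe_3$ (the monomial $x_{n-1}x_n$), this is equivalent to $w\,x_{n-2}^a x_n^c \notin I_\mA + \langle x_{n-1}x_n\rangle$, equivalently $x_{n-1} w\,x_{n-2}^a x_n^{c-1} \notin I_\mA + \langle x_n \rangle$ after dividing by $x_n$... more carefully: I would instead argue directly that if $\bfs - (\bfe_2+\bfe_3) \in \mS$ then $\bx^\beta = v x_{n-1}^b$ would reduce modulo $\mG$ to a monomial divisible by $x_{n-1}$ or would force $r_\beta$ to be divisible by $x_{n-1}$, contradicting $C_1 \subset C$ and the fact (noted in the text) that $r_\beta$ is a multiple of $x_n$ but, by construction of $\mB_1'$ via $G(I_u)$, is coprime to the $x_{n-2}, x_{n-1}$ part of $M_{(u,i)}$. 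The cleanest route is: $x_{n-1}$ does not divide $r_\beta$ (this is exactly the $\gcd(M_{(u,i)}, x_{n-2}^{a'}x_{n-1}^{b'})=1$ condition from the construction of $\mH$, since here $M_{(u,i)}$ is a pure power of $x_{n-1}$, forcing $b' = 0$); hence $r_\beta = w x_{n-2}^a x_n^c$ with $x_{n-1} \nmid w$, and then $\bfs - (\bfe_2 + \bfe_3) \in \mS$ would give, via Proposition~\ref{prop:pertencesemig}, $w x_{n-2}^a x_n^c \in I_\mA + \langle x_{n-1}x_n \rangle$; since $x_n \mid w x_{n-2}^a x_n^c$, dividing through (using primality of $I_\mA$, so $x_n$ is a nonzerodivisor mod $I_\mA$) yields $w x_{n-2}^a x_n^{c-1} \in I_\mA + \langle x_{n-1}\rangle$, i.e.\ $\degs{w} + a\bfe_1 + (c-1)\bfe_3 - \bfe_2 \in \mS$; but the left side is $\bfs - \bfe_2 - \bfe_3$ expressed... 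I would then note this contradicts $r_\beta$ being the remainder (a monomial not in $\ini I$, with $x_{n-1} \nmid r_\beta$) — spelling this out is the main obstacle and requires care with the reverse-lex / primality arguments.

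Then \ref{lemma:6_2} is the analogous but easier statement: if $\bx^\beta \in C_2$ then $r_\beta = w x_n^c$ with $c \geq 1$, so $\bfs = \degs{w} + c\,\bfe_3$ gives $\bfs - \bfe_3 \in \mS$ immediately, and $\bfs - \bfe_2 \in \mS$ follows from $\bx^\beta = v x_{n-1}^b$ with $b \geq 2$ as before. For $\bfs - (\bfe_2 + \bfe_3) \notin \mS$: again $x_{n-1} \nmid r_\beta$ (same $\gcd$ argument), so $x_{n-1}\nmid w$; if $\bfs - (\bfe_2+\bfe_3) \in \mS$ then by Proposition~\ref{prop:pertencesemig}, $w x_n^c \in I_\mA + \langle x_{n-1} x_n\rangle$, and dividing by $x_n$ (nonzerodivisor) gives $w x_n^{c-1} \in I_\mA + \langle x_{n-1}\rangle$, i.e.\ $\bfs - \bfe_2 - \bfe_3 \in \mS$ is witnessed by a monomial in $w x_n^{c-1}(I_\mA + \langle x_{n-1}\rangle)$; but then $w x_n^{c-1}$, hence $w$ itself (since $x_n \nmid$ the class forces reduction), would be a monomial of $\mB_0$ lying in $I_\mA + \langle x_{n-1}\rangle$, contradicting $r_\beta = w x_n^c$ being the reduced remainder. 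The key obstacle in both parts is handling the non-membership assertions rigorously, which hinges on the coprimality built into the Schreyer basis $\mH$ (the exponent of $x_{n-1}$ in $r_\beta$ is zero) together with Proposition~\ref{prop:pertencesemig} and the primality of $I_\mA$; the membership assertions are immediate from reading off exponents.
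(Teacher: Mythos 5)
Your handling of the membership claims is correct: $\bfs-\bfe_2\in\mS$ comes from $\bx^\beta=vx_{n-1}^b$ with $b\geq 2$, and the remaining memberships are read off the exponents of the remainder $r_\beta$. However, the core of the lemma is the non-membership $\bfs-(\bfe_2+\bfe_3)\notin\mS$, and that part of your argument is not closed. You explicitly say that ``spelling this out is the main obstacle,'' and the route you sketch (starting from $r_\beta$, invoking Proposition~\ref{prop:pertencesemig}, dividing out $x_n$) wanders without landing on a contradiction. The issue is that from $wx_{n-2}^ax_n^{c-1}\in I_\mA+\langle x_{n-1}\rangle$ you only obtain a binomial $wx_{n-2}^ax_n^{c-1}-Nx_{n-1}\in I_\mA$ with $Nx_{n-1}\in\ini{I_\mA}$, which by itself contradicts nothing; you would still need to transport this back to $vx_{n-1}^b$ to reach the minimal generator $x_{n-1}^b$ of $I_v$.

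The paper's argument is direct and avoids $r_\beta$ entirely. If $\bfs-(\bfe_2+\bfe_3)\in\mS$, pick a monomial $M$ with $\degs{M}=\bfs-(\bfe_2+\bfe_3)$; then $vx_{n-1}^b-Mx_{n-1}x_n\in I_\mA$ since both monomials have $\mS$-degree $\bfs$. Because $I_\mA$ is prime and $x_{n-1}\notin I_\mA$, one may cancel $x_{n-1}$ to get $vx_{n-1}^{b-1}-Mx_n\in I_\mA$. Since $v\in\mB_0$ is free of $x_{n-2},x_{n-1},x_n$, the monomial $vx_{n-1}^{b-1}$ has $x_n$-exponent $0$ while $Mx_n$ has $x_n$-exponent $\geq 1$; hence $vx_{n-1}^{b-1}>_\omega Mx_n$ in the graded reverse lexicographic order, so $vx_{n-1}^{b-1}\in\ini{I_\mA}$, i.e.\ $x_{n-1}^{b-1}\in I_v=(\ini{I_\mA}:v)\cap A$. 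This contradicts the fact that $x_{n-1}^{b}$ is a minimal generator of $I_v$ (which is exactly what $\bx^\beta=vx_{n-1}^b\in C\subset\mB_1'$ encodes). The same cancellation works verbatim in the $C_2$ case. Your observation that $x_{n-1}\nmid r_\beta$ is true, but it is not the lever the proof needs; the contradiction comes from minimality in $G(I_v)$, not from properties of $r_\beta$.
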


\begin{figure}[htbp]
\centering
\begin{subfigure}[b]{0.45\linewidth}
\centering
\begin{tikzpicture}[scale=1.5]
\node at (1.25,1.1,0) {$\bfs$};
  \draw[thin](1,1,0)--(0,1,0)--(0,1,1)--(1,1,1)--(1,1,0)--(1,0,0)--(1,0,1)--(0,0,1)--(0,1,1);
  \draw[thin](1,1,1)--(1,0,1);
  \draw[thin,dashed](1,0,0)--(0,0,0)--(0,1,0);
  \draw[thin,dashed](0,0,0)--(0,0,1);
  \draw [red] plot [only marks, mark size=1.2, mark=square*, mark options = {fill=white}] coordinates {(0,0,1) (1,0,1)};
  \draw [blue] plot [only marks, mark size=1.2, mark=*] coordinates {(1,0,0) (0,1,0) (1,1,0) (0,1,1) (1,1,1)};
\end{tikzpicture}
\caption{Situation in Lemma~\ref{lemma:6}~\ref{lemma:6_1}.}
\end{subfigure}
\begin{subfigure}[b]{0.45\linewidth}
\centering
\begin{tikzpicture}[scale=1.5]
\node at (1.25,1.1,0) {$\bfs$};
  \draw[thin](1,1,0)--(0,1,0)--(0,1,1)--(1,1,1)--(1,1,0)--(1,0,0)--(1,0,1)--(0,0,1)--(0,1,1);
  \draw[thin](1,1,1)--(1,0,1);
  \draw[thin,dashed](1,0,0)--(0,0,0)--(0,1,0);
  \draw[thin,dashed](0,0,0)--(0,0,1);
  \draw [red] plot [only marks, mark size=1.2, mark=square*, mark options = {fill=white}] coordinates {(0,0,1) (1,0,1)};
  \draw [blue] plot [only marks, mark size=1.2, mark=*] coordinates {(1,0,0) (1,1,0) (1,1,1)};
\end{tikzpicture}
\caption{Situation in Lemma~\ref{lemma:6}~\ref{lemma:6_2}.}
\end{subfigure}
\caption{}
\label{fig:lemma6}
\end{figure}

\begin{proof}
Let us prove~\ref{lemma:6_1}. If $\bx^\beta = vx_{n-1}^b \in C_1$, there exist a monomial $w\in \mB_0$ and natural numbers $a,c\geq 1$ such that $vx_{n-1}^b-wx_{n-2}^a x_n^c \in I_\mA$. From this fact, it follows that $\bfs-\bfe_i \in \mS$ for $i=1,2,3$, and $\bfs-(\bfe_1+\bfe_3) \in \mS$. Suppose by contradiction that $\bfs-(\bfe_2+\bfe_3) \in \mS$. Then, there exists a monomial $M \in \k[x_1,\ldots,x_n]$ such that $vx_{n-1}^b- M x_{n-1}x_n \in I_\mA$. Since $I_\mA$ is prime, then $vx_{n-1}^{b-1}- M x_n \in I_\mA$, and hence $vx_{n-1}^{b-1} \in \ini{I_\mA}$, which contradicts with the minimality of $x_{n-1}^b \in G(I_v)$. The proof of~\ref{lemma:6_2} is analogous.
\end{proof}

\begin{theorem}\label{thm:prunB1}
Let $\bx^\beta = vx_{n-1}^b \in C$. 
\begin{enumerate}[(1)]
    \item\label{thm:prunB1_1} If $\bx^\beta \in C_1$, then
    \[vx_{n-1}^b \in \mB_1 \Longleftrightarrow vx_{n-1}^{b-1} \notin I_\mA + \id{x_{n-2}} \, .\]
    \item\label{thm:prunB1_2} If $\bx^\beta \in C_2$, denote by $wx_n^c$ the remainder of $\bx^\beta$ by $\mG$. Then,
    \[vx_{n-1}^b \in \mB_1 \Longleftrightarrow vx_{n-1}^{b-1} \notin I_\mA + \id{x_{n-2}} \text{ or } wx_n^{c-1} \notin I_\mA + \id{x_{n-2}} \, .\]
\end{enumerate}
Therefore,
\begin{multline*}
\mB_1 = \left( \mB_1' \setminus C \right) \cup \{ vx_{n-1}^b \in C_1 \mid vx_{n-1}^{b-1} \notin I_\mA + \id{x_{n-2}}\} \\
\cup \{vx_{n-1}^b \in C_2 \mid vx_{n-1}^{b-1} \notin I_\mA + \id{x_{n-2}} \text{ or } wx_n^{c-1} \notin I_\mA+\id{x_{n-2}}\} \, .
\end{multline*}

\end{theorem}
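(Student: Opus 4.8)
The plan is to reduce the statement to a combinatorial question about the simplicial complexes $T_\bfs$, and then read off the answer from Lemma~\ref{lemma:6} and Proposition~\ref{prop:pertencesemig}. The first step is to show that, for $\bx^\beta = vx_{n-1}^b \in C$ with $\bfs := \degs{\bx^\beta}$, one has $\bx^\beta \in \mB_1$ if and only if $\bfs \in E_\mS^{3,1}\cup E_\mS^{2,0}\cup E_\mS^{3,0}$, i.e.\ (by Theorem~\ref{thm:dim3_Si}) if and only if $\bfs$ occurs as a shift of $\mS_1$ in the multigraded short resolution~\eqref{eq:NoetherRes_semigroup}. The forward implication is immediate, since a surviving first-step generator of multidegree $\bfs$ forces $\bfs\in\mS_1$. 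For the converse, I would use that passing from the Schreyer resolution~\eqref{eq:Schreyer_dim3} to the short resolution only cancels unit matrix entries, hence only cancels pairs of generators of equal multidegree; that by Lemma~\ref{lemma:C} the cancelled first-step generators all lie in $C$; and that distinct elements of $C$ have distinct $\mS$-degrees (if $vx_{n-1}^b, v'x_{n-1}^{b'}\in C$ had equal $\mS$-degree, then $\degs{v}-\degs{v'}=(b'-b)\bfe_2$, which forces $b=b'$ and then $v=v'$, because elements of $\aps$ have no $\bfe_2$-predecessor in $\mS$ and distinct monomials in $\mB_0$ have distinct $\mS$-degrees). Hence $\bx^\beta$ is the unique first-step generator in multidegree $\bfs$ that can disappear, and it disappears precisely when there is a cancellation in that multidegree.

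Granting the reduction, I would run the case analysis. If $\bx^\beta\in C_1$, Lemma~\ref{lemma:6}~\ref{lemma:6_1} gives that $T_\bfs$ has all three vertices, contains the edge $\{1,3\}$, and omits the edge $\{2,3\}$; hence $T_\bfs$ is either the three vertices with the single edge $\{1,3\}$ (so $\bfs\in E_\mS^{3,1}$, contributing to $\mS_1$) or the path $2$--$1$--$3$ (a contractible ``Other configuration'' of Table~\ref{tab:simp_complexes}, not contributing), according as the edge $\{1,2\}$ is absent from or present in $T_\bfs$. Therefore $\bx^\beta\in\mB_1$ if and only if $\bfs-(\bfe_1+\bfe_2)\notin\mS$. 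Since $\degs{vx_{n-1}^{b-1}}=\bfs-\bfe_2$ and $\bfe_1=\degs{x_{n-2}}$, Proposition~\ref{prop:pertencesemig} rewrites this condition as $vx_{n-1}^{b-1}\notin I_\mA+\id{x_{n-2}}$, which is part~(1).

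If $\bx^\beta\in C_2$, write $r_\beta=wx_n^c$ with $c\ge 1$ and $w\in\mB_0$, so that $\bfs=\degs{wx_n^c}$, and apply Lemma~\ref{lemma:6}~\ref{lemma:6_2}: $\{2\},\{3\}\in T_\bfs$ and $\{2,3\}\notin T_\bfs$. Examining the faces at vertex $1$: if $\{1\}\notin T_\bfs$ then $T_\bfs\in E_\mS^{2,0}$; if $\{1\}\in T_\bfs$ and neither $\{1,2\}$ nor $\{1,3\}$ lies in $T_\bfs$ then $T_\bfs\in E_\mS^{3,0}$; if $\{1\}\in T_\bfs$ and exactly one of $\{1,2\},\{1,3\}$ lies in $T_\bfs$ then $T_\bfs\in E_\mS^{3,1}$; and if both $\{1,2\},\{1,3\}$ lie in $T_\bfs$ then $T_\bfs$ is the contractible path $2$--$1$--$3$. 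As $\{1,2\}\in T_\bfs$ and $\{1,3\}\in T_\bfs$ each already forces $\{1\}\in T_\bfs$, the first three cases are exactly the ones with $\bfs\in\mS_1$, so $\bx^\beta\notin\mB_1$ if and only if $\bfs-(\bfe_1+\bfe_2)\in\mS$ and $\bfs-(\bfe_1+\bfe_3)\in\mS$. Using $\degs{vx_{n-1}^{b-1}}=\bfs-\bfe_2$, $\degs{wx_n^{c-1}}=\bfs-\bfe_3$ and Proposition~\ref{prop:pertencesemig}, this becomes ``$vx_{n-1}^{b-1}\in I_\mA+\id{x_{n-2}}$ and $wx_n^{c-1}\in I_\mA+\id{x_{n-2}}$'', whose negation is part~(2). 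The closed formula for $\mB_1$ then follows from $\mB_1'\setminus\mB_1\subseteq C=C_1\sqcup C_2$ (Lemma~\ref{lemma:C}) together with parts~(1) and~(2).

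The main obstacle is the converse in the first paragraph: one must show that a multidegree $\bfs=\degs{\bx^\beta}$ with $\bx^\beta\in C$ and $\bfs\in\mS_1$ carries no cancellation, equivalently that it is never also the multidegree of a second-step Schreyer generator. I would attack this with the explicit description of those generators from Algorithm~\ref{alg:dim3}: each has the form $u\cdot\lcm(M_{(u,i)},M_{(u,i+1)})=u\,x_{n-2}^{a}x_{n-1}^{b}$ with $a,b\ge 1$, so its multidegree $\bfs'$ satisfies $\bfs'-\bfe_1,\bfs'-\bfe_2,\bfs'-(\bfe_1+\bfe_2)\in\mS$, i.e.\ $\{1\},\{2\},\{1,2\}$ are all faces of $T_{\bfs'}$. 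Matching these forced faces against the admissible shapes of $T_\bfs$ for $\bx^\beta\in C$ coming from Lemma~\ref{lemma:6} shows that a common multidegree can only arise when $T_\bfs$ is a contractible path; the one delicate point is the $C_2$ case with $\{1,2\}\in T_\bfs$, where one must still rule out the shape $E_\mS^{3,1}$ with distinguished edge $\{1,2\}$, and I expect this to require tracking the relation $vx_{n-1}^b\equiv wx_n^c\pmod{I_\mA}$ together with the minimality of $x_{n-1}^b$ in $G(I_v)$ more closely. Everything else reduces to the combinatorics of the five admissible configurations of $T_\bfs$ recorded in Table~\ref{tab:simp_complexes}.
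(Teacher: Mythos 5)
Your overall reduction --- showing that for $\bx^\beta\in C$ one has $\bx^\beta\in\mB_1$ if and only if $\degs{\bx^\beta}\in\mS_1$, and then translating this into the stated membership tests via Lemma~\ref{lemma:6} and Proposition~\ref{prop:pertencesemig} --- is exactly the paper's plan, and your case analysis of the configurations of $T_\bfs$ reproduces the paper's use of Lemma~\ref{lemma:6} faithfully. The genuine divergence is how you try to prove the converse of the reduction, i.e.\ that $\degs{\bx^\beta}\in\mS_1$ forces $\bx^\beta\in\mB_1$. The paper proves a \emph{Claim} purely about first-step data: the multiplicity of each $\bfs\in\mS_1$ among $\{\degs{\bx^\alpha}\mid\bx^\alpha\in\mB_1'\}$ equals its multiplicity in $\mS_1$, established by showing (i) distinct elements of $C$ have distinct $\mS$-degrees and (ii) if an element of $\mB_1'\setminus C$ and an element of $C$ share an $\mS$-degree $\bfs$, then $\bfs\in E_\mS^{3,0}$ and that degree occurs exactly twice in $\mB_1'$. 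You instead argue via second-step Schreyer generators: show that $\degs{\bx^\gamma}$ for $\bx^\gamma\in\mB_2'$ always has $\{1\},\{2\},\{1,2\}\in T_{\degs{\bx^\gamma}}$ and conclude that no such $\bfs$ can also be $\degs{\bx^\beta}$ with $\bfs\in\mS_1$. This is a legitimately different slicing of the problem (second-step data rather than first-step data), but it introduces an extra case that the paper's route never has to confront.

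And that extra case is a real gap. For $\bx^\beta\in C_2$, Lemma~\ref{lemma:6}~\ref{lemma:6_2} only guarantees $\{2\},\{3\}\in T_\bfs$ and $\{2,3\}\notin T_\bfs$, which is compatible with $\{1\},\{1,2\}\in T_\bfs$ and $\{1,3\}\notin T_\bfs$; this is $E_\mS^{3,1}$ with distinguished edge $\{1,2\}$, an element of $\mS_1$, yet $T_\bfs$ then contains the very faces $\{1\},\{2\},\{1,2\}$ that a second-step multidegree is guaranteed to carry. Your ``matching forced faces'' argument therefore does not rule out a shared multidegree in this case, and --- as you yourself note --- you leave it to ``tracking the relation $vx_{n-1}^b\equiv wx_n^c \pmod{I_\mA}$ \ldots{} more closely'', which is exactly the kind of leading-term/minimality-of-$G(I_u)$ manipulation that constitutes the heart of the matter. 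Without actually carrying this out, the proof is incomplete. For comparison, the paper avoids the problem by never invoking $\mB_2'$: in step (ii) of its Claim it rules out $E_\mS^{3,1}$ for a shared degree between $\mB_1'\setminus C$ and $C$ by a short initial-term argument, and then deduces the needed conclusion by a degree-multiplicity count. If you want to salvage your route, you would need to produce the analogous argument for $\mB_2'$-generators: write $\bx^\gamma=ux_{n-2}^ax_{n-1}^{b'}$, combine $\degs{\bx^\gamma}=\degs{vx_{n-1}^b}=\degs{wx_n^c}$, and derive a contradiction with $\bfs-(\bfe_1+\bfe_3)\notin\mS$ using the structure of $G(I_u)$; this is doable but not obviously shorter than the paper's Claim.
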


\begin{proof}
By Theorem~\ref{thm:dim3_Si}, we know that the multiset of $\mS$-degrees appearing in the first step of the short resolution is \[ \mS_1 = E_\mS^{3,1} \cup E_\mS^{2,0} \cup E_\mS^{3,0} \cup E_\mS^{3,0}; \]
we observe that in $\mS_1$ the elements of $E_\mS^{3,1} \cup E_\mS^{2,0}$ have multiplicity $1$, and the elements of $E_\mS^{3,0}$ have multiplicity two.
We know that $\mS_1$ is a (multi)subset of  \[ \mS_1' := \{ \degs{\bx^\alpha} \, \vert \, \bx^{\alpha} \in \mB_1' \setminus C\}  \cup \{ \degs{\bx^\alpha} \, \vert \, \bx^{\alpha} \in  C \}. \]

\noindent {\it Claim:} Whenever $\bfs \in \mS_1$, its multiplicities in $\mS_1$ and in $\mS_1'$ coincide.

\noindent {\it Proof of the claim:} By Lemma~\ref{lemma:C}, we know that $\{ \degs{\bx^\alpha} \, \vert \, \bx^{\alpha} \in \mB_1' \setminus C \}$ is a (multi)subset of $\mS_1$. Hence, to derive the claim it suffices to prove that: \begin{enumerate}[(i)]
\item\label{thm:prunB1_claim_i} distinct elements of $C$ have distinct $\mS$-degrees, and
\item\label{thm:prunB1_claim_ii} whenever an element of $\mB_1' \setminus C$ and an element of $C$ have the same $\mS$-degree, then this $\mS$-degree belongs to $E_\mS^{3,0}$ and has multiplicity exactly two in $\mS_1'$.
\end{enumerate}

To prove \ref{thm:prunB1_claim_i}, consider two elements in $C$ with the same $\mS$-degree, namely, $\bx^\alpha = ux_{n-1}^b$ and $\bx^\beta = u' x_{n-1}^{b'}$ and assume that $b \geq b'$. Then it follows that $f = ux_{n-1}^{b-b'} - u' \in I_\mA$, so $f = 0$, and hence $u=u'$ and $b=b'$.

To prove \ref{thm:prunB1_claim_ii}, consider $\bx^\alpha \in \mB_1' \setminus C$ and $\bx^\beta \in C$ with $\bfs:= \degs{\bx^\alpha} = \degs{\bx^\beta}$. We write $\bx^\beta = ux_{n-1}^b$, $b\geq 2$, $\bx^\alpha = u'x_{n-2}^{a'}x_{n-1}^{b'}$, $a'+b'\geq 1$.
Suppose first that $\bx^\beta \in C_1$, i.e., $r_\beta = vx_{n-2}^{a}x_n^{c}$ for some $a,c \in \Z^+$. If $a'\geq 1$, then $u'x_{n-2}^{a'-1}x_{n-1}^{b'}-vx_{n-2}^{a-1}x_n^c \in I_\mA$, so $u'x_{n-2}^{a'-1}x_{n-1}^{b'} \in \ini{I_\mA}$, and hence $x_{n-2}^{a'-1}x_{n-1}^{b'} \in I_{u'}$, contradicting the minimality of $x_{n-2}^{a'}x_{n-1}^{b'} \in G(I_{u'})$. Therefore, $a'=0$, so $\degs{ux_{n-1}^b} = \degs{u'x_{n-1}^{b'}}$, which implies that $u=u'$ and $b=b'$, a contradiction. Hence, $\bx^\beta \in C_2$, i.e., $r_\beta = vx_n^c$, for some $c\in \Z^+$. Now, let us see that $\bfs \in E_\mS^{3,0}$. If $b' \geq 1$, then $ux_{n-1}^{b-1}-u'x_{n-2}^{a'}x_{n-1}^{b'-1} \in I_\mA$ is a nonzero binomial and neither $ux_{n-1}^{b-1}$ nor $u'x_{n-2}^{a'}x_{n-1}^{b'-1}$ belongs to $\ini{I_\mA}$, which is impossible. This proves that $b'=0$. Since, $\bfs = \degs{u'x_{n-2}^{a'}} = \degs{ux_{n-1}^b} = \degs{vx_n^c} \in \mS_1$, then either $\bfs \in E_\mS^{3,1}$ or $\bfs \in E_\mS^{3,0}$. 
Suppose $\bfs \in E_\mS^{3,1}$. Then there exists $w\in \mB_0$, $a'',b'', c'' \in \N$ with at least two of them nonzero, such that $\bfs = \degs{wx_{n-2}^{a''}x_{n-1}^{b''}x_n^{c''}}$. Combining this with $\bfs = \degs{u'x_{n-2}^{a'}}$ (if $a'' \neq 0$) or $\bfs = \degs{ux_{n-1}^b}$ (if $b'' \neq 0$), we get a contradiction. Hence, $\bfs \in E_\mS^{3,0}$. 
Finally, let us see that there does not exist $\bx^\gamma \in \mB_1' \setminus C$, $\bx^\gamma \neq \bx^\alpha$, such that $\degs{\bx^\gamma} = \bfs$. Let $\bx^\gamma = u''x_{n-2}^{a''}x_{n-1}^{b''} \in \mB_1' \setminus C$, $\bx^\gamma \neq \bx^\alpha$, such that $\degs{\bx^\gamma} = \bfs$. Then, $a'',b''\in \Z^+$, or $a''\in \Z^+$ and $b''=0$, or $a''=0$ and $b''=1$. Proceeding as before each of these three cases leads to a contradiction. Therefore, the claim is proved.

As a consequence of the {\it Claim}, one has a criterion to detect if an element of $\mB_1'$ belongs to $\mB_1$ or not. More precisely, let $\bx^\alpha \in \mB_1'$, then:
\[ \bx^\alpha \in \mB_1  \Longleftrightarrow \degs{\bx^\alpha} \in \mS_1 \, .\]  
 We now use this criterion to prove~\ref{thm:prunB1_1} and~\ref{thm:prunB1_2}.

Let $\bx^\beta = vx_{n-1}^b \in C_1$ and set $\bfs = \degs{\bx^\beta}$. By Lemma~\ref{lemma:6}~\ref{lemma:6_1}, one has that $\bx^\beta \in \mB_1$ \iff{} $\bfs-(\bfe_1+\bfe_2) \notin \mS$. Then, by Proposition~\ref{prop:pertencesemig}, one has that $\bfs-(\bfe_1+\bfe_2) \in \mS$ \iff{} $vx_{n-1}^{b-1} \in I_\mA + \id{x_{n-2}}$.

Let $\bx^\beta = vx_{n-1}^b \in C_2$ and set $\bfs = \degs{\bx^\beta}$ and $r_\beta = wx_n^c$ the remainder of $\bx^\beta$ by the reduced Gröbner basis of $I_\mA$. By Lemma~\ref{lemma:6}~\ref{lemma:6_2}, one has that $\bx^\beta \in \mB_1$ \iff{} $\bfs-(\bfe_1+\bfe_2)  \notin \mS$ or $\bfs-(\bfe_1+\bfe_3) \notin \mS$. Hence, the result follows again from Proposition~\ref{prop:pertencesemig}.

The last claim in the theorem is a direct consequence of~\ref{thm:prunB1_1} and~\ref{thm:prunB1_2}. 
\end{proof}

In Theorem~\ref{thm:prunB1}, we have obtained a test to decide algebraically if a monomial $\bx^\beta \in C \subset \mB_1'$ is in $\mB_1$ or not, and hence we can obtain the set $\mB_1$. To apply this criterion, one only has to test the membership of some monomials to the ideal $I_\mA + \id{x_{n-2}}$. 
Now, we do something similar to obtain the set $\mB_2 \subset \mB_2'$. 

\begin{lemma} \label{lemma:8}
Let $\bx^\alpha \in \mB_2' \setminus \mB_2$, and set $\bfs = \degs{\bx^\alpha}$, then
\[\bfs-\bfe_i \in \mS, \forall i=1,2,3; \, \bfs-(\bfe_1+\bfe_2) \in \mS; \, \bfs-(\bfe_1+\bfe_3) \in \mS; \text{and } \,\bfs-(\bfe_1 + \bfe_2+\bfe_3) \notin \mS \, .\]
\end{lemma}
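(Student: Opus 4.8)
The plan is to read off the precise shape of the monomial $\bx^\alpha$ from Algorithm~\ref{alg:dim3}, to produce out of it a companion monomial in $\mB_1'\setminus\mB_1$ with the \emph{same} $\mS$-degree $\bfs$, and then to feed that companion into Lemmas~\ref{lemma:C} and~\ref{lemma:6} and Theorem~\ref{thm:prunB1} to extract the four claimed relations.

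First I would unwind the construction. By lines~6--7 of Algorithm~\ref{alg:dim3}, $\bx^\alpha = u\cdot\lcm(M_{(u,i)},M_{(u,i+1)})$ for some $u\in\mB_0\cap J$ and some $1\le i<\ell_u$. Since $G(I_u)$ is a minimal monomial generating set of an ideal of $\Bbbk[x_{n-2},x_{n-1}]$, listed as $M_{(u,1)}>\cdots>M_{(u,\ell_u)}$ for the lexicographic order with $x_n>x_{n-1}>x_{n-2}$, its exponents form a staircase: writing $M_{(u,j)}=x_{n-2}^{a_j}x_{n-1}^{b_j}$ one has $a_i<a_{i+1}$ and $b_i>b_{i+1}$, so $\lcm(M_{(u,i)},M_{(u,i+1)})=x_{n-2}^{a_{i+1}}x_{n-1}^{b_i}$ with $a_{i+1}\ge 1$ and $b_i\ge 1$. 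Hence $x_{n-2}x_{n-1}$ divides $\bx^\alpha$, and the monomials $\bx^\alpha/x_{n-2}$, $\bx^\alpha/x_{n-1}$, $\bx^\alpha/(x_{n-2}x_{n-1})$ witness that $\bfs-\bfe_1$, $\bfs-\bfe_2$ and $\bfs-(\bfe_1+\bfe_2)$ all lie in $\mS$. That settles three of the claimed memberships with no real work.

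Next I would compare Euler characteristics. The Schreyer resolution \eqref{eq:Schreyer_dim3} and the short resolution are both $\mS$-graded free resolutions of $\ks$ (the reduced Gröbner basis $\mG$ consists of $\mS$-homogeneous binomials, so every remainder $r_\alpha$ and every $\bfh_\alpha$ is $\mS$-homogeneous), and they coincide at step~$0$ by Proposition~\ref{prop:first_step}. Equating alternating sums of multigraded Hilbert series and cancelling the common step-$0$ term gives, after clearing denominators,
\[ \sum_{\bx\in\mB_1'}\bft^{\degs{\bx}}-\sum_{\bx\in\mB_2'}\bft^{\degs{\bx}}=\sum_{\bx\in\mB_1}\bft^{\degs{\bx}}-\sum_{\bx\in\mB_2}\bft^{\degs{\bx}}, \]
so, since $\mB_i\subseteq\mB_i'$, the multiset of $\mS$-degrees of $\mB_1'\setminus\mB_1$ equals that of $\mB_2'\setminus\mB_2$. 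In particular there is a monomial $\bx^\gamma\in\mB_1'\setminus\mB_1$ with $\degs{\bx^\gamma}=\bfs$. By Lemma~\ref{lemma:C}, $\bx^\gamma\in C=C_1\sqcup C_2$; write $\bx^\gamma=vx_{n-1}^{b'}$ with $v\in\mB_0$ and $b'\ge 2$.

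Finally I would harvest the three remaining relations from $\bx^\gamma$, using that $\degs{\bx^\gamma}=\bfs$. In either case $C_1$ or $C_2$, Lemma~\ref{lemma:6} gives $\bfs-\bfe_3\in\mS$ and $\bfs-(\bfe_2+\bfe_3)\notin\mS$. For $\bfs-(\bfe_1+\bfe_3)\in\mS$: if $\bx^\gamma\in C_1$ this is part of Lemma~\ref{lemma:6}\ref{lemma:6_1}; if $\bx^\gamma\in C_2$ with remainder $r_\gamma=wx_n^c$ ($c\ge 1$, $w\in\mB_0$), then $\bx^\gamma\notin\mB_1$ forces, by Theorem~\ref{thm:prunB1}\ref{thm:prunB1_2}, $wx_n^{c-1}\in I_\mA+\id{x_{n-2}}$; since $\degs{wx_n^{c-1}}=\bfs-\bfe_3$, Proposition~\ref{prop:pertencesemig} yields $\bfs-(\bfe_1+\bfe_3)=(\bfs-\bfe_3)-\bfe_1\in\mS$. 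And $\bfs-(\bfe_1+\bfe_2+\bfe_3)\notin\mS$, for otherwise adding $\bfe_1\in\mS$ would give $\bfs-(\bfe_2+\bfe_3)\in\mS$, a contradiction. The main obstacle is the third paragraph: turning ``$\bx^\alpha$ disappears upon minimalizing'' into ``there is a same-$\mS$-degree element of $\mB_1'\setminus\mB_1$''. Handling it through the multigraded Euler characteristic, rather than by tracking how the relevant $S$-polynomial reduces modulo $\mathcal H$, is what keeps the argument short, but it does rely on the fact that for $d=3$ the Schreyer resolution is $\mS$-graded and has length at most $2$.
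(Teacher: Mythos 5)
Your proof is correct, but it takes a genuinely different route from the paper's. The paper proves Lemma~\ref{lemma:8} in a self-contained way: it identifies the $S$-polynomial $S(\bfh_{(u,i)},\bfh_{(u,i+1)})$ that produced $\bx^\alpha$, unpacks the shape of $\bfh_{(u,i)}$ and $\bfh_{(u,i+1)}$ as in the proof of Lemma~\ref{lemma:C}, reads off three monomials $u x_{n-2}^a x_{n-1}^{b'}$, $v x_{n-1}^{b+b'}$ and $w x_{n-2}^{a-a'} x_n^c$ of $\mS$-degree $\bfs$ which give all the positive memberships at once, and then rules out $\bfs-(\bfe_1+\bfe_2+\bfe_3)\in\mS$ by a staircase argument that would contradict the $\lcm$ in the definition of $\mB_2'$. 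You instead extract $\bfs-\bfe_1$, $\bfs-\bfe_2$, $\bfs-(\bfe_1+\bfe_2)\in\mS$ directly from the shape of $\bx^\alpha$, and then compare the $\mS$-graded Euler characteristics of the Schreyer resolution \eqref{eq:Schreyer_dim3} and the short resolution — which is legitimate, since $\mG$ and hence $\mH$ and the reduced $S$-polynomials are $\mS$-homogeneous, both resolutions share $\mB_0$, and both have length at most $2$ — to produce a companion $\bx^\gamma\in\mB_1'\setminus\mB_1$ with the same $\mS$-degree; you then feed $\bx^\gamma$ into Lemma~\ref{lemma:C}, Lemma~\ref{lemma:6}, Theorem~\ref{thm:prunB1} and Proposition~\ref{prop:pertencesemig} to obtain the remaining relations, finishing $\bfs-(\bfe_1+\bfe_2+\bfe_3)\notin\mS$ via $\bfs-(\bfe_2+\bfe_3)\notin\mS$. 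The trade-off is that the paper's version is self-contained and does not rely on Theorem~\ref{thm:prunB1}, while yours is shorter and more conceptual but depends on the accumulated machinery (in particular, the logical order Lemma~\ref{lemma:C} $\to$ Lemma~\ref{lemma:6} $\to$ Theorem~\ref{thm:prunB1} $\to$ Lemma~\ref{lemma:8} is respected, so there is no circularity). A minor point: the identifications $\mB_i\subseteq\mB_i'$ are only defined up to choice when $\mS$-degrees repeat, but your Euler-characteristic identity is independent of that choice, so this does not affect correctness.
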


\begin{proof}
If a monomial $\bx^\alpha \in \mB_2'$ is not in $\mB_2$, then it comes from a $S$-polynomial $S(\bfh,\bfh')$, $\bfh,\bfh'\in \mH$, such that there appears a nonzero constant in the reduction of $S(\bfh,\bfh')$ by the Gröbner basis $\{\bfh_\alpha \mid \bx^\alpha \in \mB_1'\}$. The syzygies $\bfh,\bfh'$ have expressions $\bfh = x_{n-2}^a \bfeps_u- x_{n-1}^b \bfeps_v$, and $\bfh' = x_{n-2}^{a'}x_{n-1}^{b'} \bfeps_u -x_n^c \bfeps_w$, for some $u,v,w\in \mB_0$,  $a,b,b',c \geq 1$ and $a' \in \N$, with $a'<a$, as in the proof of Lemma~\ref{lemma:C}. Therefore, $\bx^\alpha = u \cdot \lcm ( x_{n-2}^a,x_{n-2}^{a'}x_{n-1}^{b'} ) = u \cdot x_{n-2}^a x_{n-1}^{b'}$, and we note that \[\degs{u  x_{n-2}^a x_{n-1}^{b'}} = \degs{vx_{n-1}^{b+b'}} = \degs{wx_{n-2}^{a-a'}x_n^c} \, .\]
From the previous equalities, we deduce that $\bfs-\bfe_i \in \mS$ for all $i=1,2,3$, $\bfs-(\bfe_1+\bfe_2) \in \mS$, and $\bfs-(\bfe_1+\bfe_3) \in \mS$. Let us prove that $\bfs-(\bfe_1 + \bfe_2+\bfe_3) \notin \mS$. Assume by contradiction that $\bfs-(\bfe_1 + \bfe_2+\bfe_3) \in \mS$. Then, there exist a monomial $M \in \k[x_1,\ldots,x_n]$, such that $ux_{n-2}^a x_{n-1}^{b'}-x_{n-2}x_{n-1}x_n M \in I_\mA$, so $ux_{n-2}^{a-1} x_{n-1}^{b'-1}- x_n M \in I_\mA$. 
Therefore, there exist natural numbers $a'' \leq a-1$ and $b'' \leq b'-1$ such that $x_{n-2}^{a''} x_{n-1}^{b''} \in G(I_u)$. Since $x_{n-2}^{a'}x_{n-1}^{b'} \in G(I_u)$ and $b'' < b'$, then $a'' >a'$. 
Hence, $\lcm \left( x_{n-2}^{a''} x_{n-1}^{b''} , x_{n-2}^{a'} x_{n-1}^{b'} \right) = x_{n-2}^{a''}x_{n-1}^{b'}$, which is a proper divisor of $x_{n-2}^a x_{n-1}^{b'}$, a contradiction with $\bx^\alpha = u x_{n-2}^a x_{n-1}^{b'}\in \mB_2'$.
\end{proof}

\begin{figure}[htbp]
\centering
\begin{tikzpicture}[scale=1.5]
\node at (1.25,1.1,0) {$\bfs$};
  \draw[thin](1,1,0)--(0,1,0)--(0,1,1)--(1,1,1)--(1,1,0)--(1,0,0)--(1,0,1)--(0,0,1)--(0,1,1);
  \draw[thin](1,1,1)--(1,0,1);
  \draw[thin,dashed](1,0,0)--(0,0,0)--(0,1,0);
  \draw[thin,dashed](0,0,0)--(0,0,1);
  \draw [red] plot [only marks, mark size=1.2, mark=square*, mark options = {fill=white}] coordinates {(0,0,1)};
  \draw [blue] plot [only marks, mark size=1.2, mark=*] coordinates {(0,0,0) (1,0,0) (0,1,0) (1,1,0) (0,1,1) (1,1,1)};
\end{tikzpicture}
\caption{Situation in Lemma~\ref{lemma:8}.}
\label{fig:lemma8}
\end{figure}
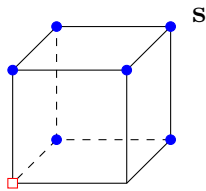

\begin{theorem} \label{thm:prunB2}
For all $\bx^\alpha = u x_{n-2}^a x_{n-1}^b \in \mB_2'$,
\[\bx^\alpha \in \mB_2 \Longleftrightarrow u x_{n-2}^a x_{n-1}^{b-1} \in I_\mA + \id{x_n} \, .\]
Therefore,
\[\mB_2 = \{ u x_{n-2}^{a} x_{n-1}^{b} \in \mB_2' \mid u x_{n-2}^a x_{n-1}^{b-1} \in I_\mA + \id{x_n} \} \, .\]
\end{theorem}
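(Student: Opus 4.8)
The plan is to follow the pattern of the proof of Theorem~\ref{thm:prunB1}: first establish that, for a monomial $\bx^\alpha \in \mB_2'$, membership in $\mB_2$ is detected by its $\mS$-degree $\bfs := \degs{\bx^\alpha}$ --- namely $\bx^\alpha \in \mB_2$ if and only if $\bfs \in \mS_2 = E_\mS^{3,3}$, the equality $\mS_2 = E_\mS^{3,3}$ being Theorem~\ref{thm:dim3_Si} --- and then translate this combinatorial condition into the stated ideal membership. For the translation, recall that every element of $\mB_2'$ has the form $\bx^\alpha = u\,x_{n-2}^{a}x_{n-1}^{b}$ with $u\in\mB_0\cap J$ and $x_{n-2}^{a}x_{n-1}^{b} = \lcm(M_{(u,i)},M_{(u,i+1)})$ for two consecutive elements of the lexicographically ordered set $G(I_u) = \{M_{(u,1)},\ldots,M_{(u,\ell_u)}\}$; writing $M_{(u,k)} = x_{n-2}^{a_k}x_{n-1}^{b_k}$, minimality of $G(I_u)$ forces $a_1<\cdots<a_{\ell_u}$ and $b_1>\cdots>b_{\ell_u}$, so that $a = a_{i+1}\geq 1$, $b = b_i\geq 1$, and the only elements of $G(I_u)$ dividing $x_{n-2}^{a}x_{n-1}^{b}$ are $M_{(u,i)}$ and $M_{(u,i+1)}$. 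Since $b\geq 1$, Proposition~\ref{prop:pertencesemig} applied to $\bx^\beta = u\,x_{n-2}^{a}x_{n-1}^{b-1}$, of $\mS$-degree $\bfs-\bfe_2$, and to $x_n$, of $\mS$-degree $\bfe_3$, yields
\[ u\,x_{n-2}^{a}x_{n-1}^{b-1}\in I_\mA+\id{x_n} \Longleftrightarrow \bfs-(\bfe_2+\bfe_3)\in\mS \, . \]
Hence it suffices to prove that, for $\bx^\alpha\in\mB_2'$, one has $\bx^\alpha\in\mB_2$ if and only if $\bfs-(\bfe_2+\bfe_3)\in\mS$.

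Next I would prove the degree criterion. Since the short resolution is obtained by minimalizing the Schreyer resolution \eqref{eq:Schreyer_dim3}, we have $\mB_2\subseteq\mB_2'$; and by Theorem~\ref{thm:dim3_Si} the multiset of $\mS$-degrees at the last step is the \emph{set} $E_\mS^{3,3}$, each element occurring once (the reduced homology $\tilde H_1(T_\bfs)$ of the boundary-of-a-triangle complex being one-dimensional). It therefore suffices to show that $\bx^\alpha\mapsto\degs{\bx^\alpha}$ is injective on $\mB_2'$: granting this, $\degs{\cdot}$ restricts to a bijection from $\mB_2$ onto $E_\mS^{3,3}$, and for $\bx^\alpha\in\mB_2'$ we get $\bx^\alpha\in\mB_2 \Longleftrightarrow \degs{\bx^\alpha}\in E_\mS^{3,3}$. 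To prove the injectivity, suppose $\bx^\alpha = u\,x_{n-2}^{a}x_{n-1}^{b}$ and $\bx^\gamma = u'\,x_{n-2}^{a'}x_{n-1}^{b'}$ lie in $\mB_2'$ and have the same $\mS$-degree, so $\bx^\alpha-\bx^\gamma\in I_\mA$. As $u,u'$ involve none of $x_{n-2},x_{n-1},x_n$ and $R/I_\mA$ is a domain, cancel the greatest common monomial factor to obtain $u\,P - u'\,Q\in I_\mA$ with $P,Q\in A$ coprime monomials; one checks that the exponents of $x_{n-2}$ and $x_{n-1}$ in $P$ are strictly smaller than $a$ and $b$ respectively (because $a',b'\geq 1$), and likewise for $Q$ versus $a'$ and $b'$. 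Using $a_1<\cdots<a_{\ell_u}$ and $b_1>\cdots>b_{\ell_u}$ together with $a=a_{i+1}$, $b=b_i$, no element of $G(I_u)$ divides $P$, hence $P\notin I_u$ and $u\,P\notin\ini{I}$; symmetrically $u'\,Q\notin\ini{I}$. A nonzero binomial of $I_\mA$ has its leading term (for $>_\omega$) in $\ini{I}$, so $u\,P - u'\,Q = 0$, which forces $u=u'$, $P=Q=1$, and $\bx^\alpha=\bx^\gamma$.

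It remains to conclude. If $\bx^\alpha\in\mB_2$, the degree criterion gives $\bfs\in E_\mS^{3,3}$, so $\bfs-(\bfe_2+\bfe_3)\in\mS$ and $u\,x_{n-2}^{a}x_{n-1}^{b-1}\in I_\mA+\id{x_n}$ by the displayed equivalence. If instead $\bx^\alpha\in\mB_2'\setminus\mB_2$, then Lemma~\ref{lemma:8} gives $\bfs-\bfe_i\in\mS$ for $i=1,2,3$, $\bfs-(\bfe_1+\bfe_2)\in\mS$, $\bfs-(\bfe_1+\bfe_3)\in\mS$, and $\bfs-(\bfe_1+\bfe_2+\bfe_3)\notin\mS$, while $\bfs\notin E_\mS^{3,3}$ by the degree criterion; comparing with the definition of $E_\mS^{3,3}$, the only one of its defining conditions not already met is $\bfs-(\bfe_2+\bfe_3)\in\mS$, so $\bfs-(\bfe_2+\bfe_3)\notin\mS$, i.e. $u\,x_{n-2}^{a}x_{n-1}^{b-1}\notin I_\mA+\id{x_n}$. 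The displayed formula for $\mB_2$ is then immediate.

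The crux is the degree criterion --- concretely, the injectivity of $\bx^\alpha\mapsto\degs{\bx^\alpha}$ on $\mB_2'$ --- where one must control the leading term of the binomial obtained after cancelling common factors; this is parallel to, but lighter than, the \emph{Claim} in the proof of Theorem~\ref{thm:prunB1}, since here there is no interplay between distinct families of generators. Everything downstream (Lemma~\ref{lemma:8}, the defining inequalities of $E_\mS^{3,3}$, and Proposition~\ref{prop:pertencesemig}) is bookkeeping.
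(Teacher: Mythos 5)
Your proof is correct and follows the same overall architecture as the paper's: reduce to the degree criterion $\bx^\alpha\in\mB_2 \Longleftrightarrow \degs{\bx^\alpha}\in E_\mS^{3,3}$ via the injectivity of $\degs{\cdot}$ on $\mB_2'$, then combine Lemma~\ref{lemma:8} with Proposition~\ref{prop:pertencesemig} to translate the single remaining condition $\bfs-(\bfe_2+\bfe_3)\in\mS$ into the stated membership in $I_\mA+\id{x_n}$. Where you genuinely diverge from the paper is in the proof of the injectivity \emph{Claim}, and your version is the cleaner one. The paper divides the binomial $ux_{n-2}^ax_{n-1}^b - u'x_{n-2}^{a'}x_{n-1}^{b'}$ by a single factor of $x_{n-1}$ and then asserts that $x_{n-2}^ax_{n-1}^{b-1}\in I_u$ contradicts ``the minimality of $x_{n-2}^ax_{n-1}^b\in G(I_u)$''; but for elements of $\mB_2'$ the monomial $x_{n-2}^ax_{n-1}^b$ is $\lcm(M_{(u,i)},M_{(u,i+1)})$, not itself a minimal generator, and $x_{n-2}^ax_{n-1}^{b-1}=x_{n-2}^{a_{i+1}}x_{n-1}^{b_i-1}$ is actually divisible by $M_{(u,i+1)}$, so no contradiction follows at that stage as written. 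You instead cancel the full $A$-gcd to reach $uP-u'Q$ with $P,Q\in A$ coprime and each exponent of $P$ strictly below $a=a_{i+1}$ and $b=b_i$; the staircase $a_1<\cdots<a_{\ell_u}$, $b_1>\cdots>b_{\ell_u}$ then genuinely excludes every $M_{(u,j)}$ from dividing $P$, forcing $uP\notin\ini{I}$ and, symmetrically, $u'Q\notin\ini{I}$, so $uP-u'Q=0$. This deeper cancellation is what makes the initial-term argument actually close, and it repairs what looks like a gap in the paper's stated reasoning while keeping everything else identical.
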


\begin{proof}
By Theorem~\ref{thm:dim3_Si}, we know that the multiset of $\mS$-degrees appearing in the second step of the short resolution is $\mS_2 = E_\mS^{3,3},$
and every element appears with mutiplicity $1$. We know that $\mS_2$ is a (multi)subset of  \[ \mS_2' := \{ \degs{\bx^\alpha} \, \vert \, \bx^{\alpha} \in \mB_2'\}. \]

\noindent {\it Claim:} Distinct elements of $\mB_2'$ have distinct $\mS$-degrees.

\noindent{\it Proof of the claim.} 
Take $\bx^\alpha = u x_{n-2}^{a}x_{n-1}^{b} \in \mB_2'$ and $\bx^\beta = u' x_{n-2}^{a'}x_{n-1}^{b'} \in \mB_2'$, $a,a',b,b' \in \Z^+$, such that $\degs{\bx^\alpha} = \degs{\bx^\beta}$. Then, $u x_{n-2}^{a}x_{n-1}^{b} - u' x_{n-2}^{a'}x_{n-1}^{b'} \in I_\mA$. If $u=u'$, then $a=a'$ and $b=b'$, and hence $\bx^\alpha = \bx^\beta$. Now suppose $u\neq u'$. In this case, $ux_{n-2}^{a}x_{n-1}^{b-1}-u'x_{n-2}^{a'}x_{n-1}^{b'-1} \in I_\mA$. Assume without loss of generality that its initial term is $ux_{n-2}^{a}x_{n-1}^{b-1} \in \ini{I_\mA}$, so $x_{n-2}^{a} x_{n-1}^{b-1} \in I_u$, contradicting the minimality of $x_{n-2}^{a}x_{n-1}^b \in G(I_u)$. Hence, the {\it Claim} follows.

As a consequence of the {\it Claim}, one has a criterion the detect if an element of $\mB_2'$ belongs to $\mB_2$ or not. More precisely, let $\bx^\alpha \in \mB_2'$, then:
\[ \bx^\alpha \in \mB_2  \Longleftrightarrow \degs{\bx^\alpha} \in \mS_2 = E_\mS^{3,3} \]  

By Lemma~\ref{lemma:8}, one has that for all $\bx^\alpha \in \mB_2'$,
\[\degs{\bx^\alpha} \in E_\mS^{3,3} \Longleftrightarrow \degs{\bx^\alpha}-(\bfe_2+\bfe_3) \in \mS \, . \]
Therefore, the result follows from Proposition~\ref{prop:pertencesemig}.
\end{proof}

\begin{algorithm} 
\caption{Pruning algorithm for a simplicial toric ring of dimension $3$.}
\label{alg:pruning}
\begin{flushleft}
    \textbf{Input:} $I \subset R = \kx$ a simplicial toric ideal of dimension $3$ with variables in Noether position \\
    \textbf{Output:} The sets of monomials $\mB_0,\mB_1,\mB_2 \subset R$ that appear in the short resolution of $R/I$.
\end{flushleft}
\begin{algorithmic}[1]
\State $\mG$  $\gets$ reduced Gröbner basis of $I$ for the $\omega$-graded reverse lexicographic order.
\State $\mB_0, \mB_1', \mB_2'$ $\gets$ sets obtained in Algorithm~\ref{alg:dim3}.
\State $C \gets \{v\cdot x_{n-1}^b \in \mB_1' \mid v\in \mB_0 \text{ and } b\geq 2\}$.
\State $r_\alpha \gets$ remainder of $\bx^\alpha$ by $\mG$, $\forall \bx^\alpha \in \mB_1'$.
\State $C_1 \gets \{ \bx^\alpha \in C \mid x_{n-2} \text{ divides } r_\alpha \}$.
\State $C_2 \gets \{ \bx^\alpha \in C \mid x_{n-2} \text{ does not divide } r_\alpha \}$.
\State $\mB_1 \gets \left( \mB_1' \setminus C \right) \cup \{ \bx^\alpha \in C_1 \mid \frac{\bx^\alpha}{x_{n-1}} \notin I + \id{x_{n-2}} \} \cup \{\bx^\alpha \in C_2 \mid \frac{\bx^\alpha}{x_{n-1}} \notin I+\id{x_{n-2}} \text{ or } \frac{r_\alpha}{x_n} \notin I+\id{x_{n-2}}\}$.
\State $\mB_2 \gets \{\bx^\alpha \in \mB_2' \mid \frac{\bx^\alpha}{x_{n-1}} \in I+\id{x_n}\}$.
\end{algorithmic}
\end{algorithm}

Using Theorem~\ref{thm:prunB1} and Theorem~\ref{thm:prunB2}, one can obtain the set $\mB_2 \subset \mB_2'$ in the short resolution. The whole pruning algorithm (for $\mB_1'$ and $\mB_2'$) is summarized in Algorithm~\ref{alg:pruning}. It is worth pointing out that this algorithm requires only the computation the Gro\"bner basis $\mG$ of $I$ with respect to the $\omega$-graded reverse lexicographic order, to compute the remainders of several monomials modulo $\mG$, and to test membership of several monomials to the ideal $I + \id{x_{n-2}}$. Algorithm~\ref{alg:pruning} has been implemented in the function {\tt pruningDim3} of \cite{github_shortres}. \newline

The pruning algorithm presented in this section does not work if the ideal $I$ is not toric. If $I$ is not prime, it can happen that ${\rm pd}_A(R/I) = 3$, so the resolution has one more step and even Algorithm~\ref{alg:dim3} fails. If $I$ is prime but not binomial, Example~\ref{ex:prune_nontoric} shows that Algorithm~\ref{alg:pruning} can fail. 

\begin{example} \label{ex:prune_nontoric}
Set $R := \Q[x_1,\ldots,x_7]$, and let $I \subset R$ be the ideal 
\[I = \id{x_1+t_1^2t_2^2-t_1^3t_3, x_2-t_1^3t_2, x_3-t_2^3t_3, x_4-t_2t_3^3, x_5-t_1^3, x_6-t_2^3, x_7-t_3^3} \cap R \, .\]
The ideal $I$ is prime, $\dim(R/I) = 3$, and the variables are in Noether position. However, $I$ is not binomial, so it is not toric. Applying the results of Section~\ref{sec:groebner}, we obtain $|\mB_0| = 28$, $|\mB_1'| = 16$ and $|\mB_2'| = 4$. Moreover, this resolution is minimal since its Betti diagram, obtained by using the function {\tt shortRes} of \cite{github_shortres}, is 
    \begin{verbatim}
           0     1     2
------------------------
    0:     1     -     -
    1:     4     -     -
    2:     9     2     -
    3:    13    12     3
    4:     1     2     1
------------------------
total:    28    16     4
    \end{verbatim}
However, when applying Algorithm~\ref{alg:pruning} to the sets $\mB_0$, $\mB_1'$ and $\mB_2'$, one gets $\mB_1 = \mB_1'$ and $|\mB_2'\setminus \mB_2| = 1$, so the algorithm fails in this case.
\end{example}

\section{Dependence on the characteristic of $\k$}

In this last section, we present an example of a simplicial toric ring $R/I_\mA$ whose minimal graded free resolution depends on the characteristic of $\k$. Let $\mA \subset \N^6$ be the set defined by the column vectors of the following matrix
\[\left(\begin{array}{rrrrrrrrrrrrrrrr}
3 & 3 & 3 & 3 & 3 & 1 & 1 & 1 & 1 & 1 & 2 & 0 & 0 & 0 & 0 & 0 \\
3 & 3 & 1 & 1 & 1 & 3 & 3 & 3 & 1 & 1 & 0 & 2 & 0 & 0 & 0 & 0 \\
3 & 1 & 3 & 1 & 1 & 3 & 1 & 1 & 3 & 3 & 0 & 0 & 2 & 0 & 0 & 0 \\
1 & 1 & 3 & 3 & 1 & 1 & 3 & 3 & 3 & 1 & 0 & 0 & 0 & 2 & 0 & 0 \\
1 & 1 & 1 & 3 & 3 & 3 & 3 & 1 & 1 & 3 & 0 & 0 & 0 & 0 & 2 & 0 \\
1 & 3 & 1 & 1 & 3 & 1 & 1 & 3 & 3 & 3 & 0 & 0 & 0 & 0 & 0 & 2
\end{array}\right) \, ,\]
and consider the toric ideal $I \subset R = \k[x_1,\ldots,x_{16}]$ determined by $\mA$. Set $A = \k[x_{11},\ldots,x_{16}]$. Then, $I_\mA$ is \whom{} for $\omega = (6,\ldots,6,1,\ldots,1)$ and $A$ is a Noether normalization of $R/I$. \newline

We exhibit here the Betti diagrams of the
short resolution of $R/I$ when $\k$ is a field of characteristic $0$ and when its characteristic is $2$ (computed using the function {\tt shortRes} of \cite{github_shortres}). \newline

\begin{multicols}{2}
\begin{enumerate}
\item[] ${\rm char}(\k) = 0$
    \begin{Verbatim}[fontsize=\small]
           0     1     2
------------------------
    0:     1     -     -
    1:     -     -     -
    2:     -     -     -
    3:     -     -     -
    4:     -     -     -
    5:     -     -     -
    6:    10    15     6
------------------------
total:    11    15     6
    \end{Verbatim}

\columnbreak
\item[] ${\rm char} (\k) = 2$
    \begin{Verbatim}[fontsize=\small]
           0     1     2     3
------------------------------
    0:     1     -     -     -
    1:     -     -     -     -
    2:     -     -     -     -
    3:     -     -     -     -
    4:     -     -     -     -
    5:     -     -     -     -
    6:    10    15     6     1
    7:     -     -     1     -
------------------------------
total:    11    15     7     1
    \end{Verbatim}
\end{enumerate}
\end{multicols}

\medskip

The previous computation shows that the short resolution of a simplicial toric ideal may depend on the characteristic of $\k$ for simplicial toric ideals. Moreover, in our example, the projective dimension as $A$-module is different for both characteristics, ${\rm  pd}_A(R/I) = 2$ when ${\rm char}(\k) = 0$, while ${\rm  pd}_A(R/I) = 3$ when ${\rm char}(\k) = 2$. Since ${\rm pd}_R (R/I) = {\rm pd}_A(R/I) +n-d = {\rm pd}(R/I) +10$, then the resolution of $R/I$ as $R$-module also depends on the characteristic of $\k$. Moreover, the second step of the resolution also depends on the characteristic, as the following direct computation shows.

\newpage

\begin{multicols}{2}
\begin{enumerate}
\item[] ${\rm char}(\k) = 0$
    \begin{Verbatim}[fontsize=\small]
           0     1     2
------------------------
    0:     1     -     -
    1:     -     -     -
    2:     -     -     -
    3:     -     -     -
    4:     -     -     -
    5:     -     -     -
    6:     -    15     6
    7:     -     -     -
    8:     -     -     -
    9:     -     -     -
   10:     -     -     -
   11:     -    55   150
   12:     -     -     -
   13:     -     -     -
   14:     -     -     -
   15:     -     -     -
   16:     -     -   330
------------------------
total:     1    70   486
    \end{Verbatim}

\columnbreak
\item[] ${\rm char}(\k) = 2$
    \begin{Verbatim}[fontsize=\small]
           0     1     2
------------------------
    0:     1     -     -
    1:     -     -     -
    2:     -     -     -
    3:     -     -     -
    4:     -     -     -
    5:     -     -     -
    6:     -    15     6
    7:     -     -     1
    8:     -     -     -
    9:     -     -     -
   10:     -     -     -
   11:     -    55   150
   12:     -     -     -
   13:     -     -     -
   14:     -     -     -
   15:     -     -     -
   16:     -     -   330
------------------------
total:     1    70   487
    \end{Verbatim}
\end{enumerate}
\end{multicols}

\end{document}